\date{2017-01-11}
\title{\bf{Symmetric road interchanges}}
\author{Valentas Kurauskas and Ugn\.{e} \v{S}iurien\.{e}}
  \newenvironment{proof}{\noindent{\bf Proof\,}}{\hspace*{\fill}$\Box$}
  \newenvironment{proofof}[1]{%
  \noindent {\bf Proof of #1}}%
  {\hspace*{\fill}$\Box$}
\newcommand{\m}[1]{}
  \newtheorem{theorem}{Theorem}[section]
  \newtheorem{lemma} [theorem] {Lemma}%[section]
  \newtheorem{prop} [theorem] {Proposition}%[section]
  \newtheorem{claim} [theorem] {Claim}%[section]
\def\pr{{\mathbb P}\,}
\def\Lgeom{L_{C}^{*}}
\def\Lgeomt{\tilde{L}_{C}^{*}}
\def\Lcomb{L_{C}}
\def\ex{{\rm ex}}
\def\mul{{\rm mul}}
\def\solid{{\Rightarrow}}
\def\dotted{{\rightarrow}}
\def\leq{\leqslant}
\def\geq{\geqslant}
\def\Ga{\mathcal{M}_0}
\def\S{\mathcal{S}}
\def\B{\mathcal{B}}
\def\IG{{\rm Iso}}
\itshape\urlstyle{same}
\begin{document}

\maketitle

\begin{abstract}
%    In \cite{kurauskas2015} we studied the minimum genus of embeddings of the complete graphs $K_{n,n}$ that correspond
%    to complete road interchanges. Here we consider the minimum genus and geometric realisations of symmetric such embeddings.
A road interchange where $n$ roads meet and in which the drivers are not allowed to change lanes can be modelled as an embedding of a 2-coloured (hence bipartite) multigraph $G$ with equal-sized colour classes into an orientable surface such that there is a face bounded by a Hamiltonian cycle  \cite{kurauskas2015}.
The case of $G$ a complete bipartite graph $K_{n,n}$ corresponds to a complete $n$-way interchange where drivers approaching from each of $n$ directions can exit to any other direction. The genus of the underlying surface can be interpreted as the number of bridges in the interchange.

In this paper we study the minimum genus, or the minimum number of bridges, of %\emph{symmetric} such embeddings. We find the minimum genus for
a complete interchange with a restriction that it is symmetric under the cyclic permutation of its roads. We consider both (a) abstract combinatorial/topological symmetry, and (b) symmetry in the 3-dimensional Euclidean space $\mathbb{R}^3$. The proof of (a) is based on the classic voltage and transition graph constructions. For (b) we use, among other techniques, a simple new combinatorial lower bound. 
\end{abstract}

\bigskip

\section{Introduction}

We call a tuple $I=(G,H,\mathcal{M}, \S)$ a \emph{weaving-free interchange}, or simply \emph{an interchange}, 
if $\mathcal{M}$ is an embedding of a 2-coloured multigraph $G$ into an orientable surface $\S$ such that $G$ has the same number
of black and white vertices and the embedding $\mathcal{M}$ has a face with a directed Hamiltonian cycle $H$ as its boundary walk \cite{kurauskas2015}. 
Here pairs of subsequent white and black vertices on $H$ model the roadways (groups of lanes with traffic flow in the same direction) that enter and exit the interchange respectively. The images under $\mathcal{M}$ of the edges of $G$ model the actual lanes inside the interchange. Finally, the Hamiltonian cycle $H$ corresponds to the perimeter of the interchange and bounds the corresponding outer face.
$I$ is called \emph{$t$-way} if $G$ has $t$ black and $t$ white vertices. $I$ is \emph{complete} if there is an edge $uv$ for each white vertex $u$ and each black vertex $v$ (multiple edges are allowed). 

For even $n$, a construction for complete $n$-way interchanges with minimum genus was described in \cite{kurauskas2015}.
The problem is equivalent to minimizing the genus of an embedding of $K_{n,n}$ such that one face is bounded by a Hamiltonian cycle.
The solution implies one of the special cases of the conjecture on the minimum genus of a complete tripartite graph.
After several decades of partial progress, see \cite{ellinghamstephens2009, esz2006, ksz2004}, the complete proof of the conjecture has been announced very recently by Ellingham, Stephens and Zha \cite{esz2018}.

In this paper we impose an additional symmetry restriction. 
Symmetric embeddings or interchange layouts result by combining a number of identical pieces. This may be an advantage for aesthetical or practical reasons, especially when we have 3-dimensional realisations as in Theorem~\ref{thm.geometric.opt} below. 
%Additionaly we hope that methods used in understanding of the `easier' case of symmetric embeddings can contribute to solving the minimum genus problem of complete tripartite graphs, which is still open in general \cite{ellinghamstephens2009, esz2006, ksz2004}.

Our first result yields the minimum genus of interchanges where the cyclic permutation of roads (shifting vertices along the Hamiltonian cycle by two positions) preserves the rotation system. This kind of combinatorial/topological symmetry is classically modelled by the voltage graph construction; for definitions we follow Gross and Tucker \cite{grosstucker1987}, see also Section~\ref{sec.combinatorial}.
\begin{theorem}\label{thm.topsym}
    Let $n\ge 3$ be an integer.
    Let $\mathcal{M}$ be an embedding of $K_{n,n}$ derived from a loopless embedded voltage graph on two vertices
    and voltage group $\mathbb{Z}_n$. Suppose $\mathcal{M}$ has a face $F_H$ bounded by a Hamiltonian cycle.
    Let $p_1$ be the smallest prime divisor of $n$.
    The genus of $\mathcal{M}$ is at least $\Lcomb(n)$ where
    \small
    \begin{align*}
    \Lcomb(n) =  \begin{cases}
                         \frac{n(n-2)} 4, &\mbox{if $n$ is even}; \\
                         \lfloor \frac{n(n-1)} 4 \rfloor + 1 - \frac 1 2 \left( \frac n {p_1} + p_1\right), &\mbox{if } n \equiv 3\,(\bmod\,4), p_1 \ne n \mbox{ and } p_1^2 \nmid n; \\
                         \lfloor \frac{n(n-1)} 4 \rfloor + 1 - \frac 1 2 \left( \frac n {p_1} + 1\right), &\mbox{if } n \equiv 3\,(\bmod\,4)  \mbox{ and } p_1^2 \mid n; \\
                         \frac{n(n-1)} 4 - 1,   &\mbox{if }n \equiv 1\,(\bmod\,4), 3 \mid n \mbox { and } 9 \nmid n; \\
                         \lfloor \frac{n(n-1)} 4  \rfloor, &\mbox {otherwise}.
                 \end{cases}
    \end{align*}
    \normalsize
    
    Furthermore, this lower bound is best possible, and the derived embedding $\mathcal{M}$ that achieves genus $\Lcomb(n)$
    and $F_H$ 
    can be chosen so that $F_H$ is generated by a face of size 2 in the base embedding.
\end{theorem}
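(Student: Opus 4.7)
The plan is to convert the statement into an extremal problem via Euler's formula, extract the constraints forced by the voltage-graph lift, and then attack the lower bound and the matching construction separately.

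Setting up, the derived graph $K_{n,n}$ has $V=2n$ and $E=n^2$, so the genus of $\mathcal{M}$ satisfies $2g = n^2 - 2n + 2 - F$, where $F$ is the number of faces of $\mathcal{M}$. Minimising $g$ is therefore equivalent to maximising $F$. By standard voltage-graph theory, a base face of length $k$ whose net voltage has order $d$ in $\mathbb{Z}_n$ lifts to exactly $n/d$ faces of length $kd$. Writing the base faces as $(k_i,d_i,v_i)_{i=1}^{f}$, we obtain
\[
  F \;=\; \sum_{i=1}^{f} \frac{n}{d_i}, \qquad \sum_{i=1}^{f} k_i \;=\; 2n,
\]
and the Hamiltonian face $F_H$ arises from some index $\ast$ with $k_\ast d_\ast = 2n$.

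For the lower bound I would extract the constraints forced by the hypothesis that the derived graph is $K_{n,n}$: the $n$ edge voltages form a permutation of $\mathbb{Z}_n$; every $k_i$ is even with $k_i \ge 2$ (base loopless and bipartite); a digon ($k_i=2$) has a nonzero voltage of order at least $p_1$; a face with $d_i=1$ has voltage $0$ and satisfies $k_i\ge 4$ (so that the lifted face has length $\ge 4$); the face count $f$ has the same parity as $n$ (from Euler applied to the base); and $\sum_i v_i = 0 \in \mathbb{Z}_n$ because each edge's two darts contribute $+a_e$ and $-a_e$. The central combinatorial observation is that the digon voltages cannot all lie in a single proper subgroup $\langle n/p\rangle$ for a prime $p\mid n$, since the cyclic sequence of edge voltages in the planar base would then remain inside one coset of that subgroup, contradicting that the voltages exhaust $\mathbb{Z}_n$. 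Quantifying the unavoidable ``escape'' cost --- the mandatory digon of order $n$ (for $F_H$) together with additional higher-order faces forced by the coset argument and by the parity constraint on $f$ --- yields a sharp upper bound on $F$ whose exact value depends on $n\bmod 4$, on whether $p_1=n$, and on whether $p_1^2\mid n$; these arithmetic distinctions produce the five cases of $\Lcomb(n)$.

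For the matching upper bound I would exhibit an explicit voltage assignment attaining each bound, with $F_H$ the lift of a size-$2$ base face. When $n$ is even, the minimum-genus construction of \cite{kurauskas2015} is realisable as a $\mathbb{Z}_n$-voltage lift of a planar base of digons in which one digon carries a generator of $\mathbb{Z}_n$, attaining $n(n-2)/4$. For $n$ odd, after placing a single Hamiltonian digon with coprime voltage, the remaining base faces are chosen as length-$4$ zero-voltage faces (each contributing $n$) wherever the voltage-sum constraint permits, and filled out by digons of the smallest admissible nonzero order $p_1$ (and, when $p_1^2\mid n$, of order $p_1^2$); the different admissible configurations correspond to the five cases. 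The main obstacle I expect is establishing sharpness of the lower bound in the intermediate cases $n\equiv 3\pmod 4$ with $p_1\ne n$: here the fine residue condition from $\sum v_i = 0$ interacts with the decomposition of $\mathbb{Z}_n$ into cosets of $\langle n/p_1\rangle$, and the distinction between $p_1^2 \mid n$ and $p_1^2\nmid n$ accounts for a single-unit shift in the bound, which I expect will require a dedicated arithmetic argument on the multiset of face voltages.
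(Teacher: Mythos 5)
Your proposal follows the same skeleton as the paper's argument: Euler's formula converts genus minimisation into maximising the number of faces in the lift; the base-face data $(k_i,d_i,v_i)$ is governed by $\sum_i k_i = 2n$ and $\sum_i v_i \equiv 0 \pmod n$; and the final bound is an arithmetic optimisation over the orders of the net voltages, producing the five cases of $\Lcomb(n)$. The one step where your reasoning diverges from the paper's and is not obviously correct is the derivation of the coprimality constraint. You argue that the digon voltages cannot all lie in a proper subgroup $\langle n/p\rangle$ ``since the cyclic sequence of edge voltages in the planar base would then remain inside one coset,'' but this does not work as stated: between two digons along the rotation there will in general be size-$4$ faces of net voltage $0$, and such a face on the dipole $D_n$ relates $\alpha(e_a)-\alpha(e_b)+\alpha(e_c)-\alpha(e_d)=0$, which does not force consecutive edge voltages into the same coset; moreover the base need not be planar. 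The paper instead uses the constraint you listed but did not exploit, namely $\sum_i v_i \equiv 0$: normalising the Hamiltonian digon's voltage to $1$ and writing $g_1,\dots,g_l$ for the net voltages of the remaining non-optimal faces, one gets $1 + g_1 + \dots + g_l = 0$, and since each $g_j$ is a multiple of $n/|g_j|$, this forces $\gcd(n/|g_1|,\dots,n/|g_l|)=1$. That coprimality identity is the engine behind the $a(p,q)=n/p+n/q$ optimisation that produces the $p_1$ and $p_1^2\mid n$ distinctions. Two further devices you will need but did not anticipate: for odd $n$ the paper works with the excess $\sum_i(s_i-4)$ and a mod-$8$ congruence on it, which is what cleanly separates $n\equiv1$ from $n\equiv3 \pmod4$; and the sharp constructions for $n\equiv3\pmod4$ are produced via a non-constructive counting argument (Lemma~\ref{lem.opt_g1g2}) rather than explicit voltage assignments. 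So the approach is right, but the coprimality step as written has a gap and the case analysis is more delicate than your sketch suggests.
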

The fact that $\Lcomb(n)$ depends on the prime divisors of $n$ may be seen as a remote link to the well established theory of regular embeddings of complete bipartite graphs $K_{n,n}$ where the
symmetry requirement is much stronger \cite{jones2010}.

%
%The fact that $\Lcomb(n)$ depends on the prime divisors of $n$ suggests a remote connection
%with a well established theory of regular embeddings of complete bipartite graphs $K_{n,n}$, where the
%symmetry requirement is much stronger \cite{jones2010}.
%A slightly surprising fact is that $\Lcomb(n)$ depends on its prime factors. In contrast, the proven
%or conjectured genus of various dense graph classes (complete graphs, complete bipartite graphs, complete tripartite graphs, etc)
%is a quadratic function in $n$ that depends only on $n \bmod \, k$ for some small number $k$.

%It is not always
%possible to preserve the symmetries of an embedding if one wants to 
%realise a surface with an embedded graph in $\mathbb{R}^3$.
%Therefore the next natural question we consider is the minimum genus of an interchange
%with a given symmetry type in $\mathbb{R}^3$.
% In this paper we focus on embeddings that have n-fold rotational symmetry. We will see below that for 3-dimensional road intersections, this is the largest order symmetry group.

Now let $\mathcal{M}$ be an embedding of a graph $G$ into a closed connected orientable surface $\S$ where $\S$ is itself
embedded into $\mathbb{R}^3$ (we will denote the latter fact by $\S \subset \mathbb{R}^3$).
For our second result we will work in the category of piecewise linear embeddings both for the graph in the surface, and the surface in the 3-dimensional space.
%We call $\mathcal{M}$ a 3-dimensional
%embedding of $G$ into $\S$ and denote it by $(G, \mathcal{M}, \S)$.
The \emph{image} $\mathcal{M}(G)$ of $G$ is defined as the subset of $\S$ which consists of the union of the set $\mathcal{M}(V(G))$ of all points  that $\mathcal{M}$ maps $V(G)$ to and 
the curves connecting these points which $\mathcal{M}$ maps the edges of $G$ to.
The \emph{isometry group} $\IG(\mathcal{M})$ of $\mathcal{M}$ %$(G, \mathcal{M}, \S)$ \m{introduce/find a name for triplet} %is the automorphism group of $(S, \mathcal{M}(G))$, which
consists of all Euclidean isometries $f: \mathbb{R}^3 \to \mathbb{R}^3$ which map $\S$ to $\S$,
%\m{isometry $implies$ orientation preserving?}
 $\mathcal{M}(G)$ to $\mathcal{M}(G)$ and $\mathcal{M}(V(G))$ to $\mathcal{M}(V(G))$. 
%{\color{orange} We say that $\mathcal{M}$ is \emph{piecewise smooth} if $\S$ can be triangulated so that
%each triangle is a differentiable manifold, the edges of each triangle are smooth, the vertices of $G$ coincide
%with the vertices of triangles, and the edges of $G$ are unions of edges
%of triangles.}
We say that an embedding $\mathcal{M}$ has
%{\color{orange} A subgroup of Euclidean isometries is called 
\emph{$n$-fold rotational symmetry} if \emph{both} the embedded graph and the underlying surface are invariant under the rotation $r$ by angle $2 \pi /n$ about some axis in $\mathbb{R}^3$, i.e., if $r \in \IG(\mathcal{M})$.
We call a road interchange $I=(G,H,\mathcal{M}, \S)$ \emph{3-dimensional} if $\S \subset \mathbb{R}^3$.
%and $\mathcal{M}$ is piecewise linear.
 \emph{The symmetry group $\IG(I)$ of $I$} consists of those $f \in \IG(\mathcal{M})$
that fix the face bounded by $H$. 
%%If each element of $\IG(I)$ preserves the colour of $G$, we call $\IG(I)$ \emph{colour preserving}.

%We will consider graph embeddings into surfaces $\S \subset \mathbb{R}^3$
%that are \emph{piecewise linear}, that is, $\S$ is a union of a finite number of flat triangles 
%and each embedded edge consists of a union of a finite number of line segments. 
%Our results almost trivially carry out to the piecewise differentiable setting.

%\hyphenation{dimen-sional}
The next is our main result about 3-dimensional road interchanges.
%Recall that $C_n$ denotes the cyclic symmetry group of order $n$.
\begin{theorem} \label{thm.geometric.opt}
    Let $n \ge 2$ be an integer. 
    Let $\mathcal{M}$ be a piecewise linear embedding of $K_{n,n}$ into a closed connected orientable surface $\S \subset \mathbb{R}^3$ such that there is a face $F_H$ bounded by a Hamiltonian cycle. 
    Suppose $\mathcal{M}$ has n-fold rotational symmetry that leaves the boundary of $F_H$ invariant.
    %If $\mathcal{M}$ is 2-cell
    Then the genus of $\S$ is at least $\Lgeom(n)$, where %\m{Is $n \lfloor \frac n 4 \rfloor-1_{n \equiv 3 (\bmod 4)}$ nicer?}
    \begin{align*}
    \Lgeom(n)=\begin{cases}
        \frac {n^2} 4 - 1, &\mbox{if }  n\equiv0\,(\bmod \, 4);  \\
        \frac {n(n-1)} 4, &\mbox{if }  n\equiv1\,(\bmod \, 4); \\
        \frac {n (n-2)} 4, &\mbox{if }  n\equiv2\,(\bmod \, 4);  \\
        \frac {n (n+1)} 4 - 1, &\mbox{if }  n\equiv3\,(\bmod \, 4).
    \end{cases}
    \end{align*}
    Furthermore, if $n \ne 4$, this lower bound is best possible.
\end{theorem}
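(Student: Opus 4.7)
Since $r$ has order $n$ on the $2n$ vertices of $H$ and fixes $F_H$ setwise, after relabelling the roads we may assume $r$ shifts vertex indices by $1$, so the action of $G := \langle r \rangle$ on both the vertices and edges of $K_{n,n}$ is free. Geometrically the axis of $r$ meets the closed orientable surface $\S \subset \mathbb{R}^3$ transversally in an even number of points (a line crosses a closed surface bounding a region in $\mathbb{R}^3$ an even number of times), each lying in the interior of a face whose stabiliser is all of $G$; by the classical fact that a finite-order orientation-preserving homeomorphism of a $2$-disc is conjugate to a rotation, each such face contains exactly one axis point. A Lefschetz fixed-point argument on the disc $F_H$ ensures that $F_H$ is one of these faces, so denoting by $a_n$ the number of stabiliser-$G$ face orbits we have $a_n \ge 2$ and $a_n$ is even.

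\textbf{Lower bound.} For each divisor $d \mid n$, let $a_d$ be the number of face orbits with stabiliser of order $d$; let $N = \sum_d a_d \cdot (n/d)$ be the total face count. Because $G$ acts freely on edges, each face of stabiliser order $d$ has boundary length divisible by $d$; bipartiteness together with the colour-preservation of $r$ on boundary vertices strengthens this to a multiple of $2d$ when $d \ge 2$, and to at least $4$ when $d = 1$. Plugging these lower bounds into the face-length identity $\sum_{F} |F| = 2n^2$ produces the \emph{simple combinatorial inequality}
\[
    \sum_{d \ge 2} a_d \;+\; 2\,a_1 \;\le\; n.
\]
Combined with Euler's formula $N = n^2 - 2n + 2 - 2g$ and the forced parity $N \equiv n \pmod 2$, a short optimisation of $N$ under $a_n$ even $\ge 2$ already yields $g \ge \Lgeom(n)$ when $n \not\equiv 0 \pmod 4$ (for $n \equiv 3 \pmod 4$ the parity of $N$ forces $a_n \ge 4$, costing two additional budget units). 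For $n \equiv 0 \pmod 4$ the combinatorial bound alone is insufficient, and one additionally invokes the Riemann--Hurwitz formula for the branched cover $\S \to \S/G$,
\[
    2g - 2 \;=\; n(2g' - 2) + a_n (n - 1), \qquad g' \ge 0,
\]
to rule out those small values of $g$ admitted by the inequality but incompatible with any integer pair $(g', a_n)$; this is precisely what pushes $g$ from $2$ up to $3$ in the critical case $n = 4$.

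\textbf{Construction.} For $n \ne 4$ an embedding with $g = \Lgeom(n)$ is produced via a voltage-graph construction in the style of Theorem~\ref{thm.topsym}: one picks a base embedded voltage graph on two vertices with voltages in $\mathbb{Z}_n$ whose derived embedding of $K_{n,n}$ has a Hamiltonian face lifted from a $2$-face in the base and whose face profile matches the equality case of the analysis above. The resulting combinatorial $\mathbb{Z}_n$-symmetric embedding is then realised in $\mathbb{R}^3$ with literal rotational symmetry by placing $F_H$ as the outer face of a regular $2n$-gonal disc centred on the axis of $r$ and attaching the remaining faces and handles in $n$ congruent rotationally symmetric sectors around that axis. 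The principal obstacle is the delicate case-by-case interplay of the combinatorial inequality, parity, and Riemann--Hurwitz for $n \equiv 0 \pmod 4$, together with the verification that each residue class of $n \bmod 4$ admits an explicit voltage graph whose derived embedding can indeed be realised with Euclidean rotational symmetry in $\mathbb{R}^3$ rather than only with combinatorial $\mathbb{Z}_n$-symmetry.
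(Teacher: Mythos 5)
Your lower-bound route is genuinely different from the paper's (which passes to the quotient surface via Proposition~\ref{prop.cuts} and counts cut-arc endpoints inside $2$-regions in Lemma~\ref{lem.geometric_lower}); you count face orbits on $\S$ by stabiliser order, and once repaired this would be a valid alternative. As written, however, it has a gap: you never rule out face orbits with stabiliser $\mathbb{Z}_d$ for $1<d<n$, and your budget inequality admits them. They must be excluded, and the exclusion is geometric, not combinatorial: if $\operatorname{stab}(F)=\mathbb{Z}_d$ with $d>1$, then $r^{n/d}|_{\overline F}$ is a nontrivial finite-order orientation-preserving homeomorphism with a fixed point, which (by freeness of the $G$-action on vertices and edges) lies in the interior of $F$, hence on the axis, hence is fixed by all of $G$, forcing $\operatorname{stab}(F)=G$; thus $a_d=0$ for $1<d<n$. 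Without this your optimisation fails. For $n=15$, take $a_{15}=2$, $a_3=1$, $a_1=6$: the budget $(=15)$, the parity ($\sum_d a_d=9$ odd) and even the appropriate Riemann--Hurwitz identity are all satisfied, yet $N=97$ and $g=50<59=\Lgeom(15)$. A smaller slip: for $n\equiv3\pmod 4$, parity forces $\sum_d a_d$ odd, i.e.\ $a_1$ odd, not $a_n\ge4$; the bound $\Lgeom(n)$ is then attained at $a_n=4$, whereas $a_n=2$ forces the strictly larger $g\ge n(n+1)/4$.

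The construction is the more serious gap. ``Attach the remaining faces and handles in $n$ congruent rotationally symmetric sectors'' is not an argument; it is exactly the difficulty. Since $\Lcomb(n)<\Lgeom(n)$ for most $n$, combinatorial $\mathbb{Z}_n$-symmetry does not imply Euclidean realisability, so one cannot start from an arbitrary Theorem~\ref{thm.topsym} voltage graph and expect to place it in $\mathbb{R}^3$; moreover the face profile forced by your budget analysis (all nontrivial stabilisers equal to $G$) is generally not that of a minimum-genus combinatorial embedding. You name this realisability problem ``the principal obstacle'' but do not address it. The paper resolves it by building explicit piecewise-linear ring-road surfaces and embeddings block by block (Lemma~\ref{lem.geomupper}, Section~\ref{sec.geom.upper}), with dedicated modifications for $n\equiv0,3\pmod4$ and the separate analysis of the exceptional case $n=4$ (Lemma~\ref{lem.geometric.4}); nothing in your proposal replaces that work.
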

%We note that using Euler's formula is not sufficient to prove the lower bound here.
Of course, $\Lcomb(n) \le \Lgeom(n)$, but note that equality holds only in certain cases.
A simple, but suboptimal lower bound for $\Lgeom(n)$ can be obtained from Theorem~\ref{thm.topsym} and the fact that not every orientable surface can be
embedded in $\mathbb{R}^3$ with a given symmetry group: in $\mathbb{R}^3$ we can only
have embeddings that have symmetry groups of a $t$-prism for certain $t > 0$, the Platonic solids and their subgroups  \cite{undine, tucker2014}.
The correct lower bound follows from a new combinatorial argument that exposes a local obstruction that prevents a corresponding `quotient embedding' into a smaller genus surface, see Section~\ref{sec.geom.lower2}. 

Our constructions that achieve genus $\Lgeom(n)$ can be called \emph{ring road interchanges}. For
each entering motorway we construct a block with some `simple bridges' (handles). We connect the blocks into a ring.
For $n \bmod 4 \in \{1,2\}$ the layout is particularly simple: in the inter-block connections the traffic always moves counter-clockwise and the lanes heading to each exit motorway are adjacent. For $n \bmod 4 \in \{0, 3\}$ and $n > 4$, the optimal constructions are similar, but they use a star-shaped bridge and some lanes where traffic between the blocks goes in the opposite, clockwise, direction, see Section~\ref{sec.geom.upper}.

%%Can a 3-dimensional $n$-way interchange $I= (G,H,\mathcal{M}, \S)$ have a symmetry group $A(I)$ of order larger than $n$?
%%The possible symmetry groups, under Euclidean isometry, for a \m{(closed, connected??)} surface $\S$ of genus $g$ embedded in $\mathbb{R}^3$ are the symmetry groups of the k-prism for certain $k$, the Platonic solids and their subgroups \cite{tucker2014}. Tucker \cite{tucker2014} determined for which $g$ a surface $\S$ can be embedded in $\mathbb{R}^3$ to have one of the above symmetry groups. 
%%%Only the n-prism symmetry groups can have arbitrarily large order. 
%%%However 
%%\m{isomorphic?} 
%%{\color{orange} A simple geometric argument shows that
%%since $A(I)$ fixes the face bounded by $H$, it must be
%%isomorphic to a subgroup of the dihedral group $Dih_{2n}$. %Furthermore, if $A(i)$ is isomorphic to $C_{2n}$ or the dihedral group $D_{2n}$,
%%If we furthermore ask that $A(I)$ is colour preserving (incoming lanes are always mapped to incoming lanes and outgoing lanes are mapped to outgoing lanes), then it must have $n$-fold rotational symmetry.}

The case $n=4$ is an exception in Theorem~\ref{thm.geometric.opt}. We show below that the minimum genus in this case is $4$, and one of the embeddings of such genus is the 4-way Pinavia interchange \cite{jbk2010}, shown in Figure~\ref{fig.pinavia_embedding} below.

The Hamiltonian cycle assumption can be relaxed in Theorem~\ref{thm.geometric.opt}. The result remains true if we only require that no vertex of $K_{n,n}$ is mapped to a fixed point and at least one point of the surface is a fixed point, see Lemma~\ref{lem.geometric.opt} below.

%Limitations: how practical is minimizing the genus? The radius problem. The star-shaped bridges.

\section{Definitions}

A directed multigraph $G$ is a pair $(V, E)$, where $V$ and $E$ are disjoint sets
together with a map $E \to V \times V$ assigning to each edge $e$ its endpoints $(u,v)$;
we say that $e$ goes from $u$ to $v$. We use $V(G)=V$ and $E(G) = E$.
When there is only one edge $e$ with endpoints $(u,v)$ we use notation $uv$ for $e$.
Below when necessary we consider any 2-coloured bipartite multigraph $G$ as directed,
by implicitly assuming all edges go from white vertices to black vertices.

For a directed multigraph $G$ we let $U(G)$ denote the underlying undirected multigraph.
We say that a directed multigraph $G_1$ and an undirected multigraph $G_2$ are \emph{isomorphic}
if $G_2$ is isomorphic to $U(G_1)$. %\m{where do I use?}

%We will use notation $\mathcal{M}: G \to \S$ for an embedding $\mathcal{M}$ 
Let $\mathcal{M}$ be an embedding
of a directed or undirected multigraph $G$ into a surface $\S$. We call a connected component of $\S \setminus \mathcal{M}(G)$ a \emph{region}.  If a region of $\mathcal{M}$ is 2-cell, i.e. if it is homeomorphic to a disk, we call it a \emph{face} of $\mathcal{M}$. A well known fact is that there is a bijection between 2-cell embeddings of $G$ (up to homeomorphism) and \emph{rotation systems} for $G$, which are sets $\{ \pi_v: v \in V(G)\}$ where $\pi_v$ is a cyclic ordering of edges incident to $v$ \cite{grosstucker1987}. 

Let $\Gamma=(\Gamma, \star)$ be a group.
Given a directed multigraph $G$ and a function $\alpha: E(G) \to \Gamma$, called a \emph{voltage function} or a \emph{voltage assignment},
the \emph{derived} graph $G'$ for voltage group $\Gamma$ is the directed multigraph $G'$ on the vertex set $V(G) \times \Gamma$ and edges 
$\{ ( (x,a), (y,a \star \alpha(e))) : e \in E(G),\, f(e) = (x,y)\}$. Here $f$ maps $e$ to its endpoints. $(G, \alpha)$ is
called the \emph{voltage graph} or the \emph{base graph} for $G'$, and we call $\Gamma$ the \emph{voltage group}. If $G'$ is derived from $(G, \alpha)$ and $\Gamma$, we will say that its undirected
version $U(G')$ also is.
%This is a specific case of the voltage graph construction, see \cite{grosstucker1987}.
If $\mathcal{M}$ is a 2-cell embedding of $G$ into some surface $\S$
then $(G, \mathcal{M}, \S, \alpha)$ is called an \emph{embedded voltage graph}.

The main point of the \emph{voltage graph} construction is that the embedding $\mathcal{M}$ of a base graph $G$ can be used to obtain a \emph{derived embedding} $\mathcal{M}'$ of the derived graph $G'$ \cite{grosstucker1987}.
Each face of $\mathcal{M}'$ is generated by precisely one face of $\mathcal{M}$.
Let $F$ be a face of $\mathcal{M}$ with boundary $e_1^{s_1} \dots e_k^{s_k}$.
Here $s_i = 1$ if the direction of $e_i$ agrees with the direction of the boundary walk of $F$ and $s_i=-1$ otherwise. 
The \emph{net voltage} of $F$ is $\alpha(e_1)^{s_1} \star \dots \star \alpha(e_k)^{s_k}$, where $x^{-1}$ denotes the inverse of $x$ in $\Gamma$.
We call the number of edges in the boundary walk of $F$ its \emph{size}. We call a face \emph{Hamiltonian} if its boundary is a Hamiltonian cycle. If $F$ has size $k$ and net voltage $g$, then there 
are $n/|g|$ faces in $\mathcal{M}'$ \emph{generated by} $F$, and each of these faces has size $k |g|$ \cite{grosstucker1987}. Here $|g|$ 
is the order of $g$ in $\Gamma$.
The main base graph we will meet in this paper is the dipole graph $D_n$, that is, the bipartite multigraph on two vertices $\{v_w, v_b\}$ and $n$ parallel directed edges from $v_w$ to $v_b$. Here $v_w$ and $v_b$ stand for `white' and `black' vertex respectively,
as we sometimes treat $D_n$ as 2-coloured. The only group $\Gamma$ we will meet below will be  $\mathbb{Z}_n$ (the cyclic group of order $n$).

Embedded voltage graphs obtained from $D_n$ can be alternatively
represented by transition graphs \cite{esz2006, ellinghamstephens2009, ellingham2014}.
For our purposes, a \emph{transition graph} of order $n$ is a directed (multi-)graph with vertex set $\mathbb{Z}_n$
and edge set consisting of the union of the edges
of two directed Hamiltonian cycles $C_1$ and $C_2$. 
%$C_1$ and $C_2$ encode the
%rotation at the white vertex and therotation at the black vertex of $D_n$ respectively and
%the vertex label on a cycle corresponds to the voltage of an edge.
 The
edges of $C_1$ are called \emph{solid} edges and the edges of $C_2$ are called
\emph{dotted} edges of $G$. An edge from $u$ to $v$ is denoted $u \solid v$ if it is solid,
and $u \dotted v$ if it is dotted.
The edges of $G$ are a disjoint union of edges of simple directed even cycles (i.e. without
repeated vertices) in $G$ with alternating edge type.
The \emph{net transition} $\alpha(C)$ of one such \emph{alternating cycle} (called \emph{boundary walk} in \cite{esz2006}) $C=(v_1, \dots, v_k)$ with edges $v_1 \solid v_2 \dotted v_3 \solid \dots v_k \dotted v_1$ 
is %an element of $\mathbb{Z}_n$ 
%given by
$\alpha(C) = - v_1 + v_2 - v_3  \dots  + v_k$.
%The transition graph is an alternative, more visual representation
%of a voltage graph on two vertices.

\section{Proofs for combinatorial embeddings}
\label{sec.combinatorial}

In this section we prove Theorem~\ref{thm.topsym}.
%
%
%A pair of bijections $(\sigma_1, \sigma_2)$, $\sigma_1: V(G) \to V(G)$ and $\sigma_2: E(G) \to E(G)$ is called an \emph{automorphism} for a 2-cell embedded directed multigraph $(G, \mathcal{M}, \S)$ if the rotation at $\sigma_1(v)$ is $(\sigma_2(e_{v,1}), \dots, \sigma_2(e_{v,k_v}))$ if and only if the rotation at $v$ is $(e_{v,1}, \dots, e_{v,k_v})$ for each $v \in V(G)$ and the endpoint map $f$ satisfies $f(e) = xy$ if and only if $\sigma_2(f(e)) = \sigma_1(x)\sigma_1(y)$. As is common, below we use the same symbol $\sigma$ for both $\sigma_1$ and $\sigma_2$.
%
Let $I=(G, H, \mathcal{M}, \S)$ be a complete $n$-way interchange.
%We can assume $H$ is directed by identifying it with the boundary walk of the outer face it bounds.
Write $H = (v_0, \dots,$  $v_{2n-1})$.
 Suppose
\begin{itemize}
 \item[($\star$)] $\mathcal{M}$ has an orientation preserving automorphism $\sigma$ that maps $v_i$ to $v_{i+2}$ for each $i \in \mathbb{Z}_n$.\label{prop.star}
\end{itemize}
 Then $\mathcal{M}$ is derived from an embedded voltage graph with two vertices and $m$ parallel edges, $m \ge n$, voltage group $\Gamma = {Z}_n$  and a voltage assignment $\alpha$ such that for each $x \in \mathbb{Z}_n$ there is $e \in E(B)$ with $\alpha(e) = x$, i.e. $\alpha$ is a surjection. It follows from ($\star$) that the base embedding can be chosen so that the face bounded by $H$ is generated by a face $F_0$ of size two.
We can remove edges with repetitive voltage that do not lie on $F_0$ without increasing the genus until there are exactly $n$ edges
of different voltage left. So for the minimum genus problem, we may assume $G$ is $K_{n,n}$ and $m=n$. Conversely, each graph derived from $D_n$ with $\Gamma=\mathbb{Z}_n$ and a bijective voltage assignment $\alpha: E(D_n) \to \mathbb{Z}_n$ such that at least one face of the derived graph is bounded by a Hamiltonian cycle and generated by a face of size 2 has property ($\star$).

\subsection{Even \texorpdfstring{$n$}{Lg}}

\begin{proofof}{Theorem~\ref{thm.topsym}, even $n$. }
Let $(D_n, \Ga, \S_0, \alpha)$ be the base embedded voltage graph that yields $\mathcal{M}$.
Let $F_0, F_1, \dots, F_k$ denote the faces of the $\Ga$ and for each face $F_i$ let $k_i$ and $g_i$ denote the size and net voltage of $F_i$.
%(we will consider $\Z_n$ elements as remainders modulo $n$)
Consider the sums $\sum _{i=0}^k k_i$ and $\sum _{i=0}^k g_i$ : in both sums each edge is taken into account twice (once in both directions), therefore
\begin{align}
\sum_{i=0}^k k_i=2n  \quad\mbox{and}\quad \sum_{i=0}^k g_i \equiv 0 \pmod{n}. \label{voltages}
\end{align}
For every $i=0,1,\dots, k$ the face $F_i$ generates $f_i=\frac{n}{|g_i|}$ faces in the derived graph, each of which has 
size
%a facial boundary of a length 
$k_i \cdot |g_i|$.
%We note that
%\begin{align*}
%f_i = \begin{cases}
%\gcd(n, g_i) &\text{ if } g_i \neq 0 \\
%n &\text{ if } g_i = 0
%\end{cases}
%\end{align*}
%Without loss of generality we can assume that $F_0$ generates Hamiltonian faces in the derived graph embedding.
In order to obtain the lower bound for the genus of this embedding, we will seek to maximize the sum $f=\sum_{i=0}^k f_i$.
Fix a face $F_0$ of $\Ga$ that generates Hamiltonian faces in $\mathcal{M}$.
We call a face $F$, $F \ne F_0$ of $\Ga$ \emph{optimal} if it generates quadrangular faces in $\mathcal{M}$. All other faces $F$, $F \ne F_0$
will be called non-optimal. Without loss of generality we assume that
$F_1, \dots, F_l$ are non-optimal and $F_{l+1}, \dots, F_k$ are optimal.

$F_i$ is optimal iff $k_i \cdot |g_i|=4$. So we can split the optimal faces $F_i$ into two cases: %In our case $k_i$ is always an even number, so we split such faces $F_i, \, i \geq 1$ into two cases:
\begin{enumerate}[(1)]
\item $g_i \equiv 0$ ($f_i=n$) and $k_i=4$;
\item $g_i \equiv \frac{n}{2}$ ($f_i=\frac{n}{2}$) and $k_i=2$.
\end{enumerate}
%All other faces $F_i, \, i \geq 1$ will be called \emph{non-optimal}. 
%We denote non-optimal faces by $F'_1, F'_2, \dots, F'_l$ and lengths and voltages of their facial boundaries by $n'_1, n'_2, \dots, n'_l$ and $g'_1, g'_2, \dots, g'_l$ respectively. 
We denote $\sum_{j=1}^l k_j=2m$. For non-optimal faces $F_j$ we have $k_j \cdot |g_j| \geq 6$, hence we can split them into three cases:
\begin{enumerate}[1)]
\item $|g_j|=1, \, k_j \geq 6$ and $f_j=n \leq k_j \cdot \frac{n}{6}$;
\item $|g_j|=2, \, k_j \geq 4$ and $f_j=\frac{n}{2} \leq k_j \cdot \frac{n}{8}$;
%\item $\gcd(n, g'_j)< \frac{n}{2}$ and $f'_j \leq \frac{n}{3} \leq n'_j \cdot \frac{n}{6}$.
\item $|g_j| > 2$ and $f_j \leq \frac{n}{3} \leq k_j \cdot \frac{n}{6}$.
\end{enumerate}
Therefore
\begin{align}
\sum_{j=1}^l f_j \leq \sum_{j=1}^l k_j \cdot \frac{n}{6}=\frac{mn}{3}. \label{onethird}
\end{align}

Now we rewrite \eqref{voltages} as follows:
\begin{align}
0 \equiv \sum_{i=0}^k g_i &\equiv g_0 + \sum_{j=1}^l g_j + \frac{2n-k_0-2m}{2} \cdot \frac{n}{2} \nonumber \\ 
& \equiv \begin{cases}
g_0 + \sum_{j=1}^l g_j &\text{ if } k_0/2+m \text{ is even},  \\
g_0 + \sum_{j=1}^l g_j+\frac{n}{2} &\text{ if } k_0/2+m \text{ is odd}.
\end{cases} \label{hamvolt}
\end{align}
We can also rewrite the sum $f$:
\begin{align}
f=\sum_{i=0}^k f_i = f_0 + \sum_{j=1}^l f_j + \frac{2n-k_0-2m}{2} \cdot \frac{n}{2}. \label{faces}
\end{align}

First assume that $k_0=2$. Then $|g_0|=n$, $f_0=1$ and from \eqref{hamvolt} we have that $g_0$ cannot equal to 0 or $\frac{n}{2}$ when $n>2$, hence there is at least one non-optimal face in $\Ga$ (i.e. $l>0$). Let $\hat{\Ga}$ be an embedded voltage graph which gives the lowest derived graph embedding genus among all graphs $\Ga$ satisfying theorem conditions with $k_0=2$. 

Now for even $n$ it is easy to get a transition graph that yields an embedding of $K_{n,n}$ with two Hamiltonian
faces and optimal genus $n (n-2)/4$. This can be done
by generalizing examples of Ellingham \cite{ellingham2014}. %\m{ar gerai?}
In particular, we can take the cycles $(C_1, C_2)$ where $C_1$ is
\[
\left(\frac n 2, \frac n 2 -1, \dots, 1, \frac n 2 + 1, \frac n 2 + 2, \dots, n-1, 0\right)
\]
and $C_2$ is
\[ 
\begin{cases}
    \left(1, 2, -2, 4, -4, \dots, - \left(\frac n 2 -1\right), 0, -1, 3, -3, 5, -5, \dots, \frac n 2\right) &\mbox{if $n \equiv 2 \,(\bmod\,4)$}, \\ 
    \left(1, 2, -2, 4, -4, \dots, \frac n 2, 0, -1, 3, -3, 5, -5, \dots,  - \left(\frac n 2 -1\right) \right) &\mbox{if $n \equiv 0 \,(\bmod\,4)$}.  
\end{cases}
\]
Note that in each case the two Hamiltonian faces are generated by two alternating 2-cycles, or 2-faces in the corresponding voltage graph: they are $(2,1)$ and $(-1, 0)$.

%Then from construction with two Hamiltonian faces ref{??} we have a lower boundary
From this construction we have a lower bound  $\hat{f}$  for the number of faces in the embedding derived from $\hat{\Ga}$:
\begin{align}
\hat{f}= 1+\sum_{j=1}^l f_j+\frac{n^2-n-mn}{2} \geq 1+1+\frac{n^2-n-n}{2} \label{upper}
\end{align}

It suffices to prove that in \eqref{upper} the equality holds. From \eqref{upper} and \eqref{onethird}
\begin{align*}
1+\frac{(m-1)n}{2} &\leq \sum_{j=1}^l f_j \leq \frac{mn}{3}, \mbox{ therefore}\\  6 & \leq n(3-m).
\end{align*}

As $m>0$, $m$ equals to 1 or 2. If $m=1$, then $l=1$ and from \eqref{hamvolt} we have that $g_1 \equiv -g_0 \pmod n$ so $f_1=1$ and the equality in \eqref{upper} holds.

Similarly, if $m=2$ and $l=1$, then  $1+\frac{n}{2} \leq \sum_{j=1}^l f_j=1$, a contradiction.
%Yet, if we let $l=1$, then $1+\frac{n}{2} \leq f_1 \leq \frac{n}{2}$, contradiction.

We are left with the case when $m=2$ and $l=2$. We have that $k_1=k_2=2$ and $f_j \leq \frac{n}{3}$ for $j=1, \, 2$.
As $1+\frac{n}{2} \leq f_1+f_2$, one of $f_1$ and $f_2$ must equal to $\frac{n}{3}$ and the other one belongs to $\{\frac{n}{3}, \frac{n}{4}, \frac{n}{5}\}$. Without loss of generality let $f_1=\frac{n}{3}$.

If $\gcd(\frac{n}{2}, f_1, f_2)=r>1$, then from \eqref{faces} we get $\gcd (n, f_0) \geq r >1$, a contradiction. Therefore $\gcd(\frac{n}{2}, f_1, f_2)=1$ and when $f_2$ equals to $\frac{n}{3}$,  $\frac{n}{4}$ or $\frac{n}{5}$, then $n$ must be 6, 12 or 30 respectively. In all these cases the equality \eqref{upper} holds.

This ends our proof when $k_0=2$. We have seen that in this case the maximum number of faces $\hat{f}$ does not exceed the one of construction having all quadrangular faces except from two Hamiltonian faces which has a genus $g = \frac{n(n-2)}{4}$. As any construction with $k_0>2$ yields an embedding with at least two Hamiltonian faces, it also has a genus not smaller than $\frac{n(n-2)}{4}$.

\end{proofof}

\subsection{Odd \texorpdfstring{$n$}{Lg}}

The proof for odd $n$ is a bit trickier; $\Lcomb(n)$ itself is not as simple as for even $n$. We start with two lemmas that give examples of optimal genus.
%??\m{parity in $\mathbb{Z}_n$..}

\begin{lemma}\label{lem.tg}
    For each odd $n$, $n \ge 3$, there is a transition graph  that
    yields an embedding of $K_{n,n}$  of genus $\lfloor \frac{n(n-1)}4  \rfloor$ with a Hamiltonian face.
\end{lemma}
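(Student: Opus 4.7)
The plan is to construct, for each odd $n \ge 3$, an explicit transition graph $T_n=(C_1,C_2)$ on $\mathbb{Z}_n$ whose derived embedding of $K_{n,n}$ has genus $\lfloor n(n-1)/4 \rfloor$ and a Hamiltonian face. By Euler's formula, this amounts to producing a face count equal to $(n^2-3n+4)/2$ when $n\equiv 1\,(\bmod\,4)$ and $(n^2-3n+6)/2$ when $n\equiv 3\,(\bmod\,4)$.

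Since $\mathbb{Z}_n$ has no element of order $2$ for $n$ odd, the only alternating $2$-cycles that contribute a face do so via a generator net transition, yielding one Hamiltonian face each; and the most efficient non-Hamiltonian contributions come from $4$-alternating-cycles of net transition $0$, each lifting to $n$ quadrangular faces. A short arithmetic check shows that the target face counts above are matched by the following alternating-cycle decompositions: for $n\equiv 1\,(\bmod\,4)$, one $2$-alt with generator transition together with $(n-3)/2$ trans-$0$ $4$-alts and one additional $4$-alt with generator transition (contributing a single face of size $4n$); for $n\equiv 3\,(\bmod\,4)$, three $2$-alts with generator transitions and $(n-3)/2$ trans-$0$ $4$-alts. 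For the base case $n=3$ one may take $C_1=(0,1,2)$, $C_2=(0,2,1)$, giving the classical genus-$1$ Hamiltonian embedding of $K_{3,3}$; for $n=5$, the choice $C_1=(0,1,2,4,3)$, $C_2=(0,2,4,3,1)$ produces the $2$-alt $(0,1)$, the trans-$0$ $4$-alt $(1,2,4,3)$, and the generator $4$-alt $(2,4,3,0)$, hence genus $5=\lfloor 5\cdot 4/4\rfloor$.

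For general odd $n$, I would take $C_1$ to be a Hamiltonian cycle analogous to the one used in the even case of Theorem~\ref{thm.topsym}, e.g.\ $C_1=((n-1)/2,(n-1)/2-1,\dots,1,(n+1)/2,(n+1)/2+1,\dots,n-1,0)$, and then define $C_2$ by an explicit interleaving pattern chosen so that the alternating cycles of $T_n$ realise the decomposition listed above. The main obstacle is the verification step: one must check that $C_2$ is a single Hamiltonian cycle (not a disjoint union of shorter cycles) and that the alternating cycles have precisely the prescribed sizes and net transitions. Because in each alternating cycle the ``traversal type'' available at a given vertex depends on the placement of the $2$-alts, the construction and verification require careful case analysis based on $n\,(\bmod\,4)$, with the $n\equiv 3$ case needing additional care to fit three Hamiltonian $2$-alts alongside $(n-3)/2$ trans-$0$ $4$-alts so that the required sum conditions modulo $n$ are simultaneously satisfied.
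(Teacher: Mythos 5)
Your arithmetic is sound: the target face counts $(n^2-3n+4)/2$ for $n\equiv1\pmod4$ and $(n^2-3n+6)/2$ for $n\equiv3\pmod4$ are exactly what Euler's formula demands, and the alternating-cycle decompositions you propose do produce these counts. For $n\equiv3\pmod4$ your decomposition (three generator $2$-alts plus $(n-3)/2$ transition-$0$ $4$-alts) is identical to the paper's. For $n\equiv1\pmod4$ you propose something genuinely different: one generator $2$-alt, one generator $4$-alt (contributing a single face of size $4n$), and $(n-3)/2$ transition-$0$ $4$-alts, whereas the paper uses two generator $2$-alts, one transition-$0$ $6$-alt (contributing $n$ hexagonal faces), and $(n-5)/2$ transition-$0$ $4$-alts. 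Both have the same excess $6n-8$ and hence the same genus, and your $n=5$ check confirms your decomposition is realisable in that case. So the difference of route is not in itself a problem.

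The genuine gap is that you never produce the transition graph for general odd $n\ge7$. You give $C_1$, but $C_2$ is left as ``an explicit interleaving pattern chosen so that the alternating cycles realise the decomposition listed above,'' and you explicitly defer the two things that actually constitute the proof: showing $C_2$ is a single Hamiltonian cycle on $\mathbb{Z}_n$, and showing the alternating cycles have the prescribed sizes and net transitions. The lemma asserts existence for every odd $n$; checking $n=3$ and $n=5$ plus a plausibility argument does not establish this, and the verification is exactly the part that is nontrivial (alternating cycles are global objects and a small error in the interleaving can merge several of them or change a net transition). The paper's proof supplies closed-form expressions for $C_2$ in both residue classes and then identifies the alternating cycles explicitly — $(0,1)$, $((n+1)/2,0)$, $((n-1)/2,n-1)$ and the family of $4$-alts for $n\equiv3\pmod4$; the two $2$-alts, the $6$-alt $((n+1)/2,(n+1)/2+1,1,(n+1)/2,0,(n-1)/2)$, and the remaining $4$-alts for $n\equiv1\pmod4$. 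To close the gap you would need to write down $C_2$ concretely for each residue class and then carry out that identification, or else reduce to the paper's decomposition for $n\equiv1\pmod4$ rather than yours.
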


\begin{proof}
    A construction for %all $n$ with
     $n\equiv 3\,(\bmod\, 4)$ %of genus $\lfloor n(n-1)/4 \rfloor$ 
    is as follows.
    For $n=3$, we can trivially take a transition graph with cycles $C_1=(0,1,2)$ and
    $C_2=(0,2,1)$ which yields an embedding of $K_{3,3}$ with three Hamiltonian faces.
    We generalize this to $n\ge 7$ by taking
    \begin{align*}
      C_1=&\left(0,1, \dots, \frac{n-1}  2, n-1, n-2, \dots, \frac{n+1} 2\right) \mbox{ and} \\
      C_2=&\left(0, \frac{n+1} 2, 2, \frac{n+1} 2+2, 4, \dots, n-1,\right.  \\
           &\quad\quad\left.\frac{n-1} 2, n-2, \frac{n-1} 2-2, n-4, \dots, 1 \right).
    \end{align*}
    This transition graph has three alternating cycles of
    length two: $(0,1)$, $((n+1)/2, 0)$ and $((n-1)/2, n-1)$.
    Their net transition is $1$, $(n-1)/2$ and $(n+1)/2$, respectively,
    and all these elements have order $n$ in $\mathbb{Z}_n$.
    The remaining alternating cycles are of the form 
%    $(x, x+1, (n+1)/2-x-2, (n+1)/2-x-1)$ or
%    $(x+1, x+2, (n+1)/2-x-1, (n+1)/2-x)$,
    %$(x, x+1, n-x, n-x-1)$ 
%    $(x, x+1, n-x-2, n-x-1)$ or
%    $(x+1, x+2, n-x-1, n-x)$,
    $(x, x+1, (n+1)/2+x+2, (n+1)/2+x+1)$ or
    $(x+1, x+2, (n+1)/2+x, (n+1)/2+x+1)$,
    thus they have size 4 and net transition 0. 
    The derived graph has 3 Hamiltonian faces and all other faces of size $4$.
    % and excess $6n-8$.

    A transition graph for $n \equiv 1\,(\bmod\,4)$
    is
    \begin{align*}
    C_1 = &\left(\frac {n-1} 2, \dots, 1, \frac{n+1} 2, \dots, n-1, 0\right) \mbox { and} \\
    C_2 = &\left(\frac{n+1} 2, 0, n-1, \frac{n-1} 2 - 2, n-3, \frac{n-1} 2 - 4, \dots, \frac {n+1} 2 + 1,\right. \\
           &\quad\quad\left.1, \frac{n+1} 2 + 2, 3, \frac{n+1} 2 + 4, \dots, \frac{n-1} 2 - 1, \frac{n-1} 2\right).  
    \end{align*}

    \begin{figure}[h]
    \centering
    \begin{subfigure}[b]{0.45\textwidth}
    \centering
    \includegraphics[width=0.4\linewidth]{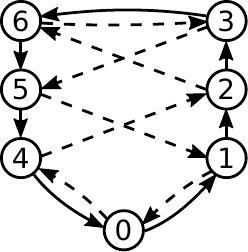}
    \end{subfigure}
    \begin{subfigure}[b]{0.45\textwidth}
    \centering
    \includegraphics[width=0.4\linewidth]{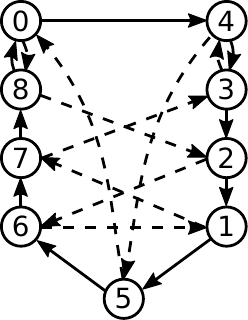}
    \end{subfigure}
    \caption{\label{fig.tg_odd} Transition graphs for $n=7$ and $n=9$ as in the proof of Lemma~\ref{lem.tg}.}
    \end{figure}

    Again it is easy to see that this transition graph has two alternating cycles $(n-1, 0)$ and $((n-1)/2, (n-1)/2-1)$,
    an alternating cycle $((n+1)/2, (n+1)/2 + 1, 1, (n+1)/2, 0, (n-1)/2)$ of length 6 and net transition $-n \equiv 0\,(\bmod\, n)$ and all other alternating cycles of length 4 and net transition 0. Thus
    the derived embedding has two Hamiltonian faces, $n$ faces of size $6$
    and all other faces of size 4.
    %and excess $4n - 4$ which corresponds
    By Euler's formula its genus is $\Lcomb(n) = \lfloor n (n-1)/4 \rfloor$. Both cases are illustrated in Figure~\ref{fig.tg_odd}.
\end{proof}

\begin{lemma}\label{lem.opt_g1g2}
   Let $n \ge 25$, $n \equiv 3\,(\bmod\,4)$. 
   Suppose  $g_1, g_2 \in \mathbb{Z}_n \setminus \{0\}$ and $g_1 + g_2 + 1 = 0$.
   Then there exists a transition graph on $n$ vertices
   with 3 alternating 2-cycles of net transition $1$, $g_1$ and $g_2$
   respectively, and all other alternating cycles of size 4
   and net transition 0. 
\end{lemma}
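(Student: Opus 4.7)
The plan is to produce two directed Hamiltonian cycles $C_1$ (solid) and $C_2$ (dotted) on $\mathbb{Z}_n$ realizing the required alternating-cycle structure. Writing $\sigma, \tau$ for the successor permutations of $C_1, C_2$ and setting $\phi := \tau \circ \sigma$, a direct verification shows that alternating cycles in the transition graph correspond to orbits of $\phi$ on $\mathbb{Z}_n$: a fixed point $v$ of $\phi$ gives a $2$-alternating cycle $v \solid \sigma(v) \dotted v$ with net transition $\delta(v):=\sigma(v)-v$, and a $2$-orbit $\{u,\phi(u)\}$ gives a $4$-alternating cycle of net transition $\delta(u)+\delta(\phi(u))$. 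So the lemma reduces to constructing permutations $\sigma,\tau$ of $\mathbb{Z}_n$ satisfying (i) both $\sigma$ and $\tau$ are single $n$-cycles; (ii) $\phi$ is an involution with exactly three fixed points $v_0, v_1, v_2$ at which $\delta$ takes the values $1, g_1, g_2$; and (iii) on every $2$-orbit $\{u,\phi(u)\}$, $\delta(u)+\delta(\phi(u))\equiv 0\pmod n$. The assumption $g_1+g_2+1\equiv 0$ is precisely what $\sum_v\delta(v)\equiv 0$ forces, in agreement with \eqref{voltages}.

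The construction I would attempt generalizes the one in Lemma~\ref{lem.tg}, which handles the special case $g_1=g_2=(n-1)/2$. Place the three prescribed $2$-cycles at $(0,1)$, $(a,a+g_1)$ and $(b,b+g_2)$ for suitable $a,b\in\mathbb{Z}_n$ depending on $g_1$, pair the remaining $n-3$ vertices by an involution analogous to the ``mirror'' pairing implicit in Lemma~\ref{lem.tg}, and within each pair assign solid shifts that are negatives of one another modulo $n$. Condition (iii) then holds automatically and (ii) holds by construction, leaving only (i) to verify. A natural ansatz for the order of $C_1$ is an ``up-then-down'' traversal of $\mathbb{Z}_n$ broken by three jumps corresponding to the three $2$-cycles; the positions of the jumps are determined by the placements $a, b$ and by $g_1, g_2$.

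The main obstacle will be verifying (i), that both $\sigma$ and $\tau$ are single $n$-cycles rather than unions of shorter cycles. Because the involutive structure of $\phi$ with three fixed points is rigid, there is little residual freedom, and for certain values of $g_1$ a uniform ansatz may fragment $\sigma$ or $\tau$. I would address this by a case analysis on $g_1$, partitioning for instance by its size within a fundamental domain (say $g_1<(n-1)/2$ versus $g_1\ge(n-1)/2$) and by divisibility relations between $g_1$ and $n$; in each case one adjusts the positions $a, b$ and the mirror pairing to keep both cycles connected. The hypothesis $n\ge 25$ provides enough vertices in $\mathbb{Z}_n$ to absorb these case-analytic adjustments uniformly, which is also why the smallest $n\equiv 3\pmod 4$ values are excluded from the statement.
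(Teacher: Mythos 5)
Your reformulation in terms of the successor permutations $\sigma,\tau$ and the composition $\phi=\tau\circ\sigma$ is correct, and it matches the structure implicit in the paper's construction: fixed points of $\phi$ give alternating $2$-cycles with net transition $\delta(v)=\sigma(v)-v$, and $2$-orbits give alternating $4$-cycles whose net transition vanishes when $\delta$ negates across the orbit. You also correctly identify the crux: with $\phi$ forced to be an involution with three prescribed fixed points and the mirror pairing forced by condition~(iii), the only thing left to establish is that $\sigma$ and $\tau$ are \emph{both} single $n$-cycles, and this is where all the work is.

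But that is exactly where your proposal stops. You never specify the mirror pairing, you never fix the ``up-then-down'' backbone or where the three jumps sit relative to it, and most importantly you never verify Hamiltonicity in any case --- you just announce that ``a case analysis on $g_1$'' will take care of it. This is a genuine gap, not a detail: for an arbitrary pair $g_1,g_2$ the involution $\phi$ and the prescribed signed shifts heavily constrain both cycles, and it is not at all clear that a na\"ive placement of the three $2$-cycles keeps $C_2$ connected. The paper does not do a case analysis at all; instead it introduces auxiliary parameters $a,b,c,d,e$ tied to $g_1,g_2$ by the linear system~(\ref{eq.abcde}), proves by a short first-moment/probabilistic argument that this system has a solution with the ten values $\pm a,\dots,\pm e$ distinct and avoiding $\{0,\pm 1\}$, and then embeds these five values at carefully chosen \emph{even} positions of a sequence $s_1=(v_0,\dots,v_t)$ so that the solid edges form four paths $P_1,\dots,P_4$ and the dotted edges form four paths $P_1',\dots,P_4'$ that are glued into single Hamiltonian cycles by the two additional alternating $4$-cycles $A_1,A_2$. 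The parity bookkeeping ($k,l$ even, $t=(n-1)/2$ odd) is what guarantees both Hamiltonicity conditions simultaneously and uniformly in $g_1,g_2$; no case split is needed. Finally, your reading of the hypothesis $n\ge 25$ as ``room for case-analytic adjustments'' is off: in the paper it is precisely the threshold that makes $\bigl(1-\tfrac 8 n\bigr)\bigl(1-\tfrac{24}{n}\bigr)>0$, i.e.\ that guarantees the existence of the auxiliary tuple $(a,b,c,d,e)$. As written, your argument does not establish the lemma.
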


The corresponding derived graph is $K_{n,n}$ with one Hamiltonian face,
$n/|g_i|$ faces of size $2|g_i|$ for $i\in\{1,2\}$ and all other faces of size 4.
This graph has genus $\lfloor \frac{n(n-1)} 4 \rfloor + 1 - \frac 1 2 ( \frac n {|g_1|} + \frac n {|g_2|})$.

\bigskip

\begin{proof}
%We describe not just one, but $e^{\Omega(n \ln n)}$ number of constructions. 
Consider
solutions of
\begin{align}\label{eq.abcde}
\begin{cases}
    a = g_1 - b\\
    d = g_2 - c\\
    e = b + c \
\end{cases}
\end{align}
with unknowns $a,b,c,d,e \in \mathbb{Z}_n$, operations modulo $n$ and a restriction that $a,b,c,d,e, -a,-b,-c,-d,-e$ are all distinct and avoid $\{0, 1, -1\}$.

%We claim that this system of equations always has a solution. 
%Indeed, let $b$ be arbitrary from $\mathbb{Z}_n \setminus \{0, -1, 1, -g_1, -g_1 -1, -g_1 + 1\}$.
%Pick $c$ uniformly at random from $\mathbb{Z}_n \setminus \{0,-1,1, -g_2,$ $-g_2-1, -g_2+1,$ $-a, a,$ $-b, b,$ $-g_2 -a, -g_2+a,$ $-g_2 - b, -g_2 + b\}$.
%Define $a$, $d$ and $e$ according to (\ref{eq.abcde}). Then using the definition and the fact that $n$ is odd so that $g + x \ne -x$ for any $g,x \in \mathbb{Z}_n$, we have that $a,b,c,d,-a,-b,-c,-d$ are all distinct and neither is in $\{0, -1, 1\}$.
%Since $b$ and $c$ are non-zero $e \ne b$ and $e \ne c$, and since $b \ne -c$, $e \ne 0$. As $n$ is odd, $\mathbb{P}(e = x) \le 1/(n-14)$ for $x \in \{-a, a, -d, d, -b, -c, -1,1\}$.
%

We claim that (\ref{eq.abcde}) always has a feasible solution. 
Indeed, let $b$ and $c$ be taken independently and uniformly at random from $\mathbb{Z}_n$.
Define $a$, $d$ and $e$ according to (\ref{eq.abcde}).
Let $A$ be the event that $-a,a$,$-b,b$, $0,-1,1$ are all distinct, and additionally $b \ne -g_2$.
Since $n$ is odd, $x \in \mathbb{Z}_n \setminus \{0\}$ implies $x \ne -x$.
Also for $y \in \mathbb{Z}_n$ there is a unique solution of $2x = y$, which we denote $y/2$.

$A$ occurs if and only if $b \not \in \{0, \pm 1, -g_2\}$ and $a \not \in \{0, \pm 1, \pm b\}$,
which is equivalent to the event $b \not \in \{0, -1, 1,$ $g_1, g_1 -1,$ $g_1 + 1, g_1/2, -g_2\}$. So
 \[
 \pr(A) \ge 1 - \frac 8 n.
\]
Similarly, let $B$ be the event that $a$, $b$, $c$, $d$, $e$, $1$ and their negations are distinct and non-zero.
For $a$ and $b$ such that $A$ occurs
\begin{align*}
%\pr(B | a,b) = \pr(c \not \in 0,-1, 1, g_2, g_2-1, g_2+1,-a, a, -b, b, -b-1, -b+1, -b-a, -b+a, g_2 -a, g_2+a, g_2 - b, g_2 + b)
%\pr(B | a,b) &= \pr(c \not \in \{0, \pm 1, g_2, g_2 \pm 1, \pm a, \pm b, -b \pm 1,  -b \pm a, -b \pm b, -b \pm c, -b \pm d})
%             &=\pr(
\pr(B | a,b) &= \pr(c \not \in \{0, \pm 1, \pm a, \pm b\} \cap d \not \in \{0, \pm 1, \pm a, \pm b, \pm c\} \cap \\
             &\quad\quad\quad    e \not \in
                    \{0, \pm 1, \pm a, \pm b, \pm c, \pm d\}) \\
&=  \pr(c \not \in \{0, \pm 1, \pm a, \pm b,  g_2, g_2 \pm 1, g_2 \pm a, g_2 \pm b, g_2 - g_1 +b, g_2/2, \\
&    \quad\quad\quad    -b, -b \pm 1,  -b \pm a, -2b,  -b/2, (g_2 - b)/2\}) \ge 1 - \frac {24} n.
\end{align*}
%%We had some simplification since $a=-d \Leftrightarrow e=-1$, $d \ne -c$ since $g_2 \ne 0$, $e \ne c$ since $b \ne 0$,
%%$e \ne -d$ since $b \ne -g_2$.
So for $n \ge 25$
\[
\pr(B) = \pr(B|A) \pr(A) \ge \left(1- \frac 8 n\right) \left(1 - \frac {24} n\right) > 0.
\]
%
%Pick $c$ uniformly at random from $\mathbb{Z}_n \setminus \{0,-1,1, -g_2,$ $-g_2-1, -g_2+1,$ $-a, a,$ $-b, b,$ $-g_2 -a, -g_2+a,$ $-g_2 - b, -g_2 + b\}$.
% Then using the definition and the fact that $n$ is odd so that $g + x \ne -x$ for any $g,x \in \mathbb{Z}_n$, we have that $a,b,c,d,-a,-b,-c,-d$ are all distinct and neither is in $\{0, -1, 1\}$.
%Since $b$ and $c$ are non-zero $e \ne b$ and $e \ne c$, and since $b \ne -c$, $e \ne 0$. As $n$ is odd, $\mathbb{P}(e = x) \le 1/(n-14)$ for $x \in \{-a, a, -d, d, -b, -c, -1,1\}$.
%
%
%Thus probability that $a,b,c,d,e,$ $-a,-b,-c,-d,-e$ and $0,-1,1$ are all distinct is 
%at least
%\[
%1 - \frac 8 {n-14} > 0.
%\]
%Indeed, let $b,c$ drawn independently and uniformly at
%random from $\{2, \dots, n-2\}$, and let $a,d,e$ be defined according to (\ref{eq.abcde}). 
%There are $\binom 5 2 = 10$ pairs  $\{x, y\}$ of distinct variables from $\{a,b,c,d,e\}$.
%Since they are nonzero, $a \ne b$, $c \ne d$, $e \ne b$ and $e \ne c$.
%For each of the remaining six pairs $\{x,y\}$ we have $\mathbb{P}(x=y) \le (n-3)^{-1}$.  Thus the probability that no two variables coincide is at least $1 - 6/(n-3) > 0$. 
%%Note that we could alternatively choose first $b$ then $c$ deterministically by avoiding the above defined sets.
We further consider $(a,b,c,d,e)$ as a fixed solution of (\ref{eq.abcde}).

Let $t=(n-1)/2$.
Pick (we can always do it; in fact, we can do it in $e^{\Omega(n \ln n)}$ ways) a sequence $s_1=(v_0, v_1, \dots, v_t)$, such that
\begin{enumerate}
\item For each $x \in \mathbb{Z}_n \setminus \{0\}$, $s_1$ contains exactly one of $\{-x, x\}$.
\item $v_0=0$, $v_1=1$, $v_k = a$, $v_{k+1}=b$, $v_l= c$, $v_{l+1} = d$ and $v_t = e$, where $k$ and $l$ are even and $1 < k < l < t$.
\end{enumerate}

Define $s_2=(-v_1, \dots, -v_t)$. Then the elements of $s_1$ and $s_2$ partition $\mathbb{Z}_n$.
We will construct the transition graph by adding solid and dotted edges to an empty graph on vertex set $\mathbb{Z}_n$.
First add directed paths of solid directed edges 
\begin{align*}
&P_1=(v_0,v_1,\dots, v_k),\\ 
&P_2=(-v_1, -v_2, \dots, -v_k, v_{k+1}, v_{k+2}, \dots, v_l), \\
&P_3=(-v_{k+1}, -v_{k+2}, \dots, -v_l, v_{l+1}, v_{l+2}, \dots, v_t) \quad \mbox{and} \\
&P_4=(-v_{l+1}, -v_{l+2}, \dots, -v_t).
\end{align*}
Next, add %solid directed edges $(-v_k, v_{k+1}) = (-a,b)$, $(-v_l, v_{l+1}) =(-c,d)$ and
dotted edges $(v_{k+1}, -v_k) $, $(v_{l+1}, -v_l)$ and $(v_1,v_0)$
(i.e., the edges $(b, -a)$, $(d, -c)$ and $(1,0)$).
These edges complete three alternating 2-cycles of net transition $a+b = g_1$, $c+d = g_2$ and $1+0=1$ respectively,
see Figure~\ref{fig.g1g2_odd}.

Now for every $i \in \{1, \dots, t-1\} \setminus \{k, l\}$ add dotted directed edges $(v_{i+1}, -v_i)$ and $(-v_{i+1}, v_i)$,
this yields an alternating 4-cycle  $v_i \solid v_{i+1} \dotted -v_i \solid -v_{i+1} \dotted v_i$ of net transition $v_{i+1} + v_i - v_{i+1} - v_i = 0$. 

Since $k$ and $l$ are even and $t$ is odd, all the dotted edges added so far yield the following paths that go backwards alternating between $s_1$ and $s_2$: 
\begin{align*}
&P_1'=(-v_t, v_{t-1}, -v_{t-2}, \dots, -v_{l+1}), \\
&P_2' =(v_t, -v_{t-1}, v_{t-1}, \dots, \\
       & \quad \quad \quad \quad\quad\quad v_{l+1}, -v_l, v_{l-1}, -v_{l-2}, \dots, v_{k+1}, -v_k, v_{k-1}, \dots, -v_2, v_1, v_0), \\
&P_3'=(v_l, -v_{l-1}, \dots, -v_{k+1}) \quad \mbox{and} \\
&P_4'=(v_k, -v_{k-1}, \dots, -v_1).
\end{align*}

\begin{figure}
    \centering
    \includegraphics[width=0.75\linewidth]{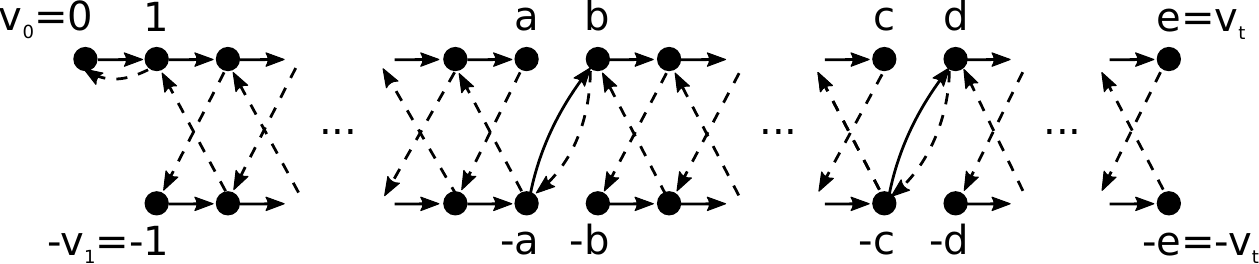}
    \caption{\label{fig.g1g2_odd} Construction of the proof of Lemma~\ref{lem.opt_g1g2} (adding cycles $A_1$ and $A_2$ would complete the transition graph).}
\end{figure}

Finally, add two alternating 4-cycles: 
\begin{align*}
&A_1= c \solid {-b} \dotted {-e} \solid 0 \dotted c = (v_l, -v_{k+1}, -v_t, v_0)\quad\mbox{and} \\
&A_2 = e \solid {-d} \dotted a \solid {-1} \dotted e = (v_t, -v_{l+1}, v_k, -v_1).
\end{align*}
Notice that the edges of $A_1$ and $A_2$ complete a solid Hamiltonian cycle $(P_1P_2P_3P_4)$ and a dotted Hamiltonian cycle $(P_1'P_4'P_2'P_3')$, where for paths $A$, $B$ we use notation $AB$ to denote the concatenation of their vertices in the obvious way.

To complete the proof, we have to check that $A_1$ and $A_2$ both have net transition $0$. The net transition of $A_1$ is $-c - b + e = 0$ since $e=b+c$. The net transition of $A_2$ is 
\[-e - d - a -1 = -(a+d + e +1) = - (a+d+b+c+1) = -(-1 + 1)=0.\] In the last step we used $a+b+c+d = g_1+g_2 = -1$, which follows from the first two equations of (\ref{eq.abcde}) and the theorem's assumption.
\end{proof}

\bigskip

We are now ready to complete the proof of Theorem~\ref{thm.topsym}.

\medskip

\begin{proofof}{Theorem~\ref{thm.topsym}, odd $n$.}
    We continue with notation from the proof for even $n$.
    We first prove the lower bound. In the end of this proof we will show that it is optimal.

    Let $s_1, \dots, s_f$ be the sizes of the faces of $\mathcal{M}$. 
    Let $v$, $e$, and $g$ denote the number of vertices,
    the number of edges and the genus of $\mathcal{M}$ respectively. 
    Denote the \emph{excess} of $\mathcal{M}$ by
    \[
    \ex = \sum_i (s_i - 4) = 2e - 4f =  2 n^2 - 4 f.
    \]
    By Euler's formula $\ex = 8g - 2e + 4v - 8 = 8g - 2n^2 + 8n - 8$.
     So using the definition of $\Lcomb(n)$, in order to prove the inequality stated in the theorem, it suffices
    to show 
%    \begin{align}\label{eq.ex_to_show}
%    \ex \ge  \begin{cases}
%                         6n-8, &\mbox{if } n \equiv 1\,(\bmod\,4) \\
%                         6n - 4 - 4(p_1 + \frac n {p_1}), &\mbox{if } n \equiv 3\,(\bmod\,4) \mbox{ and } n \bmod p_1^2 \ne 0; \\
%                         6n- 12, &\mbox {otherwise}.
%                \end{cases}
%    \end{align}
    \begin{align}\label{eq.ex_to_show}
    \ex \ge  \begin{cases}
                         6n - 4 - 4\left(p_1 + \frac n {p_1}\right), &\mbox{if } n \equiv 3\,(\bmod\,4), n \ne p_1 \mbox{ and } p_1^2 \nmid n, \\
                         6n - 4 - 4\left(1 + \frac n {p_1}\right), &\mbox{if } n \equiv 3\,(\bmod\,4) \mbox{ and } (p_1^2 \mid n \mbox { or } n=p_1), \\
                         6n - 16, &\mbox{if } n \equiv 1\,(\bmod\,4), 3 \mid n \mbox { and } 9 \nmid n, \\
                         6n - 8, &\mbox{if } n \equiv 1\,(\bmod\,4) \mbox{ and } (3 \nmid n \mbox { or } 9 \mid n). \\
                \end{cases}
    \end{align}
    Since %%$K_{n,n}$ is simple and 
    $g$ is integer, we must have
    \begin{equation} \label{eq.div8}
         \begin{cases}
                         6n-8 - \ex \equiv 0\,(\bmod\,8) &\mbox{for } n \equiv 1\,(\bmod\,4), \\
                         6n - 12 - \ex \equiv 0\,(\bmod\,8) &\mbox{for } n \equiv 3\,(\bmod\,4).
          \end{cases}
    \end{equation} 
    With voltage group $\mathbb{Z}_n$ a face $F$ of size $k$ and voltage $h$
    % in the base graph corresponds to $n/|h|$ faces of length $k |h|$ in the derived graph. Thus $F$ 
    contributes
    \begin{equation}\label{eq.ex_face}
       \ex(F) = \frac n {|h|} (k|h| - 4) = n k - \frac {4n} {|h|}  
    \end{equation}
    to the excess of the derived embedding. Note that since each face $F$ in $\mathcal{M}$ has size at least 4, $\ex(F) \ge 0$.

%Let us now prove (\ref{eq.ex_to_show}). %In order for $\mathcal{M}$ to be an embedding of $K_{n,n}$,
%for each $k \in \mathbb{Z}_n$ there must be an edge from $w$ to $b$ with voltage $k$ modulo $n$.
Suppose %embedding $\mathcal{\tilde M}$ of the dipole $D_n$ with a voltage function $\phi: E(D_n) \to \mathbb{Z}_n$ yields an embedding $\mathcal{M}$ of $K_{n,n}$ as in
there exists an embedding $\mathcal{M}$ of $K_{n,n}$ as in the statement of the theorem,
but with genus smaller than $\Lcomb(n)$, i.e. suppose that (\ref{eq.ex_to_show}) does not hold. Let $B=(D_n, \Ga, \S_0, \alpha)$ be the corresponding base graph as before.
Note that the faces of $\Ga$ have even sizes and the orders of their voltages are divisors of $n$, in particular they are odd.
%As before, let $F_0$ be a face of the base graph which generates at least one Hamiltonian face in $\mathcal{M}$, $k_0$ is its size and $g_0$ is its voltage.
Then $|g_0| = 2n/{k_0}$ and by (\ref{eq.ex_face}) $\ex \ge \ex(F_0) = k_0(n-2)$. For $k_0 \ge 8$ (\ref{eq.ex_to_show}) holds.
%
% As the genus $g$ of $\mathcal{M}$ is integer,
%it still holds with $k_0 = 6$, where we can have equality in (\ref{eq.ex_to_show}) only in the case
%$n\equiv3\,(\bmod\,4)$ and $n \equiv 0\,(\bmod\,3)$. 
It cannot be $k_0=6$ either, since in this case $3 \mid n$, so $n$ is not prime and
by (\ref{eq.div8}), (\ref{eq.ex_to_show}) still holds. The only case where we could have equality in (\ref{eq.ex_to_show}) for $k_0=6$ is $n \equiv 1\,(\bmod\,4)$ and $9 \mid n$.

% In this case $3 | n$, so by (7) and (8), the equality  is only possible in the case n=1 (mod 4) and n = 0 (mod 9).

Also $k_0 \ne 4$, since $4 \nmid 2n$. Thus for $B$ to be a counterexample 
we must have $k_0 = 2$, $|g_0|=n$ and $\ex(F_0) = 2n-4$.
Since orders of elements are preserved under a group automorphism,
we can assume without loss of generality that $g_0=1$.

%, and if it has a face of size $4$
%and voltage $g$ with $|g| \ge 7$, $\ex \ge 2n-4 + 4n - 4n/7$

For any face of $\Ga$, we have that $\ex(F) = 0$ if and only if $F$ has size $4$ and voltage $0$. As before, we call such faces $F$ \emph{optimal}, and the remaining faces $F$, $F \ne F_0$ non-optimal. We assume that  $F_1, \dots, F_l$ are all the non-optimal faces of $B$, and using $g_0 = 1$ and 
%For $i\in\{0,\dots,l\}$ denote by $g_i$ the voltage (sum of arc voltages along the boundary walk) of $F_i$.
%As before, \m{sulyginti žymėjimus}
(\ref{voltages}) we have
\begin{equation}\label{eq.gsum}
    1 + g_1 + \dots + g_l = 0.
\end{equation}
For a non-optimal face we have $g_i = jn/|g_i|$ for some $j \in \{0, |g_i| - 1\}$, so
%,
%we must have that 
\begin{equation}\label{eq.gcd}
\gcd(n/|g_1|, \dots, n/|g_l|)=1.
\end{equation}
%Since $g_0 = 1$, at least one $g_i$ with $i \ge 0$ must be non-zero. 
%Note that for non-optimal $F_i$ of size at most $4$, then $g_i \ne 0$
Let $F_i$ be non-optimal.
As $K_{n,n}$ has no repeated edges,
$g_i$ can only be zero if $k_i \ge 6$.
Using (\ref{eq.ex_face}) we have $\ex(F_i) \ge 2n-4n/3 = 2n/3$.
Thus if $B$ is a counterexample to (\ref{eq.ex_to_show}), then $2n-4 + 2 l n/3 \le \ex< 6n - 8$ which implies  $l \le 5$.
If $F_i$ has size $k_i \ge 8$, then 
\[
\ex \ge \ex(F_0) + \ex(F_i) \ge 2n -4 + k_i n - 4n \ge 6n -4
\]
and again (\ref{eq.ex_to_show}) holds. Similarly,
there cannot be any face of size $6$ and voltage $g$ with $g \bmod\,n \ne 0$, since in this case
$|g| \ge 3$ and $\ex \ge 2n-4 + 6n - 4n/3  > 6n-8$.

%Using the above observations and (\ref{eq.ex_face}) we can conclude that if (\ref{eq.ex_to_show}) fails, then: 
Using (\ref{eq.gsum}), the fact that the right side of (\ref{eq.ex_face}) increases with $|h|$ and the above observations
we conclude that (\ref{eq.ex_to_show}) holds with inequality unless perhaps
\begin{enumerate}
  %\item if some non-optimal $F_i$ has size $6$ then $g_i=0$, $l=3$ and the other non-optimal faces $F_j$, $j \ge 1$ have size 2 and $|g_j| \in \{3,5\}$;
  \item some non-optimal face, say $F_1$, has size 6, $g_1=0$, $l = 3$, the other non-optimal faces $F_j$, $j \ge 2$ have size 2 and $|g_j| \in \{3, 5\}$ (in the case $l=2$, $|g_2| = n$ by (\ref{eq.gsum}), so (\ref{eq.ex_to_show}) holds with equality) or
  \item some non-optimal face, say $F_1$, has size $4$, $l = 2$ and $g_1,g_2 \in \{3,5\}$ (if $l=1$ and $|g_1|=n$ and (\ref{eq.ex_to_show}) holds with equality).
%  \item if all $F_i$ have size $2$ then either $t \le 2$ or $|g_j| \in \{3,\dots, 11\}$.
\end{enumerate} 
Note that (\ref{eq.gcd}) implies that $n \le \prod_{i=1}^l |g_i|$. In particular, if a counterexample $B$  has 
a non-optimal face which has size 4 or 6, then $n \le 15$ and it can be easily checked using (\ref{eq.ex_face}) only 
that (\ref{eq.ex_to_show}) holds for all such $n$. %\m{patikrinti!}

So we may further assume that all non-optimal $F_i$ have $k_i = 2$. By (\ref{voltages}) $2n = 2 (l+1) + 4 n_{opt}$
where $n_{opt}$ is the number of optimal faces in $\Ga$. %and $|F_i|$ is the size (number of edges in the boundary) of $F_i$, 
So $l = n - 1 - 2n_{opt}$ is even. By (\ref{eq.gsum}) $l>0$, therefore  $l \in \{2,4\}$.

Let us assume that $n=|g_0| \ge |g_1| \ge \dots \ge |g_l|$. 

If $l=4$, then $(|g_1|, \dots, |g_l|)$ cannot equal $(5,5,3,3)$ 
or $(7,5,3,3)$ as in these cases by (\ref{eq.gcd}) $n$ is 15 or 105 respectively, so by (\ref{eq.ex_face}) the inequality (\ref{eq.ex_to_show}) holds.
%As (\ref{eq.ex_face}) is monotone in $|g|$, 
For $(|g_1|, .., |g_l|) = (q, 5, 3, 3)$ with $q \ge 9$, we have by (\ref{eq.ex_face}), that $\ex > 6n-4$. Since the right side of (\ref{eq.ex_face}) is increasing in $|h|$, it follows 
by (\ref{eq.gcd}) that
 %using (\ref{eq.ex_face}) and (\ref{eq.gcd}) we have 
 $|g_1|=q$ and $|g_2|=|g_3|=|g_4| = 3$,
where either $q=n$ or $n= 3 q$ for some $q$ not a multiple of $3$.
In the former case $\ex = 2 (2n-4) + 3 (2n - 4n/3) = 6n -8$, so (\ref{eq.ex_to_show}) holds with equality.
In the latter case
\[
\ex = 2n - 4  + 2n - 4n/q + 3(2n - 4n/3) = 6n - 16.
\]
Using (\ref{eq.div8}) this is only possible for $n \equiv 1\,(\bmod\,4)$, $3 \mid n$ and $9 \nmid n$, so in
both cases (\ref{eq.ex_to_show}) holds with equality as well.
%In the case  $q=n$, we have no contradiction to (\ref{eq.ex_to_show}),
%while in the case $q=n/3$ the genus of $\mathcal{M}$ is $\lceil n(n-1)/4 \rceil - 2$.

It remains to consider the case $l=2$. Suppose
$|g_1| = q$ and $|g_2|=p$ with $p \le q$. Recall that $p \ge 3$. By (\ref{eq.gcd})
we have $p q = n \gcd(p,q)$ and
\[
    \ex = 2n-4 + 2n - \frac {4n} q + 2n - \frac {4n} p = 6n - 4 - 4 a(p,q).
\]
where $a(p,q) = n/q + n/p$. As  $a(p,q) = (p+q) / \gcd(p,q)$, it is even, so by (\ref{eq.div8}) such excess is only possible for $n \equiv 3\,(\bmod\,4)$.

%We claim that $\frac {(p+q)} {\gcd (p,q)}$ is maximized (and genus of $\mathcal{M}$ minimized)
%when $p=p_1$ and $q=n/p_1$ in case $n \mod p_1^2 \ne 0$ and when $p=p_1$ and $q=n$ otherwise. 
Thus
\[
\ex \ge 6n - 4 - 4 \max_{(p,q)\in S} a(p,q)
\]
where the set $S$ consists pairs  $(p,q)$  of 
%prime 
divisors of $n$ with $\gcd(n/p, n/q) = 1$ and $p, q > 1$.

For a prime $y$ and a positive integer $x$, let $\mul(x,y)$ denote the largest
integer $k$ such that $y^k \mid x$. (\ref{eq.gcd}) implies that for each prime 
factor $r$ of $n$ either $\mul(p,r)=\mul(n,r)$ or $\mul(q,r) = \mul(n,r)$.

First note that $a(p, n) \le a(p_1, n)$ for any divisor $p$ of $n$ such that $p > 1$. $(p,q)=(p_1,n)$ will be our first candidate to minimize $\ex$. For such $(p,q)$ we have $\ex = 6n - 4 - 4 (1 + n/p_1)$ as in the second case of (\ref{eq.ex_to_show}). %Then $(p,q) \in S$ for any divisor $p$ of $n$ which is greater than 1.
%In this case $a(p,q)$ is maximized $p=p_1$, the smallest prime divisor of $n$. 

Now suppose $q < n$.
If $\gcd(p,q) > 1$, let $r$ be a prime that divides both $q$ and $p$.
If $\mul(q,r) = \mul(n,r)$ define $p' = p/r^{\mul(p,r)}$ and notice
that $a(p',q) > a(p,q)$ and $(p',q) \in S$. (To see that $p'$ cannot equal $1$, note that this would
imply $\mul(q,r) = \mul(n,r)$ for all $r \mid n$, so $q=n$.) Similarly, if $\mul(p,r) = \mul(n,r)$
we can define $q' = q/\mul(q,r)$ and get $a(p,q') > a(p,q)$.
Repeating this for all $r$ that divide $\gcd(p,q)$ we obtain
$(p',q') \in S$ such that $a(p',q') > a(p,q)$, $\gcd(p', q')=1$ and so $p' q'=n$.

Without loss of generality we can further focus on the set $S_1$ of pairs $(p,q) \in S$ with $p \le q$, $pq = n$ and $\gcd(p,q) = 1$. When $p$ has at least two unique prime divisors, say $p=r^k b$ where $r$ is prime, $b>1$ and $r \nmid b$, 
we can define $p' = p/r^k$ and $q'=q r^k$. Then $a(p',q') = p/r^k + q r^k > p + q = a(p,q)$.
Thus if $q < n$
\[
   a(p,q) \le \max_{r} a(r^{\mul(n,r)}, n/r^{\mul(n,r)}) 
\]
where the maximum is over all prime divisors of $n$.
 
Suppose the smallest prime divisor $p_1$ of $n$ has multiplicity 1. %For $q=n$ we have $a(p,q)=a(p,n) \le a(p_1, n)$.
If $n$ is prime, then $p_1=p=q=n$ and $a(p,q) = 2$ is the only possibility, in which case (\ref{eq.ex_to_show}) holds with equality.
Otherwise, for $(p,q) \in S_1$, we have $p_1 \le p  \le \sqrt n$.
%It cannot be $n=p_1$ as $q < n$. % in this case $p=q=n$, and we settled the case $q=n$ above. %$a(p,q)=2$ and (\ref{eq.ex_to_show}) follows.
Since $f(s) = a(s, n/s) = s + n/s$ is decreasing in the interval $(0, \sqrt n\,]$,
we have $a(p_1, n/p_1) > a(p, n/p)$ when $p > p_1$. 
So when $\mul(p_1, n) =1$ and $(p,q) \in S_1$ %, $1 < p \le q \le n$
%ar čia aš dar darau prielaidą q<n ar nebedarau??? Lyg nebedarau???
%ne, vis dar darau. Pirminiai atskirai, nes čia eilė = 1 negalima
\[
  a(p,q) \le \max(a(p_1, n/p_1), a(p_1, n)) = a(p_1, n/p_1).
\]
Now suppose $p_1^2$ divides $n$. Then a maximum $a(p,q)$ over $(p,q) \in S_1$ could be attained either
with $p= r^t$ or $q=r^t$ for some prime divisor $r$ of $n$, $t=\mul(n,r)$ and $q = n/p$ or with $p=p_1$ and $q=n$.
We claim that it is always the latter. % the latter choice always gives maximum $a(p,q)$.
To see this, first note that this is true for $n$ a prime power.
If $n$ is not a prime power, take $r$ that maximizes $a(r^t, n/r^t)$. Note
that  $r^t < b$, where $b=n/r^t$, otherwise we could
replace $r$ with another prime divisor $r_2$ of $n$ and get a larger value of $a$.
So $r < b$ and for $t \ge 2$
\begin{align*}
  a\left(r, n\right) - a\left(r^t, \frac n {r^t} \right) &= b r^{t-1} + 1 - b - r^t  \\
  &=(r^{t-1} - 1) (b-r) - (r-1) > 0
\end{align*}
and $a(r^t,n/r^t) < a(r,n) \le a(p_1,n)$. For $t=1$, writing $n=p_1^2 r x$ we get
\begin{align*}
 a(p_1, n) - a(r, n/r)  &= p_1 r x + 1 - p_1^2 x - r \\
 &=p_1(r-p_1)x + 1-r \ge p_1(r-p_1) + (1-r) \\
 &=(p_1 - 1) (r-p_1-1) > 0.
\end{align*}
%since $r^l < b$ implies $r < b$.
In both cases $a(r^t, n/r^t) < a(p_1, n)$. This finishes the proof of (\ref{eq.ex_to_show}).

Let us show that (\ref{eq.ex_to_show}) is best possible.
If $n$ is prime or $n \equiv 1 \,(\bmod\,4)$ and ($3 \nmid n$ or $9 \mid n$),
Lemma~\ref{lem.tg} gives a construction of genus $\Lcomb(n)$.

Suppose $n \equiv 3 \,(\bmod\,4)$
 and $p$ is not prime. The first few such $n$ 
are $15, 27, 35$.
 If $p_1^2 \nmid n$, let $(p,q) = (p_1, n/p_1)$,
otherwise let $(p,q) = (p_1, n)$.
Since $n/p$ and $n/q$ are relatively prime, by Euclid's algorithm we can find $g_1, g_2 \in \mathbb{Z}_n$
 such that 
 \begin{equation}\label{eq.gg1}
 g_1 + g_2 + 1 \equiv 0 \, (\bmod\,n), \quad |g_1|=p \quad \mbox{and} \quad |g_2| = q.
 \end{equation}
For $n=15$ the transition graph corresponding to genus $\Lcomb(15)=49$ embedding
is given by %a solid and dotted Hamiltonian cycles $(_1, H_2)$,
\begin{align*}
& C_1 = (0, 1, 2, -1, -2, 3, 4, -3, -4, 5, 6, 7, -5, -6, -7) \\
& C_2 = (7, -6, 5, -4, 3, -2, 1, 0, 4, -3, -7, 6, -5, 2, -1).
\end{align*}
It was found by computer and inspired Lemma~\ref{lem.opt_g1g2}.
It corresponds to choices $g_1 = 5$, $g_2=9$ $a= (g_1-1)/2, b=a+1, c=(g_2 -1)/2, d=c+1$ and $s_1 = (0,1, \dots, 7)$ in that
lemma. For other $n$, since they are larger than $25$, we can apply Lemma~\ref{lem.opt_g1g2}.

Finally, suppose $n \equiv 1 \,(\bmod \, 4)$, $3 \mid n$ but $9 \nmid n$. The smallest
such $n$ are $21, 33, 57$. Let  
$(p,q) = (3, n/3)$ and define $g_1, g_2$ by solving (\ref{eq.gg1}).
Define $g_3=n/3$ and $g_4 = -n/3$.  Then
\[
   g_1 + g_2 + g_3 + g_4 + 1 = 0.
\]
Construct a transition graph 
as in the proof of Lemma~\ref{lem.opt_g1g2}, but with the following modification:
after choosing suitable $a$, $b$, $c$, $d$ and $e$, choose $f \in \mathbb{Z}_n$, such
that $f \not \in \{ {-x}, x\}$ and $f \not \in \{x-n/3, {-x}-n/3\}$ for
 $x \in \{0, 1, a, b, c, d, e\}$, and $-f \ne f+n/3$. We have at least $n-27$ possibilities for $f$. Assuming $n > 27$, we construct a transition graph as in the proof of Lemma~\ref{lem.opt_g1g2} 
but additionally require that $v_m = f$ and $v_{m+1}=f+n/3$ for
some $m$, $l+1 < m < t$, where $l, m$ and $t$ are as in Lemma~\ref{lem.opt_g1g2}. This construction has the same properties
as the construction for $n=4k+3$, except now $t$ is even
and the paths $P_3'$ and $P_4'$ have their endpoints switched,
so the dotted edges make up two cycles instead of a single Hamiltonian cycle. We can remedy this
by replacing the dotted edges $(v_{m+1}, -v_{m})$
and $(-v_{m+1}, v_{m})$ with
$(v_{m+1}, - v_{m})$ and $(-v_{m+1}, -v_{m})$. This
eliminates one alternating 4-cycle and introduces two new 2-cycles
$(v_{m}, v_{m+1})$ and $(-v_{m}, -v_{m+1})$ with net transition
$n/3$ and $-n/3$ respectively. Thus in the corresponding voltage graph we have 2-faces of orders $n/3, 3, 3,3,3$
and genus $\Lcomb(n)$ as required. Finally, a similar construction
also exists for $n=21$. The transition graph is given by 
\begin{align*}
%& C_1=(0, 1, 2, -7, -8, 3, 4, -9, -10, 5, 6, 7, 10, 9, 8) \\
%& C_2=(7, 9, 5, -10, 3, -8, 1, 0, 4, -9, 8, 6, 10, 2, -7).
C_1 = (0, 1, 5, -1, -5, 9, 7, 6, 8, -9, -7, -6, -8, -2, 3, 10, -4, 2, -3, -10, 4) \\
C_2 = (4, 10, 3, 2, 5, -1, -4, -10, -3, -2, -8, 6, -7, 9, -5, 1, 0, 8, -6, 7, -9)
\end{align*}
\end{proofof}

\section{Proofs for 3-dimensional embeddings}
\subsection{Proof of the lower bound: the geometric part}
\label{sec.geom.lower1}
%{\color{orange}
All surfaces in this section will be embedded in $\mathbb{R}^3$, compact and
piecewise linear.
A \emph{piecewise linear curve}, or simply a \emph{curve}, on a surface $\S$ is
a piecewise linear map $f: [0,1] \to \S$.
 We call a curve a \emph{simple arc}, or simply an \emph{arc}, if
it is injective. We call it a \emph{simple closed curve}, or simply a \emph{closed curve},
if it is injective on $[0,1]$ with an exception that  $f(0) = f(1)$.
    We call a curve \emph{simple} if it is an arc or a closed curve.
     The endpoints of a (closed or non-closed) curve $f$ are $f(0)$ and $f(1)$,
    and we say that $f$ goes from $f(0)$ to $f(1)$. We say that a curve $f$ is \emph{internally disjoint}
    from a  set $X$ if $f$ is closed or $f(x) \not \in X$ for all $x \in (0,1)$.
%The set of endpoints is defined to be empty for a closed curve,
%    however, when we obtain a closed curve from an embedded loop of a multigraph, we assume it has one endpoint $f(0)$.
%defined analogously, except we have $f(0) = f(1)$. Since
%all arcs and closed curves we deal from now on a simple and piecewise smooth, we refer to them
%simply as \emph{arcs} and \emph{closed curves} respectively.
For convenience, we make no distinction between a curve $f$ and its image $f([0,1])$ when it is clear from the context.

Let $a$ and $b$ be two simple curves on a surface $\S$ such that no endpoint belongs to the other curve and $a \cap b$ is finite.
Then each point in $x \in a \cap b$ can be classified as a touching or crossing, see \cite{juvanmalnicmohar1996, malnicnedela1995} and references therein. 
%\m{More basic literature?} 
Fix an orientation of $\S$. Define the \textit{intersection sign} $\sigma(x,a,b) \in \{-1, 0, 1\}$ as shown in Figure~\ref{fig.intersection_type}.
\begin{figure}[h]
\centering
\includegraphics[scale=0.75]{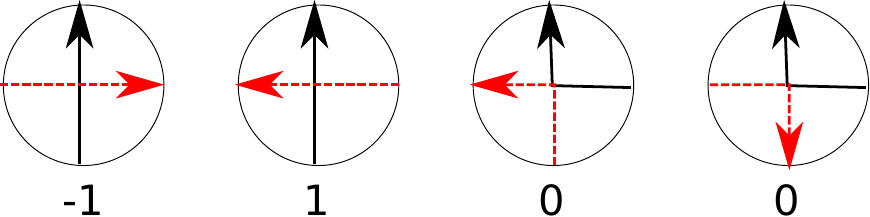}
\caption{The sign $\sigma(x,a,b)$ of the intersection point $x$ between a solid black curve $a$ and a dotted red curve $b$.}
\label{fig.intersection_type}
\end{figure}
More formally, in a small neighbourhood $U \subset \S$ of $x$, we can represent $a$ as a segment (i.e., an arc) $a_{in}$ that ends at $x$ and a segment $a_{out}$ that
starts at $x$, so that $a_{in} \cap a_{out} = \{x\}$ and $a_{in} \cup a_{out} = a \cap U$. Similarly we can define $b_{in}$ and $b_{out}$. We set 
$\sigma(x, a,b) = -1$, if the cyclical clockwise ordering around $x$ of the segments incident to $x$ is $(a_{in}, b_{in}, a_{out}, b_{out})$. We set $\sigma(x, a,b)=1$ if the ordering is $(b_{out}, a_{out}, b_{in}, a_{in})$. Otherwise, we set $\sigma(x,a,b)=0$ (i.e. the intersection is a touching). In the case $x$ is an endpoint of either curve we may define $\sigma(x,a,b) = 0$; this case
can always be avoided by a small local modification.

 %assigned the \emph{crossing type} $\sigma(x,a,b) \in \{-1, 0, 1\}$ as follows \dots
%\m{Apibrėžti}.

%Let $\S \subset \mathbb{R}^3$ be a closed connected orientable surface, l
Let $a$ be a simple curve on an oriented surface $\S$
and $C$ a finite set of pairwise disjoint simple curves on $\S$. %, internally disjoint from the endpoints of $a$.
% in $\S$, such that endpoints of each arc in $\{a\} \cup C$ are disjoint from
%all other curves.
 Suppose $a$ shares with each curve in $C$ a finite number of points. 
We define 
\[
\sigma(a, C) = \sum_{b \in C} \sum_{x \in a \cap b} \sigma(x, a,b).
\] 
%This definition can be extended
%also for the case where $a$ does not necessarily share a finite number of points with curves in $C$.

Now let $\mathcal{M}$ be an embedding of a directed multigraph $G$ into a connected oriented surface $\S \subset \mathbb{R}^3$,
and let $C$ be a set of pairwise disjoint simple curves on $\S$. %, internally disjoint from $\mathcal{M}(V(G))$. 
%Consider a set of curves $\mathcal{E} = \{\mathcal{M}(e) : e\in E(G)\}$.
For $e \in E(G)$ let $\mathcal{M}(e)$ be the simple curve on $\S$ assigned to $e$ by $\mathcal{M}$. 
Given a positive integer $n$, the \emph{voltage assignment} function $\alpha_{\mathcal{M}, C, n}: E(G) \to \mathbb{Z}_n$ is \emph{well defined}
 if for each edge $e \in E(G)$, $\mathcal{M}(e)$ shares with each curve in $C$ a finite number of points and $\mathcal{M}(G)$ is disjoint from the endpoints of each curve in $C$. 
 In that case for $e \in E(G)$ we set
\begin{equation}\label{eq.cutvolt}
\alpha_{\mathcal{M},C, n}(e) = \sigma(\mathcal{M}(e), C) \bmod n.
\end{equation}
%}

%%open%% {\color{orange} In knot theory, a set of closed curves in $\mathbb{R}^3$ is called \emph{unlink} if \dots}

To study symmetric embeddings, and more generally immersions, of a surface into $\mathbb{R}^3$
we can use the following approach, see \cite{undine}. Given a surface $\S' \subset \mathbb{R}^3$ which is invariant under the action of a finite subgroup $\Gamma$ of
orientation preserving Euclidean isometries in $\mathbb{R}^3$, we can define the quotient mapping $q$ from $\mathbb{R}^3$ to the orbifold $\mathbb{R}^3 / \Gamma$, the quotient space obtained by identifying each orbit of $\Gamma$ to a single point and equipped with a suitable topology. For $n$-fold rotational symmetry the corresponding quotient \emph{surface}, or orbifold, $\S' / \Gamma$ 
is a closed %, connected
 and orientable surface, and the map $q$ restricted to $S$ is a branched
covering of $\S' / \Gamma$ with an even number of branch points. 
 
We additionally have an embedding of a graph $G'$ into $\S'$ and we ask that this embedding is invariant under $\Gamma$. As above, we can then define an embedding of a quotient graph $G$ in $\S' / \Gamma$. %, the base embedded graph for $G'$.
We will give a proof of the next rather straightforward proposition as we were unable to find a suitable equivalent in the literature.
%We additionally have an embedding $\mathcal{M}': G \to \mathbb{S}'$ with $\IG(\mathcal{M}')$ containing $\Gamma$, and
%we will study the quotient graph $G'$ $q(\mathcal{M}'(G'))$, which corresponds to the embedded voltage graph in $\mathbb{S} / \Gamma$. We
%will represent $\mathbb{S} / \Gamma$ as a piecewise linear embedding in $\mathbb{R}^3$. Since $q$ is very simple for rotational symmetry,
%we prove the next proposition only using elementary geometry.
%Since the main idea of the lower bounds uses a simple geometric property, we formulate and prove the next technical fact we need about orbifolds in elementary geometric terms.

\begin{prop}\label{prop.cuts}
Let $n\ge 3$ and $g' \ge 0$ be integers, and let $G$ be a directed connected multigraph and $G'$ a connected multigraph. 
%The statements below are equivalent.
The two statements below are equivalent.

%but do I need the embedding as well?
\begin{enumerate}[(a)]
\item There is a piecewise linear embedding $\mathcal{M}'$ of $G'$ into a closed connected
oriented surface $\S' \subset \mathbb{R}^3$ of genus $g'$ such that $\mathcal{M}'$ has $n$-fold rotational symmetry,
some vertex of $\S'$ is a fixed point under the rotation, but no vertex of $G'$ is mapped to
a fixed point. 
\item There is a piecewise linear embedding $\mathcal{M}$ of $G$ into a closed connected oriented surface $\S \subset \mathbb{R}^3$
together with a set $C$ of pairwise disjoint arcs or closed curves on $\S$, 
%internally disjoint from $\mathcal{M}(V(G))$,  %this is now inside well defined
such that 
%%open%% the closed curves in $C$ {\color{orange} ???form an unlink???},
 the number of arcs (non-closed curves) in $C$ is $t$, $t \ge 1$, the genus $g$ of $\S$ satisfies $n g + (n-1) (t-1)=g'$,  $\alpha_{\mathcal{M}, C,n}$ is well defined and $G'$ is isomorphic to the graph derived from the voltage graph $(G, \alpha_{\mathcal{M}, C,n})$ and voltage group $\mathbb{Z}_n$. 
\end{enumerate}

\end{prop}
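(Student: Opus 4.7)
The plan is to establish $(a) \Leftrightarrow (b)$ by interpreting the $n$-fold rotation as a branched cyclic cover $q : \S' \to \S$ and translating its monodromy data into the signed-intersection voltages $\alpha_{\mathcal{M}, C, n}$.

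For $(a) \Rightarrow (b)$, let $r$ be the given rotation, $\Gamma = \langle r \rangle \cong \mathbb{Z}_n$, and $\ell$ its axis. After a small equivariant PL perturbation we may assume $\ell$ meets $\S'$ transversely; since $\S'$ bounds a compact region of $\mathbb{R}^3$, the number of fixed points is even, say $2t$, and $t\ge1$ by hypothesis. Form the quotient $\S := \S'/\Gamma$, a closed connected orientable PL surface; then $q : \S' \to \S$ is a branched $n$-fold cover totally ramified over the $2t$ images of fixed points, so Riemann--Hurwitz gives $2 - 2g' = n(2 - 2g) - 2t(n-1)$, equivalently $g' = ng + (n-1)(t-1)$. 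Because no vertex of $G'$ is fixed, the quotient graph $G := G'/\Gamma$ and its embedding $\mathcal{M} := q \circ \mathcal{M}'$ are well defined. I would then pair the $2t$ branch points arbitrarily and join each pair by a simple PL arc in $\S$, keeping the $t$ arcs pairwise disjoint and disjoint from $\mathcal{M}(G)$. If the restriction of $q$ to the complement of these arcs still has nontrivial monodromy, I would augment $C$ with finitely many pairwise disjoint simple closed PL curves, disjoint from the arcs and $\mathcal{M}(G)$, whose homology classes generate that remaining monodromy; afterwards $q$ restricts to a trivial $\mathbb{Z}_n$-cover over $\S \setminus C$. A standard covering-space computation then shows that $\alpha_{\mathcal{M}, C, n}(e)$ is exactly the sheet shift of the lift of $e$, so $G'$ is the derived graph.

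For $(b) \Rightarrow (a)$, I would construct $\S'$ abstractly by taking $n$ copies $\S^{(0)}, \dots, \S^{(n-1)}$ of $\S$ cut along $C$ and gluing, for each $c \in C$, the positive bank of $c$ in $\S^{(i)}$ to the negative bank of $c$ in $\S^{(i+1)}$ (indices mod $n$). The result carries a natural $\mathbb{Z}_n$-action; either by direct Euler-characteristic bookkeeping or by Riemann--Hurwitz applied to the associated branched cover $\S' \to \S$ with $2t$ branch points at the arc endpoints, its genus is $g' = ng + (n-1)(t-1)$. Lifting $\mathcal{M}$ sheet by sheet yields an embedding of the derived graph in $\S'$. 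To realize $\S'$ geometrically, fix an axis $\ell \subset \mathbb{R}^3$ and the rotation $r$ by $2\pi/n$ about it, together with a closed wedge $W$ of angular width $2\pi/n$ bounded by two half-planes through $\ell$. I would PL-embed $\S$ cut along $C$ into $W$ so that the two banks of each $c \in C$ land on the two bounding half-planes of $W$ and the $2t$ endpoints of arcs lie on $\ell$; then $\bigcup_{i=0}^{n-1} r^i(\text{image})$ is a PL embedding of $\S'$ into $\mathbb{R}^3$ with the required $n$-fold symmetry, and the graph embedding inherits it.

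The main obstacle is the PL embedding into the wedge $W$ in $(b) \Rightarrow (a)$: one must match the two sides of every cut precisely with the bounding half-planes and pin the arc-endpoints to $\ell$, all while keeping $\mathcal{M}(G)$ in the interior of $W$. I would handle this by first choosing a PL collar neighborhood of $C$ in $\S$, embedding the collar into a small neighborhood of $\ell$ in $W$ in a prescribed way, and then extending to the rest of $\S \setminus C$ using the fact that any compact orientable PL surface with boundary admits a PL embedding into a half-space with any prescribed PL embedding on its boundary. The two-way verification that $\alpha_{\mathcal{M}, C, n}$ correctly records the monodromy is then routine, since signed intersection numbers with a cut system compute the class in $H_1(\S \setminus C; \mathbb{Z}_n)$, which for a $\mathbb{Z}_n$-cover coincides with the sheet shift.
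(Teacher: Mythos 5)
Your overall strategy — viewing the rotation as a branched $\mathbb{Z}_n$-cover $q:\S'\to\S$, applying Riemann--Hurwitz, and encoding the monodromy through signed intersections with a cut system $C$ — is the same as the paper's. However, two of the steps you sketch contain genuine gaps.

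In $(a)\Rightarrow(b)$ you propose to pair the $2t$ branch points \emph{arbitrarily} and join each pair by a PL arc. This does not in general produce a valid cut system. The local monodromy at a branch point is a generator of $\mathbb{Z}_n$, and the points where the axis $\ell$ enters and exits the compact region bounded by $\S'$ carry \emph{opposite} local monodromies. For an arc $c$ joining two branch points to contribute a well-defined sheet shift at every crossing (which is what $\sigma(\cdot,c)$ must record), the monodromy around the boundary of a tubular neighbourhood $N$ of $c$ must vanish, i.e.\ the two endpoints must be of opposite type. If they are of the same type, crossing $c$ near one endpoint shifts sheets by $+1$ and near the other by $-1$, so the signed-crossing count cannot equal the sheet shift. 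Adding auxiliary closed curves cannot repair this: the natural candidate $\partial N(c)$ meets a transversal arc in two oppositely signed points and contributes $0$ to $\sigma$. The paper avoids this problem by construction: it takes $C$ to be (the image in $\S$ of) $X_0 = P_0\cap\S'$ for a half-plane $P_0$ through the axis, so the arcs automatically connect branch points of opposite type and the cover is trivial over each wedge sector $\S_j=\Pi_j\cap\S'$.

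You also require the arcs and the added closed curves to be disjoint from $\mathcal{M}(G)$. But $\mathcal{M}(G)$, as defined in the paper, includes the edge images; if $C$ is disjoint from every edge image then $\alpha_{\mathcal{M},C,n}\equiv 0$ and the derived graph is $n$ disjoint copies of $G$, not $G'$. What is needed is that $C$ be disjoint from $\mathcal{M}(V(G))$ and from the branch points except as arc endpoints, while meeting each edge image in finitely many transversal points. Finally, in $(b)\Rightarrow(a)$ the embedding fact you invoke (``any compact orientable PL surface with boundary embeds in a half-space with any prescribed PL boundary embedding'') is not a standard citation and is not quite what is needed, since the two banks of each curve in $C$ must be placed on two distinct half-planes of a wedge with the $2t$ arc endpoints pinned to the axis. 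The paper handles this by an explicit construction, attaching cylindrical tubes that carry the boundary cycles of the cut surface to small disks and half-disks placed on $P_0$ and $P_1$. Your ``collar and extension'' idea can likely be turned into such a construction, but as written it is an appeal to a fact that needs proof.
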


For a simple example with $G'$ a complete bipartite graph $K_{3,3}$ and $G$ a dipole $D_3$ see Figure~\ref{fig.sym3}. %We note that it is possible to drop the requirement in Proposition~\ref{prop.cuts} that no vertex is mapped to a fixed point. To do this, an extension of the voltage graph construction would be necessary.
In the special case when $\mathcal{M}$ is 2-cell, $\mathcal{M}'$ is just the derived embedding from the embedded voltage graph  $(G, \mathcal{M}, \S, \alpha_{\mathcal{M}, C,n})$ realised in $\mathbb{R}^3$ and voltage group $\Gamma=\mathbb{Z}_n$.
When $\S'$ has no fixed points under the rotation, our proof below shows that (a) still implies (b) with $t=0$, however in the opposite direction we can end up with a disconnected surface $\S'$ even if $\S$ is connected.

%\m{Can we drop the requirement that vertices are not fixed points by extending the voltage graph construction?}
\bigskip

\begin{proof}
%Let $c$ be a boundary cycle of an oriented surface $\S \subset \mathbb{R}^3$ with boundary. We will treat 
In this proof we only deal with oriented surfaces $\S \subset \mathbb{R}^3$ where each boundary component can be represented by a
closed curve $c$ on $\S$. An arbitrary point $x \in c$ can be chosen as both the start and the end point. $c$ can be given two possible directions. The intersection of $c$ with a sufficiently small neighbourhood around $x$ (which is homeomorphic to a half-disk) in $\S$ can be represented by a segment $c_{out}$ outgoing from $x$ and a segment $c_{in}$ incoming to $x$. The clockwise ordering of these segments at $x$ does not depend on $x$. If the ordering is $(c_{out}, c_{in})$ we say that \emph{the points of $\S$ are on the right of $c$}. Otherwise, if the ordering is $(c_{in}, c_{out})$, we say that \emph{the points of $\S$ are on the left} of $c$. 

% The direction where the points of $\S$ are on the \emph{right} of $c$ is the one  the counter-clockwise walk
%  along the boundary of a sufficiently small half disk around each point $x \in c$.

\textit {Proof of $(a)\!\implies\!(b)$.}
Let $a$ be the $n$-fold rotation symmetry axis of $\mathcal{M}'$.
%Let $I$ and $a$ be as in Proposition~\ref{prop.rotation}. 
Let $r: \mathbb{R}^3 \to \mathbb{R}^3$ be the rotation around $a$ by $2 \pi / n$. 
Fix any $n$ half-planes $P_0, \dots, P_{n-1}$
containing $a$ as their common boundary, such that $r(P_j) = P_{ (j+1) \bmod n}$. Denote $X_j=P_j \cap \S'$.
$P_0, \dots, P_{n-1}$ make up the boundaries of $n$ infinite prisms $\Pi_0, \dots, \Pi_{n-1}$ with pairwise disjoint interiors, where 
$\Pi_j$ is bounded by (and includes the points of) $P_j$ and $P_{ (j+1) \bmod n}$.
%Let $\Pi_j$ be the infinite prism 
%between $P_j$ and $P_{ (j+1) \bmod n}$, 
%i.e. the set of all points obtained by rotating $P_j$ by an angle $\phi$  around $a$ for each $\phi \in [0, 2 \pi /n]$.
Let $\S_j = \Pi_j \cap \S'$. 

Since $\mathcal{M}'$ is piecewise linear and $\S'$ is a surface,
%\begin{prop}
we can choose $P_0, \dots, P_{n-1}$ so that
    \begin{itemize}
       % \item $S_j$ contains one black and one white vertex.
       \item For $j \in \{0, \dots, n-1\}$, $X_j$ is a union of a finite number of (images of) pairwise disjoint simple curves in $\S_j$ (and $\S'$). We assume the direction
       of each curve $c \in X_j$ is chosen so that the points of $\S_j$ are on the right of $c$. 
       %{\color{orange} Each curve can be assigned direction, so that the points ``inside'' $\S'$ are on its ``left''.}
       %\m{Apibrėžti kreivių kryptį: paviršiaus ,,vidų`` atitinkantys taškai visada ,,kairėje`` pusėje.} 
       \item Each arc in $X_0$ starts and ends in $a$, and each curve in $X_0$ is internally
       disjoint from $a$. %closed curve in $X_0$ is disjoint from $a$.
       Define $B$ by $B=\S\cap a = X_0 \cap a$. Then $|B|$ is even and $|B|=2t$ where $t$ is the number of arcs  in $X_0$.

        \item The curves in $X_0$ are internally disjoint from $\mathcal{M}'(V(G'))$.
        %\item The endpoints of each curve in $X_0$ are disjoint from $\mathcal{M}'(G')$.
 
        \item For each $e \in E(G')$, the curve $\mathcal{M'}(e)$ shares with $X_0$ a finite number of points.
        
        %No image under $\mathcal{M}$ of a vertex of $G$ lies on $X_0 \setminus a$, the image
        %of each edge of $G$ intersects $X_0$ a finite number of times.
  \end{itemize}

 %With this choice of $P_0, \dots, P_{n-1}$ the following holds.

Now for each $e \in E(G')$, the curve $c_e=\mathcal{M}'(e)$  is disjoint from $B$.
Indeed, suppose on the contrary that $a$ and $\mathcal{M}'(e)$ contain
a point $x$ in common. Then $x$ must belong to the interior of $c_e$, since $\mathcal{M}'(V(G'))$ contains no fixed points. If $c_e$ is tangent to $a$ at $x$,
then, by considering the orbit (under the rotation) of the region incident to $e$, we see that since $n\ge 3$, $\S$ is not locally homeomorphic to the Euclidean plane at $x$ (see, e.g., proof of Theorem 78.1 of \cite{munkres}). Otherwise, by considering the orbit of $c_e$ in the neighbourhood of $x$, we see that $\mathcal{M}'$ is not a proper embedding, a contradiction.

% \begin{itemize}
         %\item 
         Identifying each point $x \in X_0$ with $r(x) \in X_1$ we obtain from $\S_0$ a quotient surface $\S$.        %This surface can be easily embedded in 
        $\S$ can be represented as a piecewise linear embedding in $\mathbb{R}^3$, for example,
        as the image of
        $\S_0$
        under a piecewise linear approximation $\tilde{f}$ of a function $f: \Pi_0 \to \mathbb{R}^3$, where $f$ rotates $x$ around $a$ by angle $n \theta$ if $x$ is at angle $\theta$
        with $P_0$. % Note that $f$ is a bijection except it maps both $P_0$ and $P_1$ to $P_0$.
         We further assume that $\S = \tilde{f}(\S_0)$.
         %, and the embedding $\mathcal{M}$ is defined accordingly.
         Note that $C = \tilde{f}(X_0) := \{\tilde{f} \circ c : c \in X_0\}$ is then a finite set of arcs (we call them \emph{cuts}) and 
        closed curves (we call them \emph{cut loops}) on $\S$. 
        %\item 
        
        Let $p$ map $x \in \mathbb{R}^3$ to the unique point $p(x) \in \Pi_0 \setminus P_1$ in the orbit of $x$ under the $n$-fold rotation and let $h = \tilde{f} \circ p$. Restricted to $\S'$, $h$ is a branched covering, the covered surface is $\S$ and $B$ is the set of branch points, each of order $n$. It is easy to see that $\S$ is  closed and orientable. Since $\S'$ contains a fixed point under the rotation, it follows that $\S$ is connected. 

Since $\mathcal{M}'(V(G'))$ avoids fixed points,  we have that this set is mapped to a set $V$ of size $|V(G')| / n$ by $h$.

For any $e \in E(G')$ $\mathcal{M}'(e)$ can be considered a simple curve, so we can define
a function $m_e$ by $m_e(x) = h(\mathcal{M}'(e)(x))$  $x \in [0,1]$. Since $\mathcal{M}'(e)$ avoids $B$,
$m_e$ is a simple curve.

%consider $m_e = h(\mathcal{M}'(e))$. Since $\mathcal{M}'(e)$ avoids $B$, $h$ is a local homeomorphism for each $x \in m_e$. It follows that $m_e$ is an arc with endpoints $h(s), h(t) \in V$ where $s$ and $t$ are the endpoints of $\mathcal{M}'(e)$. Choose the direction of $m_e$ arbitrarily, so
%that $m_e$ goes, say, from $h(s)$ to $h(t)$, and let the direction of $\mathcal{M}'(e)$ be defined by setting $m_e  = h \circ \mathcal{M}'(e)$.

As $n \ge 3$, the orbit of $\mathcal{M}'(e)$ under the rotation corresponds to $n$ distinct edges of $G'$, all of which are mapped to $m_e$ by $h$ (switch the direction of each such $\mathcal{M}'(f)$  to match the direction of $m_e$). Thus the images $m_e$ for $e \in E(G)$ define an embedding $\mathcal{M}$ of a connected directed graph $G$ with $|V(G')|/n$ vertices mapped to $V$ and with $|E(G)|/n$ edges into $\S$.

Now for any $e \in E(G')$ with $e=st$ and $m_e$ going from $h(s)$ to $h(t)$ follow the curve $\mathcal{M}'(e)$ from $s$ to $t$. By the definition of $\sigma$, for any $i \in \mathbb{Z}_n$, $s \in \{-1,1\}$ we cross from $\S_i$ to $\S_{ (i+s) \bmod\,n}$ at a point $x \in \S'$ if and only if $\sigma(h(x), m_e, h \circ c) = s$, where $c$ is
the curve of $X_i$ containing $x$. Summing over all points of intersection of $m_e$ with $X_0$ in $\S$, we get that an edge $e \in E(G')$ with $\mathcal{M}'(e)$ an arc from $r^{\beta}(u)$ to $r^{\gamma}(v)$ corresponds to an arc $m_e$ from $u$ to $v$ in $G$ such that $\gamma = \beta + \sigma(m_e, C) \bmod n$. 
So $G'$ is the graph derived from $(G, \alpha_{\mathcal{M}, C, n})$ and the voltage group $\mathbb{Z}_n$. 

%{\color{orange}
        %\item 
        %Let $h: \S' \to \S$ be the covering projection. 
%}        
        %\item 
Since $h$ has $2t$ branch points of order $n$, by the Riemann-Hurwitz formula, see, e.g., \cite{tucker2014},
%written with mistake in undine
the genus $g$ of $\S$ satisfies $g' = ng + (n-1) (t-1)$.

%    \end{itemize}

\medskip

  \textit{Proof of $(b)\!\implies\!(a)$.}
  %Let $\tilde{\S}$ be the surface obtained by repeatedly cutting
  Cut $\S$ at each $c \in C$.
  %, $\tilde{\S} = \S \setminus \left(\cup_{c \in C} c\right)$. 
  We obtain a possibly disconnected surface $\tilde{S}$ with boundary, where each closed curve $c$ corresponds to two boundary components
  and each arc in $C$ to one boundary component.
  More precisely, by the surface classification theorem (e.g., \cite{munkres}), there exists a piecewise linear map $h: \mathbb{R}^3 \to \mathbb{R}^3$ and a piecewise linear surface $\tilde{\S}$ with boundary
  such that $h(\tilde{\S}) = \S$, restricted to 
  $\tilde{\S} \setminus \cup_{c \in C} c$, $h$ is a homeomorphism and $\tilde{\S}$ is a sphere with some number $a \le g$ of simple handles and $b=2(|C|-t) + t = 2|C| - t$ boundary components.

  Fix an orientation of $\tilde{\S}$ which agrees with the orientation of $\S$ in the sense that rotations of any embedded graph are preserved
  under $h$. The direction of a curve $c \in C$ induces the direction of the boundary cycles of $\tilde{\S}$: for a closed curve we 
  obtain one boundary cycle $c_L$, $h(c_L) = c$ where the points of $\tilde{\S}$ are on the right of $c_L$.
  Similarly we have another cycle $c_R'$, $h(c_R) = c$, where points of $\tilde{\S}$ are on the left of $c_R$. % (the direction of counter-clockwise walks along the boundary of a small disk centered on $x \in c_R$ agrees with the direction of $c_R$).
  For an arc $c$ we have only one boundary cycle $c'$ in $\tilde{\S}$ such that $h(c') = c$. We assume that $c'$ is 
  formed from two arcs $c_L'$ and $c_R'$ which have a common start point (mapped to the start point of $c$ by $h$) and a common endpoint (mapped to the endpoint of $c$ by $h$), i.e. $c' = c_{R}' * (c_{L}')^{-1}$, where $*$ denotes the concatenation of two curves and ${}^{-1}$ denotes inversion of the direction.

  We can carry out the following simple modification of $\tilde{\S}$. First take a half-plane $P_0$ in $\mathbb{R}^3$ with boundary, say, the $z$ axis. Let $r$ be the rotation around $z$ by $2\pi/n$. On $P_0$ place $|C|-t$ disks disjoint from $z$ and $t$ half-disks centered at the $z$ axis, all pairwise disjoint. For each of the $|C|-t$ closed curves $c \in C$ connect the $i$th boundary cycle $c_L$ using a cylindrical tube, disjoint from the rest of the surface, to the boundary $B_i$ of the $i$th disk on $P_0$ and similarly connect $c_R$ to $r(B_i)$ on the half plane $P_1=r(P_0)$. For $j$th arc $c \in C$, $j \in \{1, \dots, t\}$, use a cylindrical tube to connect $c'$ with the closed curve formed from the boundary $B_j'$ of the $j$th half disk on $P_0$ and its rotated image $r(B_j')$. It is easy to ensure the added parts are piecewise linear and the modified
  surface is contained in the infinite prism bounded by $P_0$ and $P_1$.
   So we can further assume that $\tilde{\S}$ has all boundary cycles on $P_0$ and $P_1$ as described above and
  for arcs $c'$, we have that $c_L'$ lies on $P_0$ and $c_R'$ lies on $P_1$, so that the start point and the endpoint of $c$ is mapped
      to the point with the smaller and the larger $z$-value in $B(i) \cap r(B_i)$ respectively.

Let $\tilde{\S}_0 = \tilde{\S}$, $\tilde{\S}_i = r(\tilde{\S}_{i-1})$, for $i =1, \dots, n-1$. Note that the image of
the curve $c_L^{i}$ (the copy of $c_L$ in $\tilde{\S}_i$) coincides with the image of $c_R^{i-1}$ (the copy of $c_R$ in $\tilde{\S}_{i-1}$), for $i \in \mathbb{Z}_n$.
%By our construction the
%directions of the curves agree, and t
This yields a surface $\S'$, invariant under $r$, such that $\S$ is its (branched) covering surface  embedded in $\mathbb{R}^3$.
Note that $\S'$ is connected since $t \ge 1$, and the corresponding branched cover has at least two branch points.

Similarly by adding $h^{-1}(\mathcal{M}(e))$ for $e \in E(G)$ and then taking the orbits of the added points under the group generated by $r$ we get a graph $G'$ embedded in $\S'$, on $n |V(G)|$ vertices and with $n|E(G)|$ edges, such that $\mathcal{M}(G)$ is the quotient space of  $\mathcal{M}'(G')$ with respect to the $n$-fold rotation. To see that $G'$ is isomorphic to the voltage graph derived from $G$, go along an arc $y=\mathcal{M}'(e)$ for $e \in E(G)$, and note that for any $i, s \in \mathbb{Z}_n$, $s\in \{-1,1\}$, the curve  $y$ passes from $\tilde{\S}_i$ to $\tilde{\S}_{(i+s)\bmod n}$ at a point $x$ if and only if $\sigma(x, y, c) = s$.

Now the claim about the genus of $\S'$ follows from the Riemann-Hurwitz formula as above.
\end{proof}

%\begin{lemma}
\begin{prop}\label{prop.rotation}
    Let $\mathcal{M}$ be a piecewise linear embedding of a graph $G$ into a closed connected orientable surface $\S \subset \mathbb{R}^3$ such that there is a face $F_H$ bounded by a Hamiltonian cycle. Suppose $n=|V(G)| \ge 3$ and 
    $\mathcal{M}$ has n-fold rotational symmetry that leaves the boundary of $F_H$ invariant. 
    %Suppose $\mathcal{M}$ is an embedding of $G$ into a closed connected orientable surface $\S \subseteq \mathbb{R}^3$.
    %Let $I$ be a 3-dimensional n-way interchange  $(G,H,$ $\mathcal{M}, \S)$ with $n$-fold rotational symmetry. 
    Then no point in $\mathcal{M}(V(G))$ lies on the rotational symmetry axis $a$.
\end{prop}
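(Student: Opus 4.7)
\medskip

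The plan is to argue by contradiction: suppose some vertex $v\in V(G)$ has $p:=\M(v)\in a$. Let $r$ denote the rotation by $2\pi/n$ about $a$; by hypothesis $r\in\IG(\M)$. Since $r$ fixes $p$ and $\M$ is injective on $V(G)$, we get $r(v)=v$, and since $r$ preserves $\partial F_H$ as a set it permutes the two edges $e^-,e^+$ of $H$ incident to $v$.

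I would then analyse $r$ locally at $p$. Take $\epsilon>0$ small enough that the sphere $S_\epsilon$ of radius $\epsilon$ centred at $p$ meets $\S$ in a single PL-circle $L$ (the link of $p$ in $\S$), and each of $e^-,e^+$ meets $L$ in a unique point $q^-,q^+$; let $p^\pm:=a\cap S_\epsilon$. The first claim is that $\S\cap a$ is discrete near $p$, so that $p^\pm\notin L$. Otherwise $\S$ would contain a non-trivial segment $\sigma$ of $a$ through $p$; picking a PL-regular point $p'\in\sigma$, the tangent plane $T_{p'}\S$ would contain $a$, and since $r(p')=p'$ and $r(\S)=\S$, $T_{p'}\S$ would be $r$-invariant. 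For $n\ge 3$, however, no plane through $a$ is mapped to itself by rotation by $2\pi/n$, a contradiction.

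With $p^\pm\notin L$, the map $r|_L$ is an orientation-preserving PL-homeomorphism of a circle with no fixed points, because any fixed point on $L$ would be a fixed point of $r$ in $\mathbb{R}^3$, hence on $a\cap L=\emptyset$. Since $r$ permutes $\{e^-,e^+\}$ it permutes $\{q^-,q^+\}\subset L$. If $r(q^+)=q^+$ we contradict the absence of fixed points on $L$. Otherwise $r$ swaps $q^-$ and $q^+$, so $\{q^-,q^+\}$ is an orbit of size $2$ under $\langle r\rangle$, forcing $n$ to be even and $r^2(q^+)=q^+$. But for even $n\ge 4$, $r^2$ is rotation by $4\pi/n$, still a nontrivial rotation of $\mathbb{R}^3$ whose fixed set is exactly $a$; hence $r^2|_L$ also has no fixed points, a final contradiction.

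The main technical hurdle lies in the piecewise-linear set-up: verifying that for $\epsilon$ sufficiently small the link $L=S_\epsilon\cap\S$ is indeed a single PL-circle crossed once by each $e^\pm$, and formalising the tangent-plane step that rules out $a\subseteq\S$ locally. Both are standard local properties of PL $2$-manifolds in $\mathbb{R}^3$, but they need to be stated with some care.
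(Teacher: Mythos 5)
Your proof is correct and reaches the same conclusion by a related but more local argument than the paper. The paper's proof simply takes one boundary edge $e$ of $H$ at $v$ and considers the orbit of $c_e=\M(e)$ under the rotation, asserting that this yields $n$ distinct edges of $\partial F_H$ incident to $v$, contradicting that a Hamiltonian cycle meets $v$ in only two edges. That assertion implicitly requires knowing the orbit has full size $n$; the degenerate cases (orbit of size $1$ or $2$, i.e.\ some power of $r$ fixes $c_e$) are not spelled out but reduce, as in your argument, to showing that $\S$ cannot contain a segment of the axis $a$ when $n\geq 3$. Your version replaces this orbit count by examining the action of $r$ on the link circle $L$ of $p$ in $\S$: you first establish that $\S\cap a$ is discrete near $p$ (same tangent-plane obstruction), conclude $r|_L$ is fixed-point free, and then obtain the contradiction from the fact that $r$ permutes the two points $q^\pm$ where the $H$-edges meet $L$ — handling explicitly the case where $r$ swaps them by passing to $r^2$. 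So the two arguments exploit the same two facts ($H$ meets $v$ in exactly two edges; $a$ cannot lie locally in $\S$ for $n\geq3$), but yours is phrased through the link and is more careful about the small-orbit cases, which the paper's one-line proof leaves to the reader. No gap.
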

%{\color{orange}
\begin{proof}
%
%
%%actually the automorphism maps the whole face to itself.
%
%%By the fixed point theorem, the isometry that maps a compact surface $\S$ to itself, must have a fixed point (which does
%%not necessarily belong to $\S$. The only isometry groups in $\mathbb{R}^3$ that have a fixed point and order at least 3 are rotation about an axis by angle $\frac {2 \pi} / k$, $k \ge 3$ and rotation about an axis by an angle at least $\frac {2 \pi} / k$
%
%%\dots
%%By definition we require that a symmetry group of $I$ fixes a face $f_H$ bounded by a Hamiltonian cycle.
%%$n$-fold rotation group fixes exactly one point in the interior of $f_H$
%
%
%
%%\dots
%
%%Let us now show that no point $x \in \mathcal{M}(G)$ lies on the rotation axis $a$.
%%Fix arbitrary orientations of the edges of $G$ (so that we can identify their images with curves).
%
If $x=\mathcal{M}(v)$ is a fixed point, then consider the orbit of the curve $c_e = \mathcal{M}(e)$ such that $e$ is on 
the boundary of $F_H$.
%the face $f_H$ bounded by the Hamiltonian cycle $H$ whose boundary is invariant under the rotation.
 We get that each of $n$ edges incident to $v$ belongs to the boundary of $H$. This is a contradiction, since $H$ contains $v$ only once.
\end{proof}
%}

\subsection{Proof of the lower bound: the combinatorial part}
\label{sec.geom.lower2}

Recall that a \emph{region} of an embedding $\mathcal{M}$ of $G$ to $\S$ consists of all elements
of an equivalence class of points that can be connected by a simple arc in
$\S \setminus \mathcal{M}(G)$. A face is a region that is homeomorphic to an open disk in $\mathbb{R}^2$.
We call the sum of lengths of all boundary walks of a region $R$ the (boundary) \emph{size} of $R$.
We call $R$ a \emph{$k$-region}, if its size is $k$. 
\begin{lemma}\label{lem.geometric_lower}
Let $n\ge 3$ be an integer. %satisfy $n \bmod 4 \in \{0,3\}$.
 Let $\mathcal{M}$ be a piecewise linear embedding of $D_n$ into a closed
connected oriented surface $\S \subset \mathbb{R}^3$ of genus $g$. Let $C$ be a finite set of pairwise disjoint simple curves in $\S$, $t$ of which are arcs, and $|C|-t$ of which are closed curves. 
Suppose $t \ge 1$ and the voltage function $\alpha_{\mathcal{M}, C, n}: E(D_n) \to \mathbb{Z}_n$ (\ref{eq.cutvolt}) is well defined and  bijective. 

Then $n g + (n-1) (t-1) \ge \Lgeom(n)$. 
\end{lemma}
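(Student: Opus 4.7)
The plan is to invoke Proposition~\ref{prop.cuts} to translate the hypothesis into a piecewise linear $n$-fold symmetric embedding $\mathcal{M}'$ of $K_{n,n}$ into a closed connected orientable surface $\S' \subset \mathbb{R}^3$ of genus $g' = ng + (n-1)(t-1)$, for which $F_H$ is invariant under the rotation. The goal then becomes to show $g' \ge \Lgeom(n)$ by a face-orbit analysis on $\S'$.

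I would first classify the face orbits of $\mathcal{M}'$ under the rotation $r$. The key geometric observation is that every non-identity power of $r$ has the same fixed point set on $\S'$ as $r$ itself --- namely the $2t$ points where the rotation axis meets $\S'$ --- since an isometry of $\mathbb{R}^3$ fixing an open arc of a line must be the identity. Combined with the classical Brouwer/Ker\'{e}kj\'{a}rt\'{o} fact that a finite cyclic action on a topological disk has a unique fixed point, this forces every face orbit of $\mathcal{M}'$ to have size $1$ or $n$, with the $2t$ rotation-fixed points in bijection with the fixed faces. (Non-disk lifted regions would only increase $g'$ via their Euler characteristic contribution, so the 2-cell case provides the tightest lower bound.) For each fixed face $F$ the boundary walk projects down to a closed walk in the bipartite base graph $D_n$, so $|\partial F|$ must be divisible by $2n$; free-orbit faces of $\mathcal{M}'$ have size at least $4$ since $K_{n,n}$ has girth $4$.

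Writing $F' = 2t + nk$ with $k$ the number of free face orbits, the identity $\sum_F |\partial F| = 2n^2$ combined with the size lower bounds ($2n$ for $F_H$, $2n$ for each other fixed face, $4$ for each free-orbit face) gives $t + k \le \lfloor n/2 \rfloor$. Via the Euler relation $k = n - 2g - 2t$ this becomes $g \ge \lceil (n - 2t)/4 \rceil$ for $n$ even, and $g \ge \lceil (n + 1 - 2t)/4 \rceil$ for $n$ odd. Substituting into $g' = ng + (n-1)(t-1)$ and minimising over integer $t \ge 1$, a brief case analysis by $n \bmod 4$ shows the optimum is attained at $t = 1$ for $n \equiv 1, 2 \pmod 4$ and at $t = 2$ for $n \equiv 0, 3 \pmod 4$, in each case matching $\Lgeom(n)$ exactly.

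The hard part will be the face-orbit classification: verifying that all non-identity powers of $r$ share the $2t$-point fixed set with $r$, and that each disk fixed face contains exactly one rotation-fixed point, so that the fixed-face count equals $2t$. These claims rest on the rigidity of Euclidean isometries combined with Ker\'{e}kj\'{a}rt\'{o}'s theorem on periodic actions on disks. A secondary technical point is the divisibility $2n \mid |\partial F|$ for each fixed face $F$, coming from the bipartite projection to $D_n$; this is precisely the local constraint that goes beyond the combinatorial bound of Theorem~\ref{thm.topsym}. Once these structural statements are in place, what remains is essentially a mechanical integer optimisation.
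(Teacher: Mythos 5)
Your proposal lifts to the cover $\S'$ and counts face orbits there, whereas the paper works entirely in the base: it shows that each $2$-region of $\mathcal{M}$ (embedding of $D_n$ into $\S$) must contain an endpoint of a cut arc, because otherwise the two bounding edges would receive the same voltage and $\alpha$ would fail to be injective. That yields (number of $2$-regions) $\le 2t$; combined with Euler's inequality and an average-degree estimate one gets (number of $2$-regions) $\ge n-4g$, and the rest is the same integer optimisation in $t$ that you carry out. Your ``each fixed disk face contains exactly one branch point, hence has size $\ge 2n$'' is the upstairs dual of the paper's ``each $2$-region downstairs absorbs an arc endpoint''; the arithmetic they feed into is identical. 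So the two proofs use the same combinatorial obstruction, just seen from opposite sides of the branched covering.

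There are, however, two things to repair. First, Lemma~\ref{lem.geometric_lower} does \emph{not} assume a Hamiltonian face: the only hypotheses are that $\alpha_{\mathcal{M},C,n}$ is well-defined and bijective and $t\ge 1$. Invoking Proposition~\ref{prop.cuts} is fine and gives you an $n$-fold symmetric embedding $\mathcal{M}'$ of $K_{n,n}$ of genus $ng+(n-1)(t-1)$, but there is no $F_H$ to speak of, and your size estimate should simply be ``$\ge 2n$ for each fixed face'' rather than listing $F_H$ separately. This is harmless to the count but is a wrong reading of the statement. Second, and more substantively, your argument quietly assumes that $\mathcal{M}$ (hence $\mathcal{M}'$) is a $2$-cell embedding: the formula $F'=2t+nk$, the identity $k=n-2g-2t$ via Euler, and the bijection between branch points and fixed disk faces all need the regions to be disks. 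The lemma only assumes a piecewise linear embedding, and a region of $\mathcal{M}$ may fail to be a face. The paper sidesteps this by never asserting an Euler \emph{equality}: it uses $r\ge n-2g$ for the number $r$ of regions, and the pigeonhole bound $k\le 2t$ does not care whether regions are disks. Your parenthetical ``non-disk lifted regions would only increase $g'$'' points in the right direction, but to make it rigorous you would need to argue the inequality versions throughout — e.g.\ that the number of fixed \emph{disk} faces is at most $2t$ and that Euler gives $m+k\ge n-2g$ rather than equality — and check the chain of inequalities still closes. Once those inequalities are written down the argument does close (and gives $g\ge (n-m)/4 \ge (n-2t)/4$ just as well), but as stated the cellular assumption is an unacknowledged gap; it is exactly the kind of issue the paper's formulation in terms of regions is designed to avoid.
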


\begin{proof}
Let $\alpha=\alpha_{\mathcal{M}, C, n}$.
Suppose the claim is false, and take $\mathcal{M}$ and $C$ such that $t \ge 1$, $\alpha$ is bijective and
%\begin{align}\label{eq.contr}
$n g + (n-1) (t-1) < \Lgeom(n)$. 
%\end{align}
Then since $(n-1)(t-1) \ge 0$ and $t,g,n$ and $\Lgeom(n)$ are integers %by Proposition~\ref{prop.cuts}
%checked 
\begin{align} \label{eq.g.assumed}
g \le \left \lfloor \frac {\Lgeom(n) -1} n \right\rfloor =  \begin{cases} 
\frac {n-4} 4 &\mbox{for }  n\equiv0\,(\bmod \, 4), \\
\frac {n-5} 4 &\mbox{for }  n\equiv1\,(\bmod \, 4), \\
\frac {n-6} 4 &\mbox{for }  n\equiv2\,(\bmod \, 4), \\
\frac {n-3} 4 &\mbox{for } n\equiv3\,(\bmod \, 4).
\end{cases}
\end{align}
%For $n=3$ we immediately have a contradiction, since genus must be non-negative.
%So we further assume $n \ge 4$.

%We first bound the number of 2-faces of the embedding from below. Suppose the embedding has
Since $D_n$ is bipartite, each region of $\mathcal{M}$ must have an even size.
Let $k$ be the total number of 2-regions.
$D_n$ has $n$ edges and $2$ vertices. It follows by Euler's formula that the number $r$ of regions is
\begin{align*}
   r \ge 2- 2g - 2 + n =n-2g.
\end{align*}
Assume first $k < n - 2g$. Each edge either belongs to the boundary walks of two regions or to the boundary walks of the same region twice. Thus, the average size of 
those regions that have size at least 4 is
%proof for non-2-cell embeddings breaks down here
\begin{align*}
\frac {2n - 2k} {r - k} \le \frac {2n - 2k} {n- 2g - k}. 
\end{align*}
On the other hand, this average must be at least 4. Thus
\begin{align}
2n-2k \ge 4 (n- 2g - k) \nonumber \\
2k \ge 2n - 8g \nonumber \\
k \ge n - 4g. \label{eq.k.lower}
\end{align}
The last inequality trivially holds also when $k \ge n-2g$.

By our initial assumption, the number of arcs in $C$ satisfies
\begin{align} \label{eq.t.upper}
 t<1 + \frac{\Lgeom(n) - n g} {n-1}.
\end{align}

Let $R$ be a 2-region with boundary consisting of arcs $b=\mathcal{M}(e_1)$ and $c=\mathcal{M}(e_2)$
from the white vertex to the black vertex. $e_1$ and $e_2$ must be different edges, since $n > 1$.
%{\color{orange} 
We use the following simple property: if a curve $a \in C$ does not have an endpoint in $R$, then $\sigma(\mathcal{M}, b, a)=\sigma(\mathcal{M}, c,a)$.
If $a$ has just one endpoint in $R$, then $|\sigma(\mathcal{M}, b, a) - \sigma(\mathcal{M}, c,a)| = 1$. This property is a simple consequence of the definition of a region, the definition of $\sigma$ and the fact that $\alpha$ is well-defined (recall that we work in the piecewise-linear setting). 
%}

Thus if no arc in $C$ has an endpoint in $R$, then  $\alpha(e_1) = \alpha(e_2)$, which contradicts the assumption that
$\alpha$ is bijective. This implies that each 2-region must have at least one of $2t$ total endpoints.
Since regions are pairwise disjoint sets, by the pigeonhole principle we must have $2t \ge k$.

In the case $n \equiv 3\, (\bmod\, 4)$, suppose $g = (n-3)/4$. Then (\ref{eq.t.upper}) implies
$t=1$, so there are just two endpoints, but by (\ref{eq.k.lower}) the number of 2-regions is at least $3$,
a contradiction. Thus we can further assume
\begin{align} \label{eq.g.assumed2}
g \le %\le \begin{cases} 
\frac {n-3} 4 - 1 & &\mbox{for } n\equiv3\,(\bmod \, 4).
%\end{cases}
\end{align}
From (\ref{eq.k.lower}) and  (\ref{eq.t.upper}) we have
\begin{align*}
n - 4g \le k \le 2t <  2 + \frac{2\Lgeom(n) - 2n g} {n-1}
\end{align*}
or 
\begin{align*}
&g > \frac{(n-2)(n-1) - 2\Lgeom(n)} {2n-4} = 
%\begin{cases}
%\frac n 4 - \frac{n-1} {n-2}, &\mbox{for } n = 0\, (\bmod\, 4), \\
%\frac {n^2 - 7n + 8} {4 (n-2)}, &\mbox{for } n= 3\,(\bmod\, 4).
%\end{cases} \\
%&=
\begin{cases}
\frac {n-4} 4, &\mbox{for } n = 0\, (\bmod\, 4), \\
\frac {n-5} 4 + \frac 1 2 - \frac 1 {2 (n-2)}, &\mbox{for } n = 1\, (\bmod\, 4), \\
\frac {n-6} 4 + 1, &\mbox{for } n = 2\, (\bmod\, 4), \\
\frac {n-3} 4 - \frac 1 2 - \frac 1 {2(n-2)}, &\mbox{for } n= 3\,(\bmod\, 4).
\end{cases} \\
\end{align*} %checked, needs no more checking
In all cases we have a contradiction to (\ref{eq.g.assumed}) and (\ref{eq.g.assumed2}). 
%In case $n= 3\,(\bmod\, 4)$ this is a contradiction to (\ref{eq.g.assumed}) and the proof is complete.
%
%For $n= 0\,(\bmod\, 4)$, the above bound together with (\ref{eq.g.assumed}) implies
%that $g= \frac n 4 - 1$. Thus by (\ref{eq.k.lower}) and (\ref{eq.t.upper}) $k \ge 4$ and 
%$t < 2 + \frac 1 {n-1}$. If $t =1$ or $k > 4$ and $t=2$ then at least one 2-face contains no endpoints,
%hence it must be $t=2$ and $k=4$, so that there are four faces containing one endpoint each.
\end{proof}

\subsection{Proof of the upper bound: optimal ring road constructions}
\label{sec.geom.upper}

We represent rotations of an embedding of a directed multigraph as cyclic sequences of symbols $e_{i_1}^{s_1} \dots e_{i_k}^{s_k}$. We have $s_j=-1$ if the edge $e_{i_j}$ is incoming and $s_j=1$ otherwise, also we shorten $e_i = e_i^1$. Similarly we denote faces of embedded directed multigraphs as cyclic permutations of the same form. In this case the symbol $e^{-1}$ indicates that the corresponding
arc $\mathcal{M}(e)$ in the counter-clockwise facial walk is traversed in the opposite direction.

 For a degree 2 vertex $x$ in a multigraph
\emph{smoothing} $x$ means connecting the neighbours of $x$ by an edge and removing $x$; this is extended to embedded or directed multigraphs in the obvious way.

\begin{lemma} \label{lem.geomupper}
    For each integer $n \ge 2$ there is a 3-dimensional piecewise linear complete interchange
    that has $n$-fold rotational symmetry and genus at most $\Lgeom(n)$.
\end{lemma}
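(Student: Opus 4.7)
The plan is to apply Proposition~\ref{prop.cuts} in the direction (b)$\Rightarrow$(a), reducing the lemma (for $n\ne 4$) to the following combinatorial task: exhibit a piecewise linear embedding $\M$ of the dipole $D_n$ into an orientable surface $\S\subset\mathbb{R}^3$ of genus $g$ together with a set $C$ of $t\ge 1$ pairwise disjoint simple arcs on $\S$ such that the voltage assignment $\alpha_{\M,C,n}$ is a bijection $E(D_n)\to\Z_n$ (so the derived multigraph is $K_{n,n}$), the derived embedding has a Hamiltonian face, and $ng+(n-1)(t-1)=\Lgeom(n)$. The exceptional case $n=4$ is handled separately by exhibiting the Pinavia interchange of Figure~\ref{fig.pinavia_embedding}, which achieves genus $4$.

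Geometrically I would realise $\S$ as a single \emph{block}: a planar disk carrying $g$ simple handles, placed inside an infinite wedge of $\mathbb{R}^3$ bounded by two vertical half-planes that meet along the intended rotation axis $a$; the arcs in $C$ are laid along these half-planes, exactly as in the proof of Proposition~\ref{prop.cuts}(b)$\Rightarrow$(a). Rotating the block $n$ times about $a$ and gluing along the arcs produces a piecewise linear surface $\S'\subset\mathbb{R}^3$ of genus $ng+(n-1)(t-1)$ with $n$-fold rotational symmetry, and the derived embedding is the intended complete interchange: each rotated copy of the block is one motorway, the $g$ handles inside the block model simple bridges of that motorway, and the $t$ arcs become the lanes shared between consecutive blocks around the ring.

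The parameters are chosen by residue modulo $4$: for $n\equiv 1$, take $t=1$ and $g=(n-1)/4$; for $n\equiv 2$, take $t=1$ and $g=(n-2)/4$; for $n\equiv 3$, take $t=2$ and $g=(n-3)/4$; and for $n\equiv 0$ with $n>4$, take $t=2$ and $g=(n-4)/4$. A routine substitution gives $ng+(n-1)(t-1)=\Lgeom(n)$ in each case. In the first two cases the $n$ voltages can be arranged as the arithmetic progression $0,1,\dots,n-1$ modulo $n$, so every inter-block lane crosses the cut counter-clockwise; in the last two cases the arithmetic forces some voltages to appear with opposite signs, which I realise geometrically by replacing one simple handle with a \emph{star-shaped bridge} serving several motorways and by letting a few inter-block lanes run clockwise across the cuts --- the ``twisted'' ring road advertised in the introduction.

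The main obstacle, and the only part requiring real work beyond Proposition~\ref{prop.cuts} and the parameter arithmetic, is to specify, for each residue class, the rotation system of $D_n$ inside the block and the cyclic order in which the arc endpoints meet the cuts so that simultaneously (a) the $n$ voltages are pairwise distinct in $\Z_n$ and (b) the face of the derived embedding generated by a distinguished $2$-face of the base is bounded by a Hamiltonian cycle of $K_{n,n}$. This amounts to a finite verification of the transition-graph type used in Section~\ref{sec.combinatorial}, carried out by tracing facial walks of the derived embedding; once performed (with a figure per case for clarity), piecewise linearity and $n$-fold rotational symmetry of the 3-dimensional embedding are automatic from the wedge construction.
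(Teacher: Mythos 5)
Your high-level strategy is the same as the paper's: reduce to exhibiting a base embedding of $D_n$ with cuts so that Proposition~\ref{prop.cuts}(b)$\Rightarrow$(a) yields the rotationally symmetric realisation in $\mathbb{R}^3$, choose $(g,t)$ per residue class by the arithmetic you give (which is correct: $ng+(n-1)(t-1)=\Lgeom(n)$ with $(g,t)=\big((n-1)/4,1\big),\big((n-2)/4,1\big),\big((n-3)/4,2\big),\big((n-4)/4,2\big)$ respectively), and treat $n=4$ as an exception. That much is right and is essentially how the paper organises things.

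The gap is that the entire content of the lemma is the construction you defer in your last paragraph, and calling it ``a finite verification'' misjudges its nature. It is not a per-residue-class finite check: you must exhibit, \emph{for every $n$ in each class}, a genus-$g$ base surface with a rotation system for $D_n$ and an arrangement of $t$ cuts so that (i) each of the $n$ parallel arcs of $D_n$ crosses $C$ a distinct net number of times mod $n$, (ii) the derived embedding actually has a Hamiltonian face, and (iii) the underlying block embeds piecewise linearly in $\mathbb{R}^3$ inside a wedge. The paper does this via a genuinely inductive construction: it builds an explicit block $\B(6)$ of genus~1, proves by induction on $k$ that a block $\B(4k+2)$ with prescribed permutations $\pi,\pi'$ and rotation system~(\ref{eq.rot_block_string}) exists by concatenating a copy of $\B(6)$ with $\B(n-4)$, and then modifies these blocks to obtain the $n\equiv1$, $n\equiv3$ and $n\equiv0$ cases (the last two by inserting extra parallel edges with opposite-sign crossings across a second cut arc, which is where the ``clockwise lanes'' appear). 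None of this is routine, and the Hamiltonian face in particular is not automatic from the choice of voltages: the paper obtains it by explicitly rerouting the arcs $v_w^{(i)}v_b^{(i+1)}$ along the boundary segments $da$ of adjacent blocks, a step your proposal does not mention.

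Two further cautions. First, ``the face of the derived embedding generated by a distinguished $2$-face of the base'' is language borrowed from the combinatorial setting (Theorem~\ref{thm.topsym}), where the base embedding is 2-cell; in the geometric setting of Proposition~\ref{prop.cuts} the base embedding need not be 2-cell, and the Hamiltonian region of the derived embedding must be produced and verified by other means. Second, your fallback for $n=4$ (Pinavia, genus~4) does not establish the lemma's stated bound $\Lgeom(4)=3$; in fact no construction can, since Lemma~\ref{lem.geometric.4} shows the minimum genus for $n=4$ is~4. This is a defect in the lemma's phrasing rather than in your proposal --- the lemma is only ever invoked with $n\ne4$ --- but you should flag it rather than claim the Pinavia interchange ``handles'' the case. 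As written, your proposal identifies the correct target and the correct bookkeeping but supplies none of the explicit block construction, induction, or Hamiltonian-face engineering that constitutes the proof.
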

%\bigskip

\begin{proof}
    Consider a compact piecewise-linear orientable surface $\B=\B(n)$ embedded in $\mathbb{R}^3$, obtained
    by adding some number $g$, $g \ge 0$ handles to a rectangle with boundary $abcd$. Place a black point $v_b$ and a white point $v_w$ on the boundary segment $da$, so that $d$, $v_w$, $v_b$, $a$
    are all distinct and occur in this order when going from 
    %$a$ to $d$.
   $d$ to $a$.
   
Take a permutation $\pi = \pi(n)= (\pi_1, \dots, \pi_{n-1})$ of $\{1, \dots, n-1\}$ and place points $\pi_{n-1}, \dots, \pi_1$ on the (interior of the) segment $cd$ in this order. Similarly, take a permutation $\pi' = \pi'(n)= (\pi_1', \dots, \pi_{n-1}')$ of $\{0, \dots, n-2\}$ which is related to the permutation $\pi$ through 
\begin{align}\label{eq.pi_prime}
&\pi_i' = \pi_i - 1  &&\mbox{for } i \in \{1, n-1\}.
\end{align}
Place points $\pi_1', \dots, \pi_{n-1}'$ on the segment $ab$ in this order\footnote{In order to keep the notation simpler, we use symbols $\pi_1, \dots, \pi_n$ and $\pi_1', \dots, \pi_n'$ to refer both to two disjoint sets of points and to orderings of two overlapping sequences. The precise meaning should be interpreted from the context.}.

The point $\pi_i$, if $\pi_i = k$, will model $n-k$ lanes incoming from motorways $K, K-1, \dots, K-(n-k)+1$ and exiting to motorway $K + k$. Here we think of motorways as indexed by $\mathbb{Z}_n$ and arranged in the 
counter-clockwise order, so that $K$ is (the index of) the current motorway, $K+1$ is the motorway to its right, etc.  
%\m{Introduce notation for the inverse of $\pi$?}

Suppose there exists an embedding $\mathcal{M}=\mathcal{M}(n)$ of a graph $G=G(n)$ with vertices mapped to $\pi_1, \dots, \pi_{n}$, $\pi_1', \dots, \pi_{n-1}'$, $v_w,v_b$ and edges mapped to the following arcs in $\B$:
\begin{enumerate}
\item[(a)] $\pi_t' v_b$, where $\pi_t' = 0$ (i.e., the unique group of $n-1$ lanes exiting at the current motorway $K$),
\item[(b)] $v_w \pi_i$, for each $i \in \{1, \dots, n-1\}$ (i.e., the lanes from the current motorway $K$ to each of the $n-1$ groups of lanes on the ring road with different destinations),
\item[(c)] for every $k \in \{1, \dots, n-2\}$ the unique arc $\pi_i' \pi_j$ such that $\pi_i' = \pi_j = k$ (i.e. the group of lanes
from motorways $K-1, \dots, K - (n-k) + 1$ that continue on the ring road passing the current motorway $K$). 
\end{enumerate}

Expand each arc $\pi_i' \pi_j$ defined in (c), if, say, $\pi_i' = \pi_j = k$ into $n-k-1$ disjoint `parallel' arcs. Also expand the point $\pi_i'$ into $n-k-1$ points, the starting points of these arcs, also placed on the boundary segment $ab$. By our definition, $\pi_i = \pi_i'+1= k + 1$. Replace the point $\pi_i$  by $n-k-1$ new points on the segment $cd$ (so that $n - (k+1) - 1$ of them are
endpoints of the parallel arcs, and one point is the endpoint of the arc $v_w \pi_i$), so that the order of these points corresponds to the rotation at $\pi_i$. %\m{clarify}.
 Next, add an edge $v_w v_b$ with its image subset of the segment $da$.

We get an embedding $\mathcal{M}'$ of a new (expanded) graph $G'$ into $\B$ with degree 1 vertices corresponding to $n-1 + \dots + 1 = n (n-1)/2$ points on each of the boundary segments $ab$ and $cd$, 2 vertices of degree $n$ corresponding to $v_w$ and $v_b$, and the arcs consisting of the union of $v_w v_b$ and the arcs (a), (b) and (c) replaced using the above modification.

Now make $n$ copies of $(\B, \mathcal{M}', G')$, $B_0'$, \dots, $B_{n-1}'$ and 
%{\color{orange} 
arrange them (using rotations and translations) counterclockwise into a `ring',
%}
so that each block is disjoint and we have $n$-fold rotational symmetry. For $i\in \mathbb{Z}_n$ identify the segment $b^{(i+1)}a^{(i+1)}$ of block $B_{i+1}'$ and the segment
$d^{(i)} c^{(i)}$ of $B_{i}'$ with the corresponding sides of a new polygon $P_i = a^{(i+1)} b^{(i+1)} c^{(i)} d^{(i)}$; also connect the $k$-th point on $d^{(i)} c^{(i)}$ with the $k$-th point of $a^{(i)} b^{(i)}$, using a straight arc inside $P_i$  for each $k \in \{1, \dots, n(n-1)/2\}$. 
%{\color{orange} 
Make the surface closed by gluing
%}
 a genus 0 surface with boundary $b^{(0)}c^{(0)} \dots b^{(n-1)} c^{(n-1)}$ and a genus 0 surface with boundary $d^{(n-1)}a^{(n-1)} \dots d^{(0)} a^{(0)}$. It easy to ensure the n-fold symmetry is preserved and newly added faces are piecewise linear. 

For any $K, k \in \mathbb{Z}_n$ and $k \ne 0$ follow the path $P_{K,k}$ from $v_w^{(K)}$ in $B_K'$ such that the first edge on this path corresponds to $v_w \pi_i$ with $\pi_i = k$ and ending at the first copy of $v_b$, say $v_b^{(j)}$ in $B_j'$. 
%For $k=0$, we can define the path $P_{K,k}$ to be just $v_{w}^{(K)}v_{b}^{(K)}$.
 Each internal vertex on $P_{K,k}$ has degree two. Using
(\ref{eq.pi_prime}) and simple induction we see that $j \equiv K+k \pmod n$.
We get a 3-dimensional embedding of a subdivision of $K_{n,n}$ with $n$-fold rotation symmetry into a surface of genus $n g$.
To ensure the underlying embedding of $K_{n,n}$ has a face bounded by a Hamiltonian cycle $v_w^{(n-1)} v_b^{(n-1)} \dots v_w^{(0)} v_b^{(0)}$ that contains a fixed point, for each $i \in \mathbb{Z}_n$ remove the path connecting $v_w^{(i)}$ with
$v_b^{(i+1)}$ and add a new arc $v_w^{(i)} v_b^{(i+1)}$ that coincides with parts of the boundary segments $a^{(i)} d^{(i)}$ and  $a^{(i+1)} d^{(i+1)}$  of $B_i'$ and $B_{i+1}$ respectively.

%{\color{orange} 
Smooth
%} 
all the degree 2 vertices in the resulting graph that are not copies of $v_w$ or $v_b$. We have obtained
an embedding of $K_{n,n}$ with an $n$-fold rotation symmetry into a surface of genus $gn$. We call this embedding the \emph{ring road embedding}
derived from $(\B, G, \mathcal{M})$.
\m{Use Proposition~\ref{prop.cuts}?}

To finish the proof for $n \bmod 4 \in \{1,2\}$ it suffices to show that the surface $\B(n)$ of genus $g=g(n)$,
the graph $G(n)$ and its embedding $\mathcal{M}(n)$ into $\B(n)$ exist with
\begin{align}\label{eq.g_block_upper}
g \le \frac{\Lgeom(n)} n  = 
\begin{cases} 
\frac{n-1} 4 &\mbox{for } n \equiv 1\,(\bmod\,4); \\
\frac{n-2} 4 &\mbox{for } n \equiv 2\,(\bmod\,4).
\end{cases}
\end{align}
%We will then modify the constructions for $n \equiv 2\pmod4$ to prove the remaining cases $n \bmod 4 \in \{0,3\}$.

\begin{figure}
\centering
\includegraphics[width=3cm]{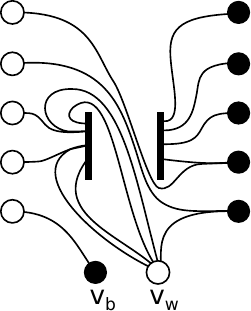}
\caption{The solution for $n=6$.}
\label{fig.block6}
\end{figure}

\emph{Case $n \equiv 2\pmod4$.}\quad The construction for $n=2$ is trivial, so we assume $n \ge 6$. 
The building block of our construction is a (the?) solution for $n = 6$ with $\pi = \pi^{(6)}= (1,4,3,2,5)$, $\pi'=\pi^{(6)'}=(0,3,2,1,4)$,
and the rotations\footnote{Note that for points on the boundary, rotations are not identical modulo cyclic shifts.}
\begin{align*}
%\label{eq.pi_6}
 &\pi_1: v_w \pi_4'  && \pi_2: \pi_5' v_w && \pi_3: v_w \pi_2' &&\pi_4: \pi_3' v_w \nonumber \\
 &\pi_2': \pi_3     && \pi_3': \pi_4 && \pi_4': \pi_1      && \pi_5': \pi_2 \nonumber   && \\ 
 %&\mbox{and} \\
 &\pi_5: v_w && \pi_1': v_b &&v_w: \pi_3\pi_2\pi_5\pi_4\pi_1 && v_b: \pi_1'. 
\end{align*}
This embedding has genus $(6-2)/4 = 1$ as required. It is shown in Figure~\ref{fig.block6}.
We construct a solution for arbitrary $n$,  $n \equiv 2\pmod 4$ by concatenating $(n-2)/4$ copies of the solution for $n=6$, see
Figure~\ref{fig.ring_road_block}, right. 

The concatenation is defined as follows. We use permutations $\pi = \pi^{(n)}$  and $\pi' = \pi'^{(n)}$ as in (\ref{eq.pi_prime}),
where 
\begin{align}
\label{eq.pi_gen_2}
& \pi_{4t+1} = 4t+1, && \pi_{4t+2} = 4t+4, && \pi_{4t+3} = 4t+3,\nonumber \\
&\pi_{4t+4} = 4t+2 && \mbox{for }t = 0, \dots, \left\lfloor n/4 \right \rfloor - 1 \mbox{ and}  \\
&\pi_{n-1} = n-1.\nonumber
 \end{align}

\begin{figure}
\centering
\begin{subfigure}[b]{0.45\textwidth}
\centering
\includegraphics[width=0.7\linewidth]{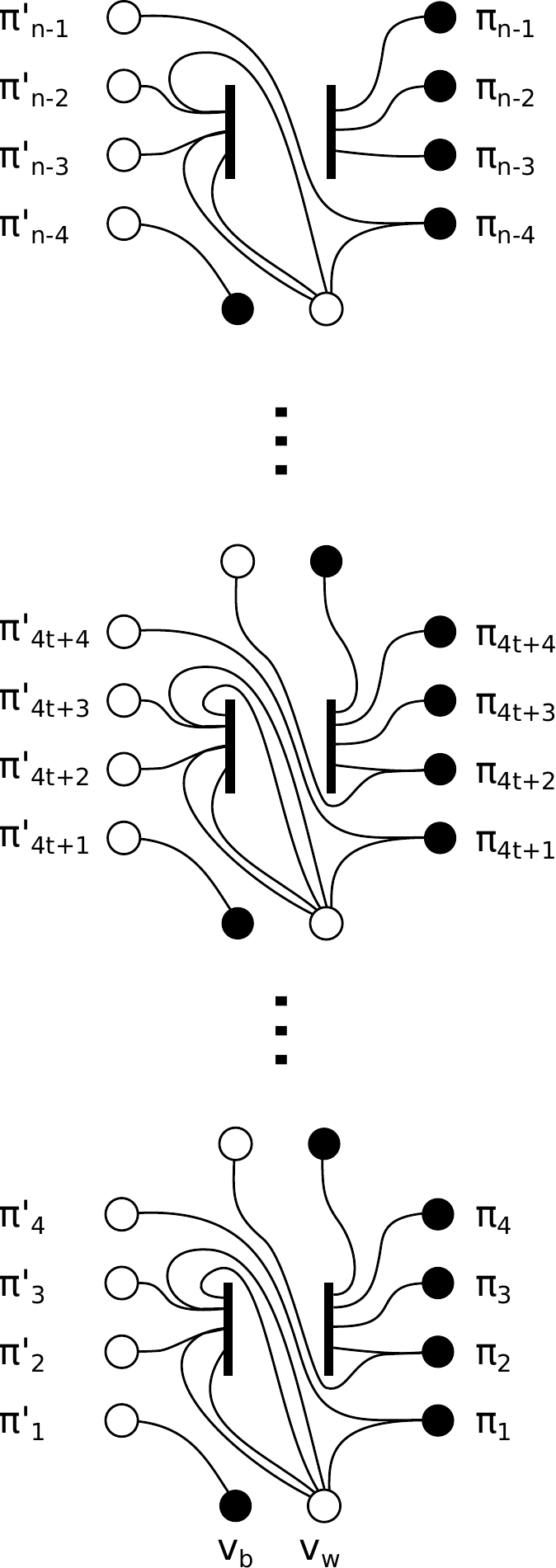}
\end{subfigure}
\begin{subfigure}[b]{0.45\textwidth}
\centering
\includegraphics[width=0.7\linewidth]{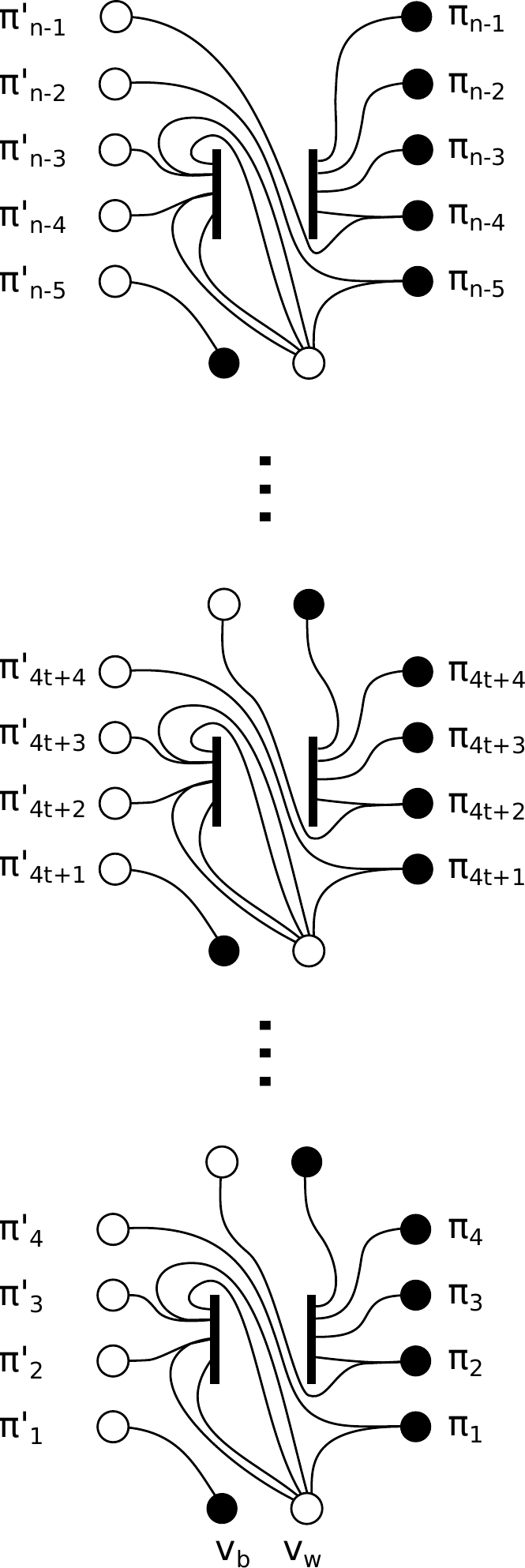}
\end{subfigure}
\caption{\label{fig.ring_road_block}Construction of a block $\B=\B(n)$ from block $\B(6)$, $n\equiv1\pmod 4$ (left) and $n\equiv2\pmod4$ (right).}
\end{figure}
We aim to construct an embedding with the following rotations for $t \in \{0, \dots, (n-2)/4 - 1\}$:
\begin{align}
 &\pi_{4t+1}: v_w \pi_{4t+4}'  && \pi_{4t+2}: \pi_{4(t+1)+1}' v_w && \pi_{4t+3}: v_w \pi_{4t+2}' && \pi_{4t+4}: \pi_{4t+3}' v_w \nonumber \\
 &\pi_{4t+2}': \pi_{4t+3}     && \pi_{4t+3}': \pi_{4t+4} && \pi_{4t+4}': \pi_{4t+1}      && \pi_{4(t+1)+1}': \pi_{4t+2} \nonumber \\
 & \pi_1': v_b && v_b: \pi_1' \nonumber \\
 &\mbox{and} && \pi_{n-1}: v_w.  \label{eq.rot_block_string}
\end{align}
We claim that for each $n$ such that $n=4k+2$,  $\B(n)$ and $\mathcal{M}(n)$ can be chosen so that (\ref{eq.g_block_upper}), (\ref{eq.pi_gen_2}) and (\ref{eq.rot_block_string}) are satisfied. To prove this, we use induction on $k$. 
We have already shown the claim for $k=1$ with embedding $\mathcal{M}(6)$ into a surface $\B(6)$ of genus 1 with boundary rectangle $abcd$. Let $n=4k+2$ with $k \ge 2$ and suppose the claim is true for $n=4(k-1) + 2$, i.e., there is a required embedding $\mathcal{M}(n-4)$ into a surface $\B(n-4)$ with boundary rectangle $a'b'c'd'$ with permutations $\pi = \pi^{(n-4)}$, $\pi' = \pi'^{(n-4)}$ as in (\ref{eq.pi_gen_2}), rotations (\ref{eq.rot_block_string}), rotation at $v_w= v_w^{(n-4)}$:
 \[
 v_w: \pi_{\sigma_1}^{(n-4)} \dots \pi_{\sigma_{n-4-1}}^{(n-4)}
 \]
 and genus at most $\Lgeom(n-4)/(n-4) = \Lgeom(n)/n - 1$. Now consider $\pi^{(n-4)}$ and $\pi'^{(n-4)}$ as permutations of $(5, \dots, n-1)$ and $(4, \dots, n-2)$ respectively, i.e., simply shift the sets permuted by $\pi^{(n-4)}$ and $\pi'^{(n-4)}$ by~4.
 
 Identify $bc$ with $a'd'$ and identify the points (and corresponding vertices) $v_w^{(n-4)}$
with $\pi^{(6)}_5$ and $v_b^{(n-4)}$  with $\pi'^{(6)}_5$. Here $v_w^{(t)}$ and $v_b^{(t)}$ denote the points $v_w$ and $v_b$ respectively in the corresponding embedding $\mathcal{M}(t)$. Contract the edge mapped to $\pi'^{(6)}_5 \pi_2^{(6)}$ and the edge mapped to $\pi^{(6)}_5 v_w^{n-4}$ (
%{\color{orange} 
smoothing the point $\pi'^{(6)}_5$ and pulling the point $\pi^{(6)}_5$ to $v_b$
%}
). We get a construction with boundary
rectangle $ab'c'd$, the permutation 
\[
\tilde{\pi} = (\pi_1^{(6)}, \dots, \pi_4^{(6)}, \pi_1^{(n-4)}, \dots, \pi_{n-4-1}^{(n-4)})
\] on the segment $dc'$,
and the permutation
\[
\tilde{\pi}' = (\pi'^{(6)}_1, \dots, \pi'^{(6)}_4, \pi'^{(n-4)}_1, \dots, \pi'^{(n-4)}_{n-4-1})
\] on the segment $ab'$
and the following rotation at $v_w$:
\begin{equation}\label{eq.rot_v_w}
v_w: \pi_3^{(6)}\pi_2^{(6)} \pi_{\sigma_1}^{(n-4)} \dots \pi_{\sigma_{n-4-1}}^{(n-4)}  \pi_4^{(6)}\pi_1^{(6)}. 
\end{equation}
The rotation system of the resulting embedding $\tilde{M}$ satisfies (\ref{eq.pi_gen_2}) and (\ref{eq.rot_block_string}) with $\pi = \tilde{\pi}$
and $\pi'=\tilde{\pi}'$. Finally, the genus of $\tilde{M}$ is at most $1 + \Lgeom(n)/n -1 = \Lgeom(n)/n$.

\emph{Case $n \equiv 1\pmod4$.}\quad We define $\pi=\pi^{(n)}$  as in (\ref{eq.pi_gen_2}), except we drop the last equality $\pi_{n-1}=n-1$. We define the rotations for $\pi_1, \dots, \pi_{n-1}$, $\pi_1', \dots, \pi_{n-1}'$ and $v_b$ as in (\ref{eq.rot_block_string}), except
we drop the last equality, and for $t = (n-1)/4 - 1$, we replace the rotation at $\pi_{4 t + 2}=\pi_{n-3}$ with:
\[
    \pi_{4t + 2}: v_w.
\]
The rest of the proof is analogous to the proof in the case $n \equiv 2\pmod4$, so we omit it. Figure~\ref{fig.ring_road_block}, left, illustrates the construction.

\textit{Case $n \equiv 3 \pmod 4$.}\quad For $n=3$ we use Proposition~\ref{prop.cuts} with the specific embedding $\mathcal{M}_1(3)$ of $D_3$ into a closed surface $\S_1(3)$ of genus 0 shown in Figure~\ref{fig.sym3}.
\begin{figure}
\centering
\begin{subfigure}[b]{0.45\textwidth}
\centering
\includegraphics[width=0.4\linewidth]{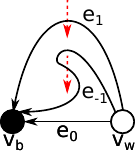}
\caption{}
\end{subfigure}
\begin{subfigure}[b]{0.45\textwidth}
\centering
\includegraphics[width=0.6\linewidth]{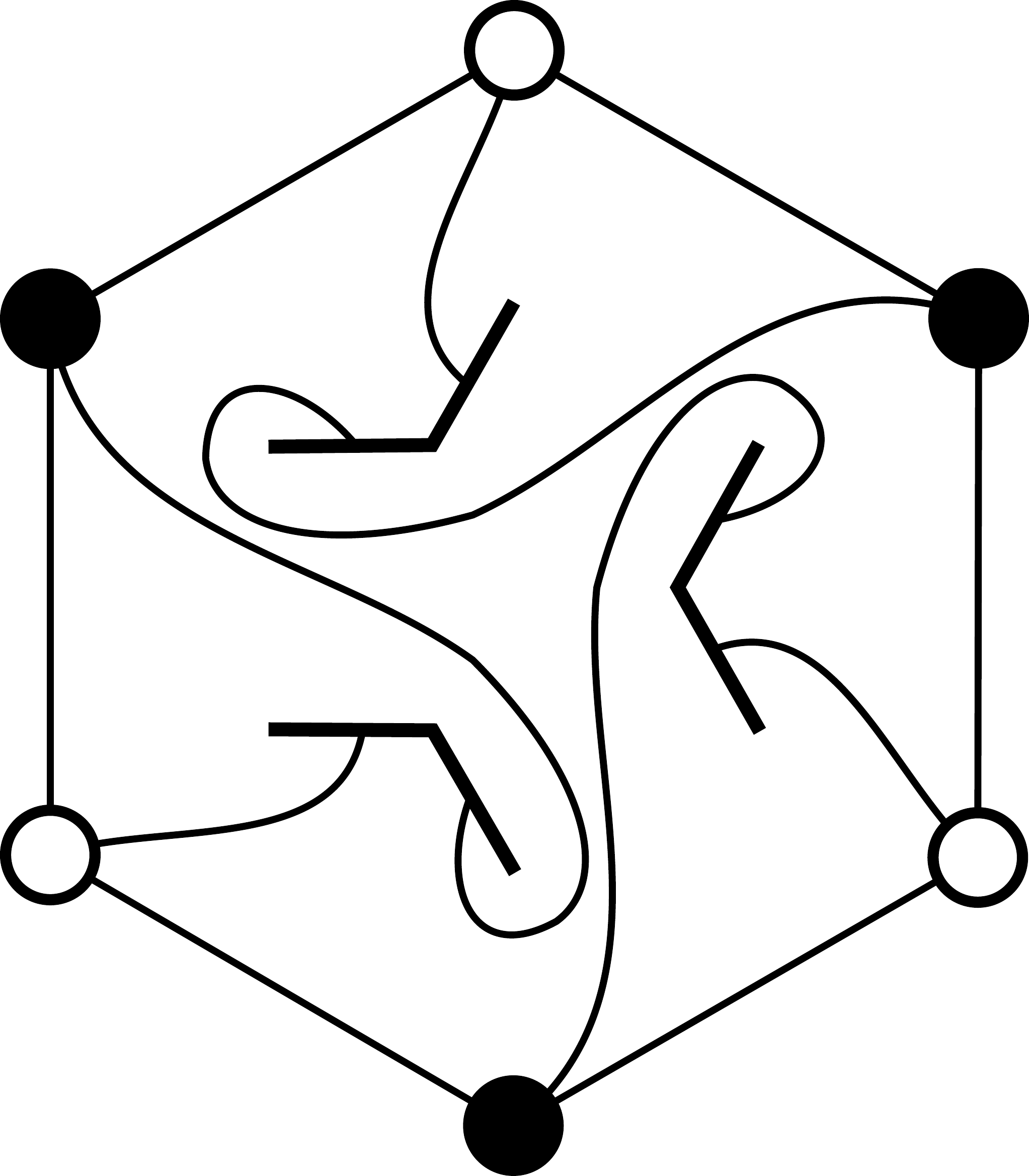}
\caption{}
\end{subfigure}
\caption{\label{fig.sym3} (a) The embedding $\mathcal{M}_1(3)$ of $D_3$ and a set $C$ of two cut arcs (dashed), such that $\alpha_{\mathcal{M}_1,C, 3}: E(D_3) \to \mathbb{Z}_3$ is bijective. (b) The resulting 3-fold symmetric embedding of $K_{3,3}$ with a face bounded by a Hamiltonian cycle.}
\end{figure}
For  $n \ge 7$, start with the embedding $\mathcal{M}(n-1)$ given above. Expand it into
the 
 embedding $(\B, \mathcal{M}', G')$ with boundary $abcd$ as described in the first part of the proof. Identify the segment $ab$ with the segment $dc$. Also identify the $k$th point on $ab$ with the $k$th point of $dc$ for $k \in \{1, \dots, (n-1)(n-2)/2\}$, and smooth each of the identified points. 
 %{\color{orange}
 Cap the two holes with boundaries $cb$ and $da$ respectively by gluing a new 1-face to each of them. We obtain an embedding $\mathcal{M}_1=\mathcal{M}_1(n-1)$ of a copy $G_1$ of $D_{n-1}$ into the surface $\S_1 = \S_1(n-1) \subset \mathbb{R}^3$ of genus $g \le \Lgeom(n-1)/(n-1) = (n-3)/4$. It is easy to ensure that $\mathcal{M}_1$ is piecewise-linear: for example, if we bend the rectangle $abcd$ into a cylinder and then take its piecewise-linear approximation. Let $C_1 = \{X_1\}$ where $X_1$ is a cut (an arc in $\S_1$) that coincides with the segment $dc$.
%}. 
 If we apply Proposition~\ref{prop.cuts} with $\mathcal{S}_1$, $\mathcal{M}_1$ and the group $\mathbb{Z}_{n-1}$, we obtain the solution for $n-1 \equiv 2\,(\bmod\,4)$ roads that we described above. Let us instead apply Proposition~\ref{prop.cuts} with the group $\mathbb{Z}_n$.
 The corresponding voltage function $\alpha_{\mathcal{M}_1, C_1, n}:E(G_1) \to \mathbb{Z}_{n}$ is injective; indeed 
 by our construction the outgoing arc $e_k$ that corresponds to $v_w \pi_i$,  $\pi_i = k$ crosses $X_1$ exactly $k$ times, each time with sign 1, so $\sigma(e_k, C_1) = k$. Thus the image of $\alpha_{\mathcal{M}_1, C_1, n}$ is $\{0, \dots, n-2\}$.

\m{Tidy up the notation for faces and rotations for directed graphs.}
Insert a new edge $e_{-1}$ to $G_1$ and map it to an arc from $v_w$ to $v_b$ parallel to $\mathcal{M}_1(e_0)$, so that the rotation at $v_w$ is $\dots e_0 e_{-1} \dots$ and the rotation
at $v_b$ is $ \dots e_{-1}^{-1} e_0^{-1} \dots$. Now add a new cut $X_2$ disjoint from the image of $G_1$,
that has only one point in common with $e_{-1}$ and $\sigma(e_{-1}, X_2) = -1$. We obtained an embedding $\tilde{\mathcal{M}}$ of $D_n$ into the surface $\S_1$ of genus $g \le (n-3)/4$ and $t=2$ cuts, so that $\alpha_{\tilde{\mathcal{M}}_1, \{X_1, X_2\}, n}: E(D_n) \to \mathbb{Z}_n$ is bijective. Now the proof follows by Proposition~\ref{prop.cuts}.

\textit{Case $n \equiv 0 \pmod 4$.} The solution for $n=4$ is exceptional, so we assume $n \ge 8$. We first determine the rotation at $v_b$ in the embedding $\mathcal{M}_1(n-2)$ of the copy $G_1$ of $D_{n-2}$ into the `quotient' surface $\S_1(n-2)$. %\m{Padalinti į daugiau lemų?}
\begin{claim}\label{claim.rot_v_b}
   For $n \equiv 2\pmod 4$ in the embedding $\mathcal{M}_1(n)$ the rotation at $v_b$ is \m{curves and edges: same notation..}
   \begin{align}
      v_b: e_1^{-1} e_3^{-1} \dots e_{n-3}^{-1} e_{n-1}^{-1} e_{n-2}^{-1}e_{n-4}^{-1} \dots e_2^{-1} e_0^{-1}; \label{eq.rot_v_b}
   \end{align}
   Furthermore, $e_0^{-1}e_3e_5^{-1}e_2$ and $e_2^{-1}e_1e_3^{-1}e_4$ are among the faces of $\mathcal{M}_1(n)$.
\end{claim}
\begin{proof}
    Let  $\B(n), G(n), \mathcal{M}(n)$ be obtained with $\pi = \pi^{(n)}$, $\pi' = \pi'^{(n)}$ as above.
     We say that $k \in \{0, \dots, n-2\}$ is \emph{joined from the left} on the ring road if either $k=0$ or in $\mathcal{M}(n)$ the rotation at $\pi_i$ where $\pi_i = k$ is $\pi_i: \pi_j' v_w$.
    Here $j$ is such that $\pi_j' = k$. Otherwise we say that $k$ is \emph{joined from the right}.

    %\m{nepakankamai formalu}
    %{\color{orange} 
    Consider travelling from the motorway 0 to the motorway $n-1$ in the ring road embedding derived from $\B(n)$, $G(n)$ and  $\mathcal{M}(n)$.
    % }
     At block $0$ we exit at the group of lanes corresponding to $\pi_j^{(0)}$ with $\pi_j = n-1$ (i.e. $j=n-1$).
    Next, we enter block $1$ at the point $\pi'^{(1)}_{n-1} = n-2$ and exit at $\pi^{(1)}_j$ such that $\pi_j=n-2$. We exit block $2$ at $\pi^{(2)}_j$ such that $\pi_j=n-3$, and so on.
     It follows that the rotation at $v_b^{(n-1)}$ in the
   ring road embedding is $r_{n-1}$ where $r_i$, $i \in \{0, \dots, n-1\}$ are defined by
    \begin{align*}
        r_i = \begin{cases}
        v_0, &\mbox{ if } i=0; \\
        r_{i-1} v_i, &\mbox{ if $i \ge 1$ and  $n-i-1$ is joined from the left};\\
        v_i r_{i-1}, &\mbox{ if $i \ge 1$ and  $n-i-1$ is joined from the right}.
        \end{cases}
    \end{align*}
    Here for a sequence $y=(y_1, \dots, y_k)$ and an element $x$ we use notation $yx = (y_1, \dots, y_k, x)$ and
    $xy = (x, y_1, \dots, y_k)$.
    Using (\ref{eq.rot_block_string}) we see that $n-i-1$ is joined from the right if $i \equiv 0 \pmod 2$ and joined from the left otherwise.
    Thus the rotation at $v_b^{(n-1)}$ in the ring road embedding is
    \[
    v_b^{(n-1)}: v_w^{(n-2)} v_w^{(n-4)} \dots v_w^{(4)} v_w^{(2)} v_w^{(0)} v_w^{(1)} v_w^{(3)} \dots v_w^{(n-1)}.
    \]
    Since $v_w^{(i)} v_b^{(n-1)}$ in the ring road embedding 
    %{\color{orange}
    is mapped by the covering projection
    %} 
    to the arc $e_{n-1-i}$ in $\mathcal{M}_1$, it follows that the rotation at $v_b$ in $\mathcal{M}_1$ is (\ref{eq.rot_v_b}).
By (\ref{eq.pi_gen_2}), (\ref{eq.rot_v_w}) and our construction, the rotation at $v_w$ in $\mathcal{M}_1$ is
\[
v_w: e_0 e_3 e_4 \dots e_5 e_2 e_1.
\]
Thus using (\ref{eq.rot_v_b}), $\mathcal{M}_1$ has a face $e_0^{-1}e_3e_5^{-1}e_2$ and a face $e_2^{-1}e_1e_3^{-1}e_4$. 
\end{proof}

\medskip

\begin{figure}
\centering
\begin{subfigure}[b]{0.45\textwidth}
\centering
\includegraphics[width=0.7\linewidth]{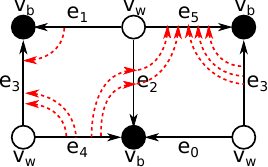}
\caption{}
\end{subfigure}
\begin{subfigure}[b]{0.45\textwidth}
\centering
\includegraphics[width=0.7\linewidth]{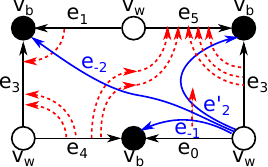}
\caption{}
\end{subfigure}
\caption{\label{fig.modification_4k}Left: faces $e_0^{-1}e_3e_5^{-1}e_2$ and $e_2^{-1}e_1e_3^{-1}e_4$ in the embedding $\mathcal{M}_1$
and the intersection of the cut arc $X_1$ with these faces (red dashes).
Right: adding a new cut arc, replacing the arc $e_2$ and adding new arcs $e_{-1}$ and $e_{-2}$ to solve the case $n \equiv 0\pmod 4$.}
\end{figure}
Let $X_1$ be the cut arc in $\S_1=\S_1(n-2)$ defined above. Using Claim~\ref{claim.rot_v_b}, the 
fact that each crossing of an arc $e_k$ and $X_1$ has sign 1 
%{\color{orange}
 and a simple geometric argument (Jordan-Schoeneflies theorem in the piecewise-linear case), we obtain the order $X_1$ enters and leaves
the faces  $e_0^{-1}e_3e_5^{-1}e_2$ and $e_2^{-1}e_1e_3^{-1}e_4$. The faces and the segments of $X_1$ intersecting them are 
shown, up to homeomorphism in Figure~\ref{fig.modification_4k}(a).  
%}
 
Now in the face $e_0^{-1}e_3e_5^{-1}e_2$ add a new arc $e_{-1}$ from $v_w$ to $v_b$ parallel to $e_0$
and a new arc $e_2'$ parallel to $e_3$. Introduce a new cut arc $X_2$ that starts inside
the newly created face $e_{-1}e_0^{-1}$, ends at the new face $e_3 e_2'^{-1}$ such that 
$X_2$ shares exactly one point with each of $e_2'$ and $e_{-1}$, $\sigma(e_2', X_2) = \sigma(e_{-1}, X_2) = -1$
and it is disjoint from the image of $G_1$ and $X_1$.
Finally, remove the arc $e_2$ and add a new arc $e_{-2}$ from $v_w$ to $v_b$, so that $\sigma(e_2, X_2) = -1$ and the rotation at $v_b$
becomes
\[
v_b: \dots e_0 e_{-1} e_{-2} e_{2'} e_3,
\]
see Figure~\ref{fig.modification_4k}(b). Let $C = \{X_1, X_2\}$. 
%{\color{orange}
 We have
$\sigma(e_{-1}, C) = -1$, $\sigma(e_{-2}, C) = -2$ and $\sigma(e_{2'}, C) = 2$.
%} % \m{susikirtimų skaičiaus invariantiškumas\dots}
By the earlier argument, $\sigma(e_k, C) = \sigma(e_k, \{X_1\}) =k$ for all $k \in \{0, \dots, n-3\} \setminus \{2\}$.
Thus for the new embedding $\mathcal{M}_2$ of $D_n$ with arcs $e_{-2}, e_{-1}, e_0, e_1, e_2', e_3, \dots, e_{n-3}$
the voltage function $\alpha_{\mathcal{M}_2, C, n}: D_n \to \mathbb{Z}_n$ is bijective and the proof follows by Proposition~\ref{prop.cuts}.

\end{proof}

\subsection{Completing the proof for 3-dimensional embeddings}

An embedding as in Theorem~\ref{thm.geometric.opt} has a face invariant under the $n$-fold rotational symmetry, and hence at least one point in that face is a fixed point under the rotation. Thus Theorem~\ref{thm.geometric.opt} follows from Proposition~\ref{prop.rotation} and the next result.
\begin{lemma} \label{lem.geometric.opt}
    Let $n \ge 2$ be an integer. 
    Let $\mathcal{M}$ be a piecewise linear embedding of $K_{n,n}$ into a closed connected orientable surface $\S \subset \mathbb{R}^3$. 
    Suppose $\mathcal{M}$ has n-fold rotational symmetry such that no vertex of the graph is mapped to a fixed point
    but 
    \begin{itemize}
        \item[($\bullet$)] some vertex of $\S$ is a fixed point.
    \end{itemize}
    %If $\mathcal{M}$ is 2-cell
    Then the genus of $\S$ is at least $\Lgeom(n)$.
    
    Furthermore, if $n \ne 4$, this lower bound is best possible and there exists a genus $\Lgeom(n)$ embedding satisfying
    the assumption of Theorem~\ref{thm.geometric.opt}.
\end{lemma}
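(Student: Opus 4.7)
The plan is to assemble the statement from three components already in place: Proposition~\ref{prop.cuts}, which translates between $n$-fold symmetric 3-dimensional embeddings with a fixed point and voltage graphs with a set of cuts; Lemma~\ref{lem.geometric_lower}, which supplies the combinatorial lower bound in that translated setting; and Lemma~\ref{lem.geomupper}, which provides the explicit ring road constructions.

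For the lower bound I will first dispose of $n=2$, where $\Lgeom(2)=0$ holds automatically. For $n \ge 3$, starting from an embedding $\mathcal{M}$ of $K_{n,n}$ satisfying the hypotheses, I would apply direction $(a)\!\implies\!(b)$ of Proposition~\ref{prop.cuts}: this needs $n \ge 3$ and the hypothesis that some point of the surface is a fixed point, both of which we have. The output is a piecewise linear embedding $\mathcal{M}_0$ of a connected directed multigraph $G$ into a closed oriented surface $\S_0 \subset \mathbb{R}^3$ of some genus $g$, together with a set $C$ of pairwise disjoint arcs and closed curves including $t \ge 1$ arcs, such that $n g + (n-1)(t-1)$ equals the genus of $\S$ and $K_{n,n}$ arises as the derived graph of $(G, \alpha_{\mathcal{M}_0, C, n})$ under the voltage group $\mathbb{Z}_n$.

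My next step is to recognise $G$ as the dipole $D_n$ with bijective voltage function. Since no vertex of $K_{n,n}$ is a fixed point, every vertex orbit has size $n$, forcing $|V(G)|=2$; from $|E(K_{n,n})|=n^2$ and the fact that each edge of $G$ gives rise to exactly $n$ edges in the derived graph, we get $|E(G)|=n$; since $K_{n,n}$ has no multiple edges, the voltage function must be a bijection onto $\mathbb{Z}_n$; and since $K_{n,n}$ is bipartite, $G$ has no loops. Hence $G = D_n$ with a bijective voltage assignment, and Lemma~\ref{lem.geometric_lower} applies directly to yield $n g + (n-1)(t-1) \ge \Lgeom(n)$, which is the desired lower bound on the genus of $\S$.

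For the upper bound when $n \ne 4$, I would invoke Lemma~\ref{lem.geomupper}, which constructs a 3-dimensional piecewise linear complete interchange of genus at most $\Lgeom(n)$ with $n$-fold rotational symmetry. Such an interchange has a face bounded by a Hamiltonian cycle whose interior necessarily meets the rotation axis in a fixed point, no vertex of the graph lies on that axis, and therefore the construction satisfies the stronger hypotheses of Theorem~\ref{thm.geometric.opt}, and in particular those of the present lemma.

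I do not expect a genuinely difficult step here: the real content is hidden in the three ingredients being quoted. The only care needed is the bookkeeping in Proposition~\ref{prop.cuts} to confirm that the hypothesis ``some vertex of $\S$ is a fixed point'' yields $t \ge 1$ on the cut side, and the brief verification that the quotient graph is indeed $D_n$ with a bijective voltage — which is where the specific structure of $K_{n,n}$ (bipartite, simple) becomes decisive.
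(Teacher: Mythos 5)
Your proposal is correct and follows essentially the same route as the paper: dispose of $n=2$ trivially, apply Proposition~\ref{prop.cuts}$(a)\!\Rightarrow\!(b)$ for $n\ge 3$, identify the base graph as $D_n$ with a bijective voltage assignment (the paper compresses this into ``since $G$ is bipartite,'' whereas you spell out the orbit-size and edge-count bookkeeping, but the conclusion is the same), invoke Lemma~\ref{lem.geometric_lower} for the lower bound, and cite Lemma~\ref{lem.geomupper} for the constructions when $n\ne 4$. No gaps.
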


\begin{proof} %{Lemma~\ref{lem.geometric.opt}}
   The lemma is trivial for $n=2$. %The case $n=4$ is treated in Lemma~\ref{lem.geometric.4}.
   
   %%First assume $\S$ contains a fixed point under the $n$-fold rotation (note that by the fixed point theorem, every embedding as in Theorem~\ref{thm.geometric.opt}, has such a point). 
   
   Suppose $n \ge 3$ and the embedding $\mathcal{M}$ of a complete bipartite graph $G$ into $\S$ satisfies all the assumptions of the statement. %By Proposition~\ref{prop.rotation}, no vertex of $G$ is mapped to a fixed point under the rotation.
    %Therefore we can apply 
    Use Proposition~\ref{prop.cuts} to obtain an embedding $\mathcal{M}_B$ of a base graph $G_B$ in a base surface $\S_B$ together with a set of curves $C$ in $\S_B$, such that $G$ is isomorphic to the graph derived from the voltage graph $(G_B, \alpha_{\mathcal{M}_B, C, n})$. Since $G$ is bipartite it follows that $|V(G_B)| = 2$, $|E(G_B)| = n$, and no edge is a loop. By inverting voltages if necessary, we can assume each edge of $G_B$ is oriented from one vertex, call it $v_w$, to another vertex, call it $v_b$. Next, since $G$ is derived from $G_B$, it must be that the image $\{\alpha_{\mathcal{M}_B, C, n}(e), e \in E(G_B)\}$ contains every element of $\mathbb{Z}_n$.
   By Proposition~\ref{prop.cuts} the genus of $\S$ is $ng + (n-1)  (t-1)$, where $g$ is the genus of $\S_B$ and $t$, $t \ge 1$, is the number of arcs in $C$. Now Lemma~\ref{lem.geometric_lower} implies that the genus of $\S$ is at least $\Lgeom(n)$.

Finally, the upper bound, or the fact that for each $n \ge 3$, $n \ne 4$ there is a symmetric embedding of genus $\Lgeom(n)$ follows by Lemma~\ref{lem.geomupper}.

% an $n$-fold symmetric embedding of $K_{4,4}$ into a surface of genus $\frac {n^2} 4 = 4$ satisfying the conditions of the the theorem statement corresponds to the embedding from the Pinavia interchange \cite{jbk2010}. {\color{orange} The proof that this genus is best possible
%is given in Lemma~\dots.}
\end{proof}

It remains to consider the special case $n=4$.

\begin{figure}[t]
    \centering
    \includegraphics[width=0.3\linewidth]{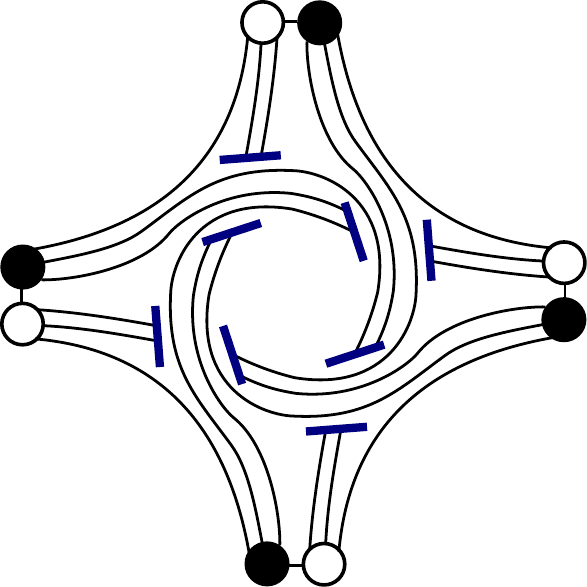}
    \caption{$n=4$: the ``Pinavia'' interchange \cite{jbk2010}. }
    \label{fig.pinavia_embedding}
\end{figure}

\begin{lemma}\label{lem.geometric.4}
    In the case $n=4$ of both Lemma~\ref{lem.geometric.opt} and Theorem~\ref{thm.geometric.opt} the minimum genus is $4$.
\end{lemma}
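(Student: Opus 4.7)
For the upper bound, my plan is to cite the Pinavia interchange shown in Figure~\ref{fig.pinavia_embedding}: it is a piecewise linear embedding of $K_{4,4}$ into a genus $4$ orientable surface in $\mathbb{R}^3$ having $4$-fold rotational symmetry whose axis passes through a face bounded by a Hamiltonian cycle. It therefore satisfies the hypotheses of Theorem~\ref{thm.geometric.opt}, and hence also those of Lemma~\ref{lem.geometric.opt}.

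For the lower bound I would follow the proof of Lemma~\ref{lem.geometric.opt}. Given an embedding as in that lemma with $n=4$, Proposition~\ref{prop.cuts} yields a base embedding $\mathcal{M}_B$ of $D_4$ in some closed orientable surface $\S_B$ of genus $g\ge 0$, together with a set $C$ of pairwise disjoint simple curves containing $t\ge 1$ arcs, such that $\alpha=\alpha_{\mathcal{M}_B,C,4}\colon E(D_4)\to\mathbb{Z}_4$ is a bijection and the ambient genus equals $4g+3(t-1)$. The case $g\ge 1$ is immediate from $4g\ge 4$, so the whole job is to rule out $g=0$ with $t\in\{1,2\}$.

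The new ingredient, beyond what Lemma~\ref{lem.geometric_lower} alone gives, will be a range bound on signed intersections. When $g=0$, $\S_B\setminus\{v_w,v_b\}$ is homeomorphic to an open cylinder with $\pi_1=\mathbb{Z}$. Passing to the universal cover $\mathbb{R}\times(0,1)$, each meridian $e_l$ lifts to equally spaced vertical lines $\tilde\theta=l+4m$ ($m\in\mathbb{Z}$), and each simple cut arc $a_i\in C$ lifts to a simple arc between two points $(\tilde\theta_p,h_p),(\tilde\theta_q,h_q)$ of the strip. Since a simple arc in the strip has signed intersection $\pm 1$ with a vertical line $\tilde\theta=c$ precisely when $c$ strictly separates its endpoints and $0$ otherwise, a direct count will give
\[
\sigma(e_l,a_i)\in\{w_i,\,w_i+1\}\qquad\text{for every }l\in\{1,2,3,4\},
\]
for some integer $w_i$ determined by the winding and orientation of $a_i$. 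Cut loops contribute a value constant in $l$, so summing over the $t$ arcs and any loops, $\sigma(e_l,C)\in\{W,W+1,\ldots,W+t\}$ for some $W\in\mathbb{Z}$, and $\alpha$ takes at most $t+1$ distinct values modulo $4$. Bijectivity then forces $t+1\ge 4$, i.e. $t\ge 3$, contradicting $t\le 2$.

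The hard part will be verifying the range bound $\sigma(e_l,a_i)\in\{w_i,w_i+1\}$ with the correct bookkeeping of orientations and of the choice of lift; it is special to $g=0$ (for higher genus the complement has richer $\pi_1$), but that is fine because $4g\ge 4$ already suffices there. Everything else is a one-line pigeonhole in $\mathbb{Z}_4$.
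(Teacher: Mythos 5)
Your argument is correct, and the lower bound takes a genuinely different route from the paper's. The paper first invokes Lemma~\ref{lem.geometric_lower} with $n=4$ and $\Lgeom(4)=3$ to pin down $g=0$, $t=2$, then uses Euler's formula to conclude the embedding of $D_4$ in the sphere has exactly four $2$-faces; it distributes the $2t=4$ cut-arc endpoints one per face (via the endpoint argument from the proof of Lemma~\ref{lem.geometric_lower}) and explicitly enumerates the possible signature vectors $\sigma_c$ for arcs (of the form $(x,x+s,x,x)$ or $(x,x+s,x+s,x)$) and loops (constant), concluding that the sum has at most three distinct values. Your argument bypasses Lemma~\ref{lem.geometric_lower} and the $2$-face/endpoint bookkeeping entirely for $n=4$: you dispose of $g\geq 1$ by $4g\geq 4$, and for $g=0$ you pass to the universal cover $\mathbb{R}\times(0,1)$ of the doubly punctured sphere to prove the uniform range bound $\sigma(e_l,a_i)\in\{w_i,w_i+1\}$ for each cut arc. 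This is correct: signed intersection of a simple arc in a strip with a vertical line is a homotopy invariant rel endpoints, hence is $[\tilde\theta_p<c<\tilde\theta_q]$ after orienting, and the number of integers in a fixed residue class mod $4$ lying in an interval differs by at most one between classes. Cut loops contribute a constant since all four edges represent the same class in $H_1(\text{cylinder},\partial)$. The pigeonhole then shows $\alpha$ has at most $t+1\le 3$ values mod $4$ whenever $t\le 2$, contradicting bijectivity. The two proofs establish essentially the same intermediate fact (at most $t+1$ distinct voltages when $g=0$); the paper does this by a local analysis of regions and endpoints, whereas yours is a global covering-space argument. Yours is arguably cleaner and handles $t=1$ and $t=2$ simultaneously, at the cost of the orientation/lift bookkeeping you correctly flag as the delicate part; the paper's is more elementary but relies on the $2$-region machinery it has already developed.
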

\begin{proof}
   %Let $\mathcal{M}': G' \to  \mathbb{S}'$ be the embedding of $K_{4,4}$ of genus $g'$ that satisfies the conditions of the theorem and has the minimal genus.
   %By Proposition~\ref{prop.rotation}, no vertex of $G'$ is mapped to a fixed point under the rotation.
   %Apply Proposition~\ref{prop.cuts} to obtain $C$, $t$, $g$ and $\mathcal{M}: G \to \mathbb{S}$, where $G$ is a copy of $D_n$.
   Suppose the minimum genus is $g' < 4$.
   Consider the genus $g$ embedding $\mathcal{M}$ of $D_4$ and the set $C$ with $t$ cut arcs obtained from a genus $g'$ counterexample using Proposition~\ref{prop.rotation} and Proposition~\ref{prop.cuts}. 
   
   %Assume $g' < 4$. 
   By Lemma~\ref{lem.geometric_lower}, we must have $g=0$ and $t=2$. That is, we have a $D_4$, two cut arcs and some number of cut loops embedded into the sphere, such that each arc of the embedded graph makes a different number (modulo 4) of
   intersections with the cut arcs, as defined by (\ref{eq.cutvolt}).

   Let the rotation at the white vertex be $v_w: e_0 e_1 e_2 e_3$. By Euler's formula, the set of regions of $\mathcal{M}$ consists of four 2-faces,
   let $F_i$ be bounded by $e_i$ and $e_{(i+1) \bmod 4}$.
   Write $\sigma_X = (\sigma(e_0, X), \sigma(e_1, X), \sigma(e_2,X), \sigma(e_3,X))$.
    Using the argument of the proof of Lemma~\ref{lem.geometric_lower},
   each of the four endpoints of the cut arcs must be contained in a different face. By the same argument,
   the contributions of a single cut arc $c$ are $\sigma_c=(x, x+s, x, x)$ if the endpoints
   of $c$ are in $F_0$ and $F_1$, and $\sigma_c=(x, x+s, x+s, x)$ if the endpoints of $c$ are in $F_0$ and $F_2$. Here
   $x$ is some integer, $s \in \{1, -1\}$ and we considered two possible ways to pick the first edge for an intersection from each endpoint and two ways to direct $C$. Similarly, for a cut loop $c_1 \in C$, $\sigma_{c_1}=(z_1, z_1, z_1, z_1)$ for some integer $z_1$. 
   
   Summing all contributions, the image of $\sigma_C$ is either $\{y, y+s_1+s_2\}$, or $\{y, y+s_1, y+s_2\}$ or $\{y, y+s_1, y+s_1+s_2, y+s_2\}$, for some $y \in \mathbb{Z}$ and $s_1, s_2 \in \{-1, 1\}$. Therefore the voltage function $\alpha_{\mathcal{M}, C, 4}$ is not bijective, a contradiction. 

   Finally, one of the solutions of genus 4 is the Pinavia interchange, see Figure~\ref{fig.pinavia_embedding}. Another solution
   could be obtained by using the construction of Lemma~\ref{lem.geomupper}.
   \end{proof}

\subsection{Other observations}

One may ask what happens if we drop the assumption that $\S$ contains a fixed point.

\begin{lemma} \label{lem.geometric.opt2}
    Let $n, \mathcal{M}, \S$ be as in Lemma~\ref{lem.geometric.opt}, but drop the assumption ($\bullet$).
    Then the genus of $\S$ is at least 
    \begin{align*}
    \Lgeomt(n)=
    \begin{cases}
        \frac {(n-2)^2} 4, &\mbox{if }  n\equiv0\,(\bmod \, 4);  \\
        \frac {n(n-1)} 4, &\mbox{if }  n\equiv1\,(\bmod \, 4); \\
        \frac {n (n-2)} 4, &\mbox{if }  n\equiv2\,(\bmod \, 4);  \\
        \frac {n^2 - 3n + 4} 4, &\mbox{if }  n\equiv3\,(\bmod \, 4),
    \end{cases}
    \end{align*}
    and this bound is best possible.
\end{lemma}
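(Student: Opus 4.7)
The plan is to split into two cases based on whether the surface $\S$ meets the rotation axis. If $\S$ does contain a fixed point, then condition~$(\bullet)$ of Lemma~\ref{lem.geometric.opt} holds and that lemma gives genus at least $\Lgeom(n)$; a direct comparison of the case formulas shows $\Lgeom(n) \ge \Lgeomt(n)$ in every residue class of $n\bmod 4$ (with equality for $n \equiv 1,2\,(\bmod\,4)$ and a gap of $n-2$ otherwise), so the required lower bound holds in this case.

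If $\S$ contains no fixed point, I invoke the $(a)\Rightarrow(b)$ direction of Proposition~\ref{prop.cuts} extended to $t=0$, as noted in the paragraph following that proposition. This produces a piecewise linear embedding $\mathcal{M}_B$ of $D_n$ in a surface $\S_B$ of some genus $g$, together with a set $C$ consisting only of pairwise disjoint closed cut curves, such that $\alpha_{\mathcal{M}_B,C,n}$ is a bijection and $g'=ng-(n-1)$ by Riemann--Hurwitz. Now I adapt the proof of Lemma~\ref{lem.geometric_lower} to $t=0$: since no curve in $C$ has any endpoint, for any 2-region $R$ of $\mathcal{M}_B$ with boundary arcs $\mathcal{M}_B(e_1),\mathcal{M}_B(e_2)$ the signed intersection counts with each $a\in C$ coincide, forcing $\alpha(e_1)=\alpha(e_2)$ and contradicting bijectivity. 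Hence $\mathcal{M}_B$ has no 2-region, so the averaging inequality~(\ref{eq.k.lower}) with $k=0$ gives $g \ge \lceil n/4\rceil$. Substituting into $g'=ng-(n-1)$ and checking each residue class yields $g' \ge \Lgeomt(n)$, with equality attainable only for $n \equiv 0,3\,(\bmod\,4)$.

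For the upper bound the constructions split analogously. When $n \equiv 1,2\,(\bmod\,4)$ we have $\Lgeomt(n)=\Lgeom(n)$, and the ring-road constructions of Lemma~\ref{lem.geomupper} already achieve this genus while placing no graph vertex on the rotation axis, so they satisfy the conditions of the present lemma without modification. When $n \equiv 0,3\,(\bmod\,4)$ I would run Proposition~\ref{prop.cuts} in reverse: specify a piecewise linear base embedding $\mathcal{M}_B$ of $D_n$ in a surface $\S_B \subset \mathbb{R}^3$ of genus $\lceil n/4\rceil$ together with a set $C$ of closed cut loops making $\alpha_{\mathcal{M}_B,C,n}$ a bijection, then form the $n$-sheeted unbranched rotational cover in $\mathbb{R}^3$ by gluing $n$ rigid rotated copies of $\S_B$ along the cuts. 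The resulting $\S'$ has no fixed point whatsoever, has exact $n$-fold rotational symmetry, and has genus $n \lceil n/4\rceil - (n-1) = \Lgeomt(n)$; connectedness of $\S'$ follows because the voltage differences $\{i-j : i,j \in \mathbb{Z}_n\}$ generate $\mathbb{Z}_n$ (they contain $1$).

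The main obstacle is producing the explicit base data $(\S_B,\mathcal{M}_B,C)$ for $n \equiv 0,3\,(\bmod\,4)$. For $n \equiv 0\,(\bmod\,4)$ this requires a quadrangular embedding of $D_n$ in a genus-$n/4$ surface (all $n/2$ faces of size $4$, which exists via a suitably shifted rotation system at $v_w$ and $v_b$) together with closed loops realizing voltages $0,1,\dots,n-1$ bijectively; for $n \equiv 3\,(\bmod\,4)$ the embedding in genus $(n+1)/4$ must have exactly one $6$-face and the remaining $(n-3)/2$ faces of size $4$ (as forced by Euler), again with a suitable cut system. I would obtain these by modifying the ring-road block constructions from Lemma~\ref{lem.geomupper}: identify the top and bottom polar boundary circles via a symmetry-preserving cylindrical gluing instead of capping them off with disks, which removes the two polar faces (and hence the axis) from the surface without introducing new handles, and interpret the remnant of the polar identifications as the set $C$ of closed cuts. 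Keeping the construction piecewise linear, verifying exact $n$-fold Euclidean symmetry in $\mathbb{R}^3$, and confirming bijectivity of the resulting voltage function is the technical heart of the argument.
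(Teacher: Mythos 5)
Your lower bound argument matches the paper's: the fixed-point case reduces to Lemma~\ref{lem.geometric.opt} after checking $\Lgeom(n) \ge \Lgeomt(n)$, and the no-fixed-point case applies $(a)\Rightarrow(b)$ of Proposition~\ref{prop.cuts} with $t=0$, shows there are no $2$-regions by the same endpoint-free intersection argument, applies Euler's formula to get $g \ge \lceil n/4 \rceil$ and pushes through Riemann--Hurwitz. Your explicit case split is a slightly cleaner presentation than the paper's, which leaves the fixed-point case implicit.

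For the upper bound, however, you have identified the correct plan but not carried it out, and you say so yourself (\emph{``The main obstacle is producing the explicit base data \dots is the technical heart of the argument''}). Two things are missing. First, for $n \equiv 0, 3 \,(\bmod\,4)$ you need concrete piecewise-linear base embeddings $\mathcal{M}_B$ of $D_n$ into $\S_B \subset \mathbb{R}^3$ of the right genus, together with closed cut loops giving a bijective voltage assignment; the paper gives these explicitly by modifying the ring-road block $\B(n-2)$, specifying the extended permutations $\tilde\pi, \tilde\pi'$, the added edges and rotations, and a separate hand-built example for $n=4$. Your sketch of ``cylindrical gluing instead of capping off'' is the right idea, but the bijectivity of the resulting voltage function and the piecewise-linear symmetry in $\mathbb{R}^3$ are not automatic and are exactly what the paper verifies. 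Second, Proposition~\ref{prop.cuts}~$(b)\Rightarrow(a)$ is stated only for $t \ge 1$ (precisely because with $t=0$ the glued cover can be disconnected), so invoking it ``in reverse'' with $t=0$ requires an additional argument. Your remark that the voltage differences generate $\mathbb{Z}_n$ is the germ of the right connectedness argument --- surjectivity of the induced $\pi_1(\S_B) \to \mathbb{Z}_n$ --- but it needs to be spelled out as a statement about loops in $\S_B$, not edges of $D_n$, and the paper instead just verifies connectivity directly for its particular gluing. Until the explicit constructions and the $t=0$ cover argument are filled in, the upper bound is incomplete.
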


\begin{proof}
Suppose $\S$ has no fixed points. 
As remarked in the comment after Proposition~\ref{prop.cuts}, the implication (a)$\implies$(b) still holds yielding an embedding $\mathcal{M}_B$ of $D_n$ into a surface $\S_B$ as in the proof of Lemma~\ref{lem.geometric.opt}, but now the family $C$ of curves contains $t=0$ arcs, so all curves in $C$ are loops. Using the same argument as in Lemma~\ref{lem.geometric_lower}, $\mathcal{M}_B$ can have no regions with boundary of size 2. Writing $r$ for the number of regions and $g$ for the genus of $\S_B$, we have by Euler's formula
\begin{align*}
g \ge \left \lceil \frac{2 - 2 - r + n} 2 \right \rceil \ge \left \lceil \frac{n - \lfloor \frac {2n} 4 \rfloor} 2 \right \rceil = \left \lceil \frac n 4 \right \rceil.
\end{align*}
So by the Riemann-Hurwitz formula the genus of $\mathcal{M}$ is 
\begin{align*}
n g + (n-1)(t-1) = n g - (n-1) \ge  \Lgeomt(n).
%                       \begin{cases}
%                         \frac{ (n-2)^2} 4, &\mbox{if $n$ is even}; \\
%                         \frac{n^2 - 3n + 4} 4, &\mbox{if $n$ is odd}. \\
%                         \end{cases}
\end{align*}
Note that for $n \bmod 4 \in\{1,2\}$ we have $\Lgeomt(n) = \Lgeom(n)$ and the proof follows by Lemma~\ref{lem.geometric.opt} (for such $n$ the last inequality is strict, so there are no minimal genus constructions without fixed points in $\S$).

For $n \equiv 0 \,(\bmod\,4)$, $n \ge 8$ we construct an embedding $\mathcal{M}_B$ of $D_n$ into a torus with a set $C$ consisting of a single cut loop and genus $n/4$ as follows (see also Figure~\ref{fig.nonfix}~(a)). Start with the construction $\B(n-2)$ of the proof of Lemma~\ref{lem.geomupper}, and let $\pi^{(n-2)} = (\pi_1, \dots, \pi_{n-3})$ and $\pi'^{(n-2)} = (\pi_1', \dots, \pi_{n-3}')$ be the respective permutations. Extend these permutations by defining $\tilde{\pi} = (\pi_1, \dots, \pi_{n-3}, n-2, n-1)$ and $\tilde{\pi}' = (\pi_1', \dots, \pi_{n-3}', n-3, n-2)$ and add points $n-3, n-2$ on the segment $a b$ and the points $n-2, n-1$ on the segment $d c$ of $\B(n-2)$. Now identify $da$ with $cb$ in $\B(n-2)$ to get a cylinder with $(n-4)/4$ handles. Add edges $\tilde{\pi}_{n-2}' \tilde{\pi}_{n-3}$, $\tilde{\pi}'_{n-1} \tilde{\pi}_{n-2}$, $v_w \tilde{\pi}_{n-2}$, $v_w \tilde{\pi}_{n-1}$ and $v_w v_b$ so that the rotations are
\begin{align*}
& \tilde{\pi}'_{n-2}: \tilde{\pi}_{n-3} && \tilde{\pi}'_{n-1}: \tilde{\pi}_{n-2} && \tilde{\pi}_{n-3}: v_w \tilde{\pi}_{n-2}' \\
& \tilde{\pi}_{n-2}: \tilde{\pi}'_{n-1} v_w && \tilde{\pi}_{n-1}: v_w   && v_w: v_b \dots \tilde{\pi}_1 \tilde{\pi}_{n-1} \tilde{\pi}_{n-2}.
\end{align*}
Now let $\mathcal{M}'$ be the embedding obtained by expanding the vertices $\tilde{\pi}_i, \tilde{\pi}'_i$, $i \in \{1, \dots, (n-1)\}$ similarly as in the proof of Lemma~\ref{lem.geomupper}. If we identify the edges $ab$ and $dc$, and let $C$ be the cut loop consisting of the single closed curve $\tilde{c}$ that coincides with $dc$, we get a base embedding $\mathcal{M}_B$ of genus $(n-4)/4 + 1 = n/4$ as required by the lower bound. We can see that $(b) \implies (a)$ from Proposition~\ref{prop.cuts} holds in this particular case even with $t=0$: construct $n$ copies of $\mathcal{M}'$ and glue them symmetrically into a ``torus with handles''. The resulting surface is connected and has genus of $n(n-4)/4 + 1 = \Lgeomt(n)$. The embedding obtained by gluing the copies of $\tilde{\pi}_i'$ and $\tilde{\pi}_i$ and smoothing the vertices of degree 2 is that of $K_{n,n}$ by the same argument as in the proof of Lemma~\ref{lem.geomupper}.

For $n=4$ take $\tilde{\pi}=(1,2,3)$ and the rotations $v_w: (v_b, \tilde{\pi}_1, \tilde{\pi}_3,\tilde{\pi_2})$,  $\tilde{\pi}_1: v_w \tilde{\pi}_2'$, $\tilde{\pi}_2: \tilde{\pi}_3' v_w$, $\tilde{\pi}_3: v_w$, $\tilde{\pi}_1': v_b$, $\tilde{\pi}_2': \tilde{\pi}_2$ and $\tilde{\pi}_3': \tilde{\pi}_3$. There is also an interesting genus 1 construction with rotational symmetry group $C_{2n}=C_8$ (not colour-preserving) and genus 1, see Figure~\ref{fig.nonfix}~(b-c).

\begin{figure}
\centering
\centering
\begin{subfigure}[b]{0.3\textwidth}
\centering
\includegraphics[width=3cm]{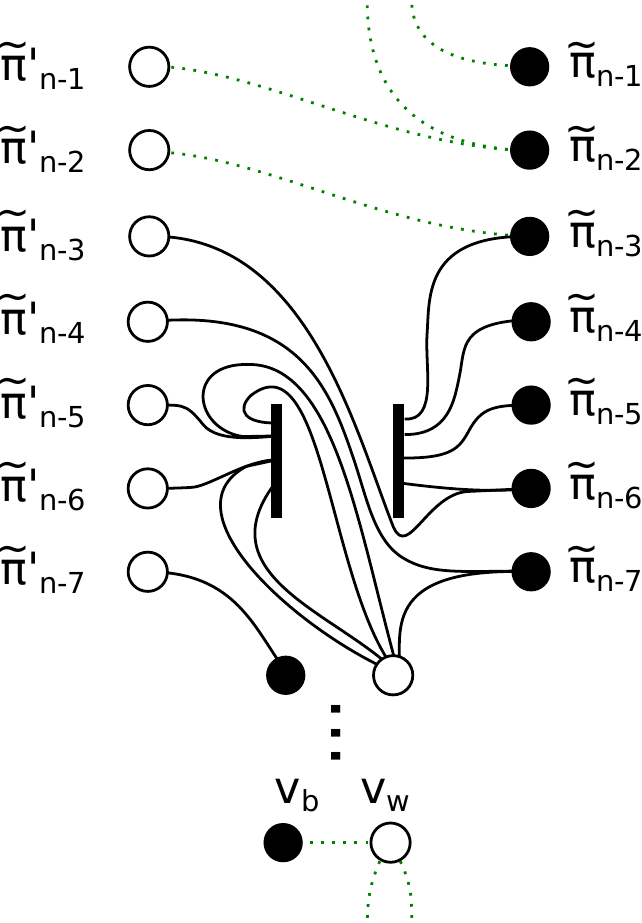}
\caption{}
\end{subfigure}
\begin{subfigure}[b]{0.3\textwidth}
\centering
\includegraphics[height=2cm]{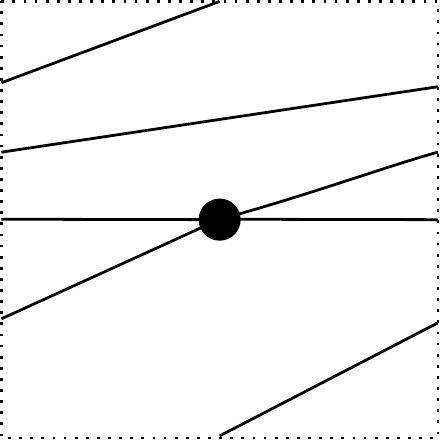}
\caption{}
\end{subfigure}
\begin{subfigure}[b]{0.3\textwidth}
\centering
\includegraphics[height=2cm]{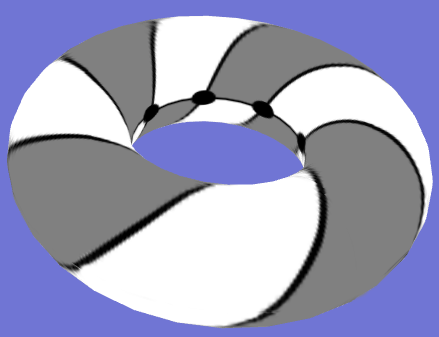}
\caption{}
\end{subfigure}
\caption{(a) The green dotted edges are added in the construction of $\mathcal{M}'$ for $n\equiv 0\,(\bmod\,4)$; (b) a genus 1 base embedding that gives a solution (c) for $n=4$ with symmetry $C_8$.}
\label{fig.nonfix}
\end{figure}

Finally, the construction for $n=3\,(\bmod\,4)$ of genus $\Lgeomt(n)$ is obtained in a similar way as the construction for $n=1\,(\bmod\,4)$, except that we do not need to add the points $\tilde{\pi}_{n-2}$ and $\tilde{\pi}_{n-2}'$.
\end{proof}
%
%\medskip
%
%The optimal ringroad constructions obtained in the proofs of Theorem~\ref{thm.geometric.opt} and Lemma~\ref{lem.geometric.opt2} have some useful properties. For example, we can construct quadrangular embeddings with $n$-fold rotational symmetry for $K_{n,k}$ with any $k \equiv 0\,(\bmod\,4)$ and $n \ge k$.

\appendix

\newpage

\section{Symmetric solutions for \texorpdfstring{$5 \le n \le 9$}{small n}}

Above we presented complete symmetric interchanges of minimum genus for $n \in \{3,4\}$ and simple procedure to construct rotationally symmetric interchanges with the minimum number of bridges for arbitrary $n \ge 5$. Below we present complete examples obtained as in the proof of Lemma~\ref{lem.geomupper} for $n$ up to $9$.  Note, that we omitted the edges $v_w^{(i)} v_b^{(i)}$ $i \in \mathbb{Z}_n$, which are easy to add and which correspond to lanes for turning around. Also we show the vertices $\pi_{k}^{(i)} = \pi_{k}'^{(i+1)}$, $k \in \{1, \dots, n\}$, see the proof of Lemma~\ref{lem.geomupper}.

\bigskip
%
%\begin{figure}[h]
%    \centering
%    \includegraphics[width=0.5\linewidth]{Keturiu_krypciu.jpg}
%    \caption{$n=4$: ``Pinavia'' interchange \cite{jbk2010}. }
%\end{figure}
%

\begin{figure}[h]
    \centering
    \includegraphics[width=0.6\linewidth]{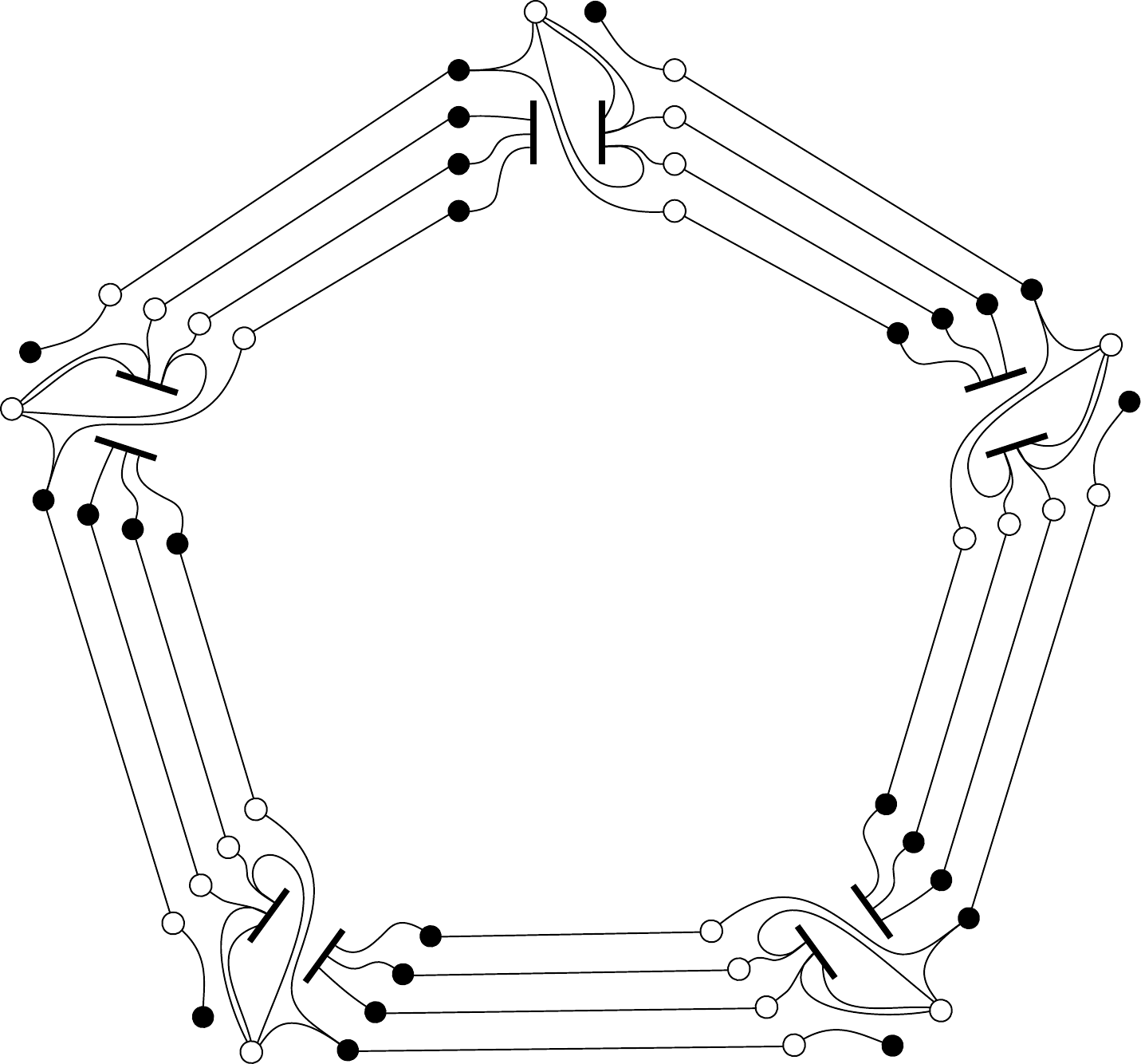}
    \caption{$n=5$.}
\end{figure}
\begin{figure}
    \centering
    \includegraphics[width=0.6\linewidth]{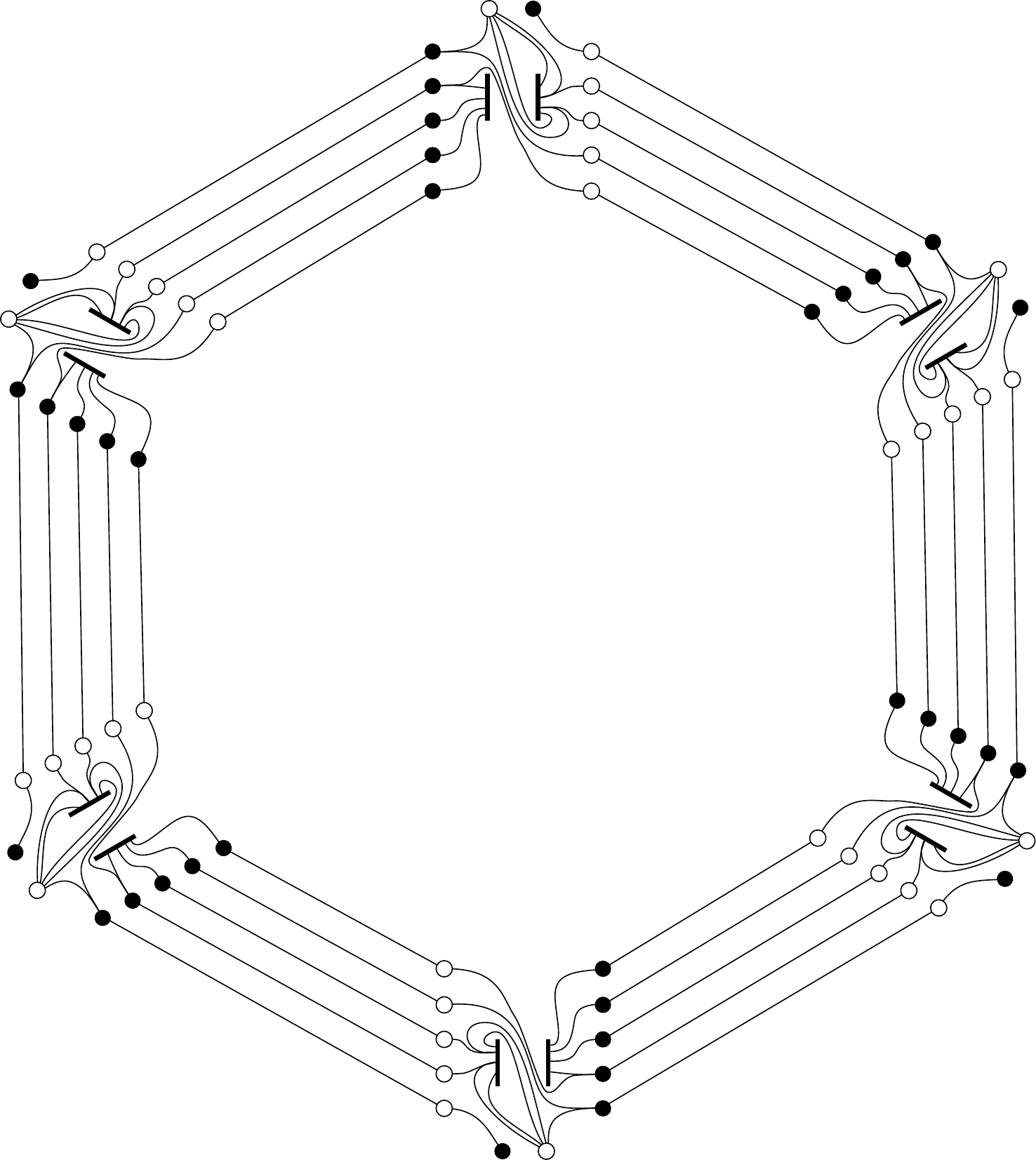}
    \caption{$n=6$.}
\end{figure}

\begin{figure}
    \centering
    \includegraphics[width=0.6\linewidth]{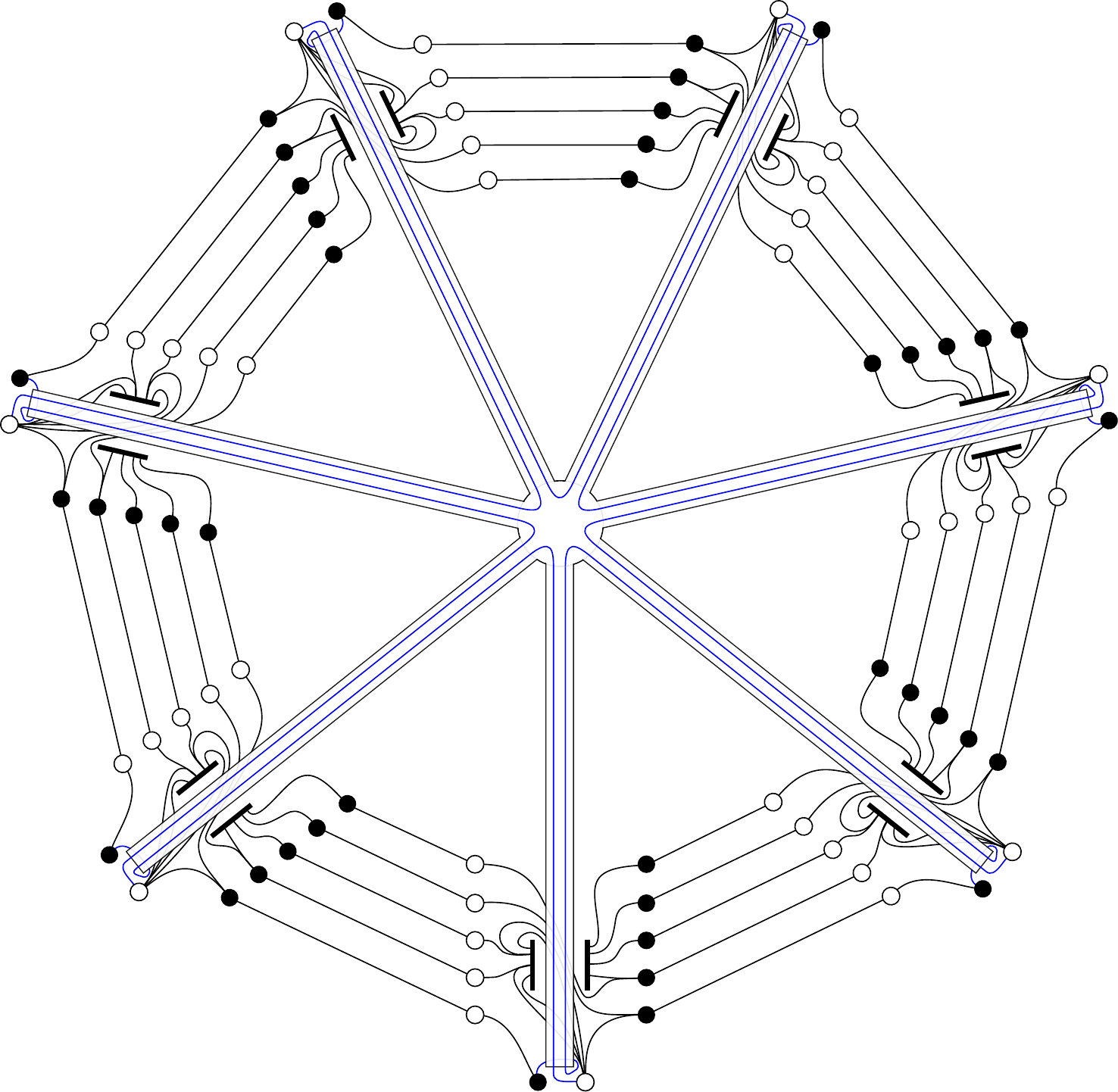}
    \caption{$n=7$. The orbit of $e_{-1}$ is shown in blue. }
\end{figure}

\begin{figure}
    \centering
    \includegraphics[width=0.6\linewidth]{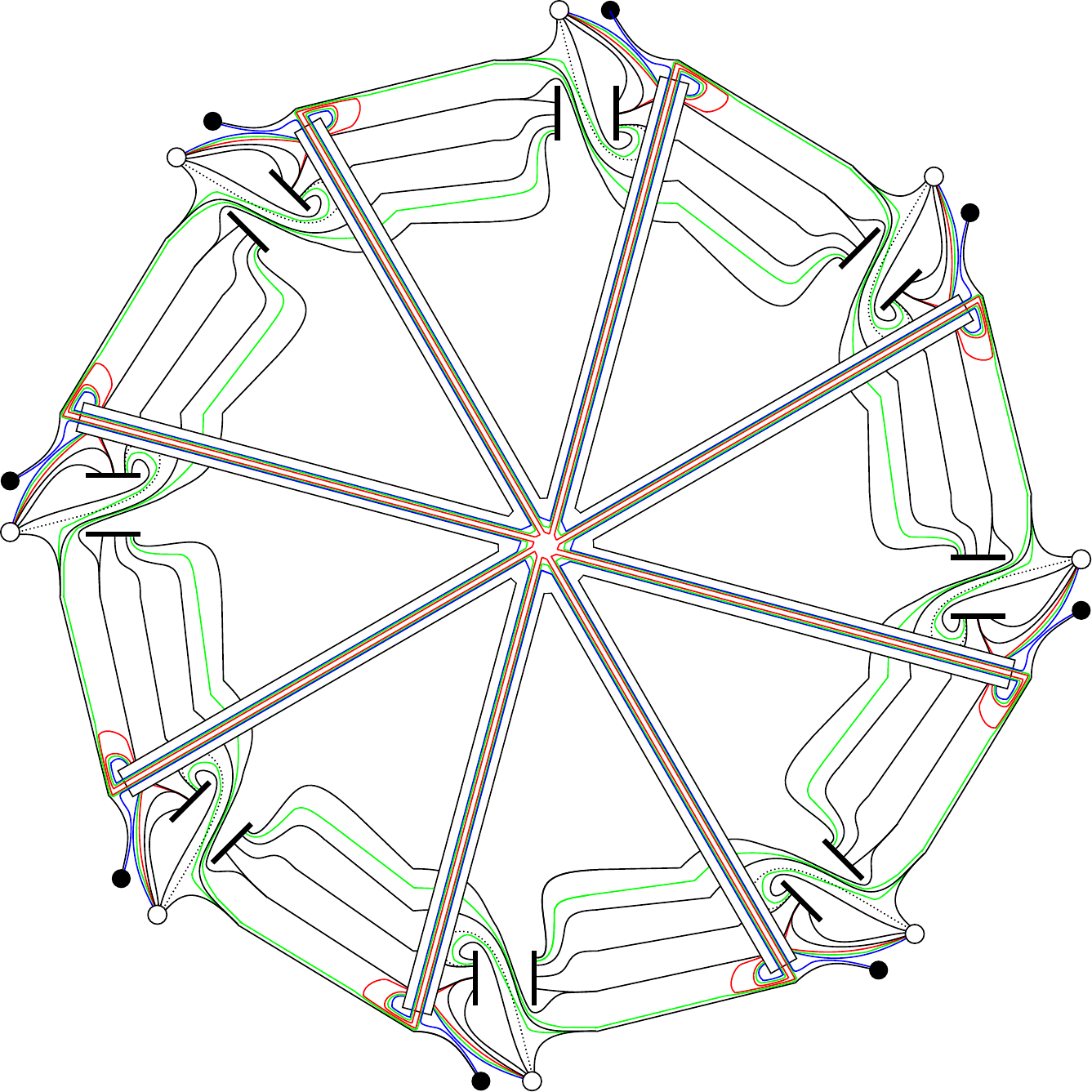}
    \caption{$n=8$. The orbits of $e_{-1}$, $e_{-2}$ and $e_2'$ are shown in blue, green and red respectively, and the orbit of the removed edge $e_2$ shown dotted.}
\end{figure}

\begin{figure}
    \centering
    \includegraphics[width=0.6\linewidth]{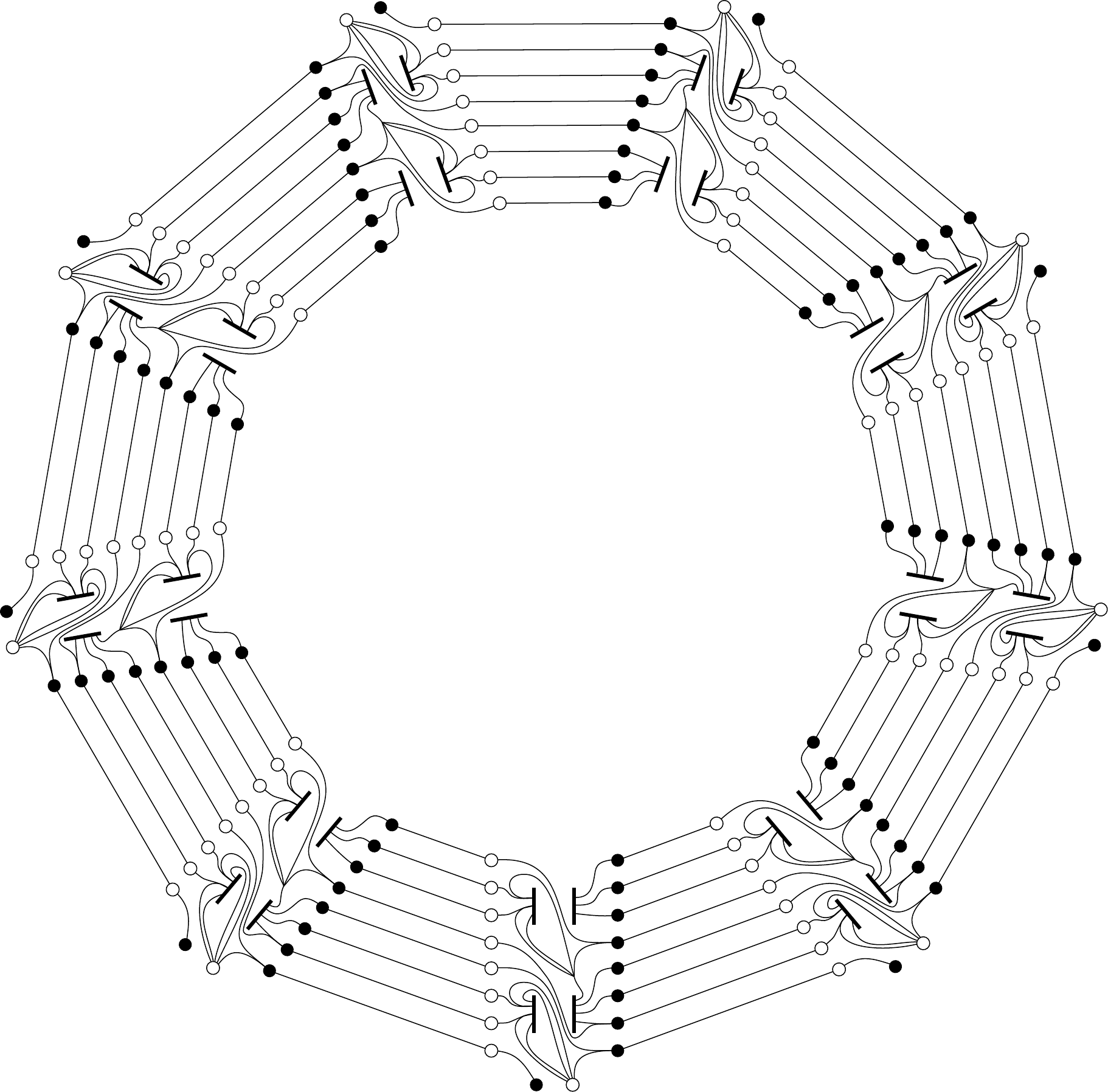}
    \caption{$n=9$.}
\end{figure}


\begin{thebibliography}{100}

\bibitem{esz2006} M. N. Ellingham, D. C. Stephens and X. Zha, The non-orientable genus of
    complete tripartite graphs, \textit{J. Combin. Theory Ser. B} \textbf{96} (2006) 529--559.

\bibitem{ellinghamstephens2009} M. N. Ellingham and D. C. Stephens, The orientable genus of some joins
    of complete graphs with large edgeless graphs, \textit{Discrete Math.} \textbf{309} (2009), 1190--1198.

\bibitem{ellingham2014} M. N. Ellingham, Construction techniques for graph embeddings, 
    Rogla Summer School, Vanderbilt University, USA (2014). 

\bibitem{esz2018} M. N. Ellingham, D. C. Stephens and X. Zha, The orientable genus of complete tripartite graphs, announced in \textit{Joint Mathematics Meetings San Diego 2018}.

\bibitem{grosstucker1987} J. Gross and T. Tucker, \textit{Topological graph theory}, John Wiley, New York (1987).


\bibitem{jones2010} G. A. Jones, Regular embeddings of complete bipartite graphs: classification and enumeration, \emph{Proceedings of the London Mathematical Society} \textbf{101} (2010), 427--453.


\bibitem{jbk2010} A. Juozapavi\v{c}ius, S. Buteliauskas  and R. Krasauskas,
        Improvement possibilities of city transportation system by using PINAVIA interchange, \textit{Environ. Res. Eng. Manag.} \textbf{3} (2010), 36--42.

\bibitem{juvanmalnicmohar1996} M. Juvan, A. Malni\v{c} and B. Mohar, Systems of curves on surfaces, \textit{J. Combin. Theory Ser. B}, \textbf{68} (1996), 7--22.

\bibitem{ksz2004} K. Kawarabayashi, D. C. Stephens and X. Zha, Orientable and nonorientable genera of some complete tripartite graphs, 
    \textit{SIAM J. Discrete Math.} 18 (2004), 479--487.

\bibitem{kurauskas2015} V. Kurauskas, On the genus of the complete tripartite graph $K_{n,n,1}$, Discrete Math, \textbf{340} (2017), 508--515.


\bibitem{undine} U. Leopold and T.  W. Tucker. Euclidean symmetry of closed surfaces immersed in 3-space, \emph{Topology and its Applications} \textbf{202} (2016), 135--150.

\bibitem{malnicnedela1995} A. Malni\v{c} and R. Nedela, k-minimal triangulations of surfaces, \textit{Acta Math. Univ. Comenianae} \textbf{64} (1995), 57--76.

\bibitem{munkres} J. R. Munkres, Topology, Prentice Hall (2000).

\bibitem{tucker2014} T. W. Tucker, Two notes on maps and surface symmetries, in R. Connelly et al. (eds.) \emph{Rigidity an Symmetry}, Fields Institute Communications \textbf{70} (2014).

\end{thebibliography}
\end{document}